\setlist[enumerate]{leftmargin=.5in}
\setlist[itemize]{leftmargin=.5in}
\newcommand{\h}[1]{\mathbf{#1}}
\crefname{hypothesis}{Hypothesis}{Hypotheses}
\newcommand{\nn}{\nonumber}
\newcommand{\tr}[1]{\textcolor{black}{#1}}
\title{Limited-angle CT reconstruction via the $L_1/L_2$ minimization\thanks{
		Submitted to the editors on May 31st, 2020. Revision submitted on September 24th, 2020.
		\funding{C.~Wang was partially supported by HKRGC Grant No.CityU11301120 and NSF CCF HDR TRIPODS grant 1934568. M.~Tao was  supported in part by \tr{the National Key Research and Development Program of China (No.
2018AAA0101100), } the Natural Science Foundation of China (No. 11971228) and  the Jiangsu Provincial National Natural Science Foundation of China (No. BK20181257). J.~Nagy was partially supported by NSF DMS-1819042 and NIH 5R01CA181171-04. Y.~Lou was partially supported by NSF grant CAREER 1846690. }}}
\author{Chao Wang\thanks{Department of Mathematical Sciences, University of Texas at Dallas, Richardson, TX 75080, USA; Department of Radiation Oncology, University of Texas Southwestern Medical Center, Dallas, TX 75390, USA (\email{chaowang.hk@gmail.com}).  }
	\and Min Tao\thanks{Department of Mathematics, National Key Laboratory for Novel Software Technology, Nanjing University, Nanjing 210093, China (\email{taom@nju.edu.cn}).}
	\and James Nagy\thanks{Department of Mathematics, Emory University, Atlanta, GA 30322, USA (\email{jnagy@emory.edu}) }
	\and Yifei Lou\thanks{Department of Mathematical Sciences, University of Texas at Dallas, Richardson, TX 75080, USA (\email{yifei.lou@utdallas.edu})}. }
\begin{document}
	
	\maketitle
	
	\begin{abstract}
		In this paper, we consider to minimize the $L_1/L_2$ term on the gradient for a limited-angle scanning problem in computed tomography (CT) reconstruction. We design a specific splitting framework for an unconstrained optimization model so that the alternating direction method of multipliers (ADMM) has guaranteed convergence under certain conditions. In addition, we incorporate a box constraint that is reasonable for imaging applications, and the convergence for the additional box constraint can also be established. Numerical results 
		on both synthetic and experimental datasets demonstrate the \tr{effectiveness and efficiency} of our proposed approach, showing significant improvements over the state-of-the-art methods in the limited-angle CT reconstruction. 
	\end{abstract}
	
	\begin{keywords}
		$L_1/L_2$,  limited-angle computed tomography, alternating direction method of multipliers, nonconvex optimization 
	\end{keywords}
	
	\begin{AMS}
		65K10, 68U10, 49N45, 49M20, 92C55
	\end{AMS}
	
	\section{Introduction}

	Recent developments in science and technology have led to a revolution in data processing, as large datasets are becoming increasingly available and useful. In medical imaging, a series of imaging modalities, such as x-ray computed tomography (CT) \cite{pric_CT,essential_ph_medimag,de2010statistical,elbakri2002statistical}, magnetic resonance imaging (MRI) \cite{sparseMRI}, and electroencephalography (EEG) \cite{liuRLHSW17,liuRQLW18}, offer different perspectives to facilitate diagnostics. On the other hand, however, one often faces ``small data,'' e.g., only a small number of  CT scans are allowed for the sake of radiation dose.
	In this paper, we are particularly interested in a limited-angle CT reconstruction problem, which often occurs in many medical imaging applications.  
In breast imaging, a technique gaining wide interests is tomosynthesis (sometimes referred to as 3D mammography) \cite{wang2011low,zhang2006comparative}, which is a limited angle tomography approach designed to produce pseudo three-dimensional images while keeping the radiation exposure to approximately that of traditional two-dimensional mammograms.

The CT data collection is a nonlinear process due to the polychromatic nature \cite{AM01,gu2015polychromatic} of the x-ray source. A common practice in CT adopts some linearization and discretization schemes that express the formation model as
	$
	f = Au,
	$
	where $f$ denotes the measurement data, $u$ is the attenuation coefficients to be recovered, and $A$ is  a projection matrix.
Specifically for this paper, we consider two types of projection geometries: 	parallel beam and fan beam, which are popular in the CT reconstruction literature. For parallel beam, the complete scanning angle is $180^\circ,$ while it is $360^\circ$ for fan beam. If we restrict the maximum scanning angle, it becomes the so-called limited-angle scanning, which is much more challenging than the CT reconstruction from the complete scanning.
	Some conventional methods in the CT reconstruction   include filtered back projection (FBP) \cite{feldkamp1984practical,tang2006three}, simultaneous iterative reconstruction technique (SIRT) \cite{AM01}, and simultaneous algebraic reconstruction technique (SART) \cite{SART,jiang2003convergence}. These approaches do not involve any regularization, and perform poorly in the case of  limited-angle and/or noisy data, resulting in severe streaking artifacts \cite{frikel2013characterization,nguyen2015strong}.

	When data is insufficient, one often requires reasonable assumptions to be imposed as a regularization term in order to reconstruct a desired solution.
	As such, the CT reconstruction can be formulated as minimizing an objective function that consists of a data fidelity term and a regularization term.  There are two commonly \tr{studied} data fitting terms  for CT reconstruction: least squares (LS) 
	 \cite{han2010algorithm,jia2010gpu,sidky2008image} and \tr{weighted least-squares (WLS)} \cite{thibault2007three}. In this paper, we focus on the LS data fitting with a discussion of \tr{WLS} in Section \ref{sect:exp_syn}.
	As for  regularizations,  the celebrated total variation (TV) \cite{gwang_limitedct,jia2010gpu,sidkyKP06,sidky2008image,carola_dirctv,yu2010soft} prefers piece-wise constant images. However, two noticeable drawbacks for TV are loss of contrast and staircasing artifacts. To resolve its limitations, Jia et al.~\cite{jia2011gpu} utilized  a  tight frame regularization and implemented the algorithm on graphics processing units (GPU) to achieve fast computation. Recently, a combination of TV and wavelet tight frame  was discussed in \cite{weigthed_limitedct}.
	The extension of TV in a nonlocal fashion by exploiting patch similarity was examined in \cite{louZOB10} for the regular CT reconstruction and in \cite{xiaoqun_limitedct} for the limited-angle case.  
	
	The TV semi-norm is equivalent to the $L_1$ norm on the gradient. It is well-known that $L_1$ is the tightest convex approximation to the $L_0$ \textit{norm}\footnote{Note that $L_0$ is not a norm, but often called this way.}, which is used to enforce sparsity for  signals of interest.
	There are several alternatives to approximate the $L_0$ norm, such as  $L_p$ with $0<p<1$ \cite{chartrand07,Xu2012}, transformed $L_1$ \cite{lv2009unified,wang2019nonconvex,zhangX17,zhangX18}, and $L_1$-$L_2$ \cite{louY18,louYHX14,louYX16,maLH17,yinEX14}.
	Algorithmically, Cand\'es et al.~\cite{candes2008enhancing} proposed an  iteratively  reweighted $L_1$ (IRL1) algorithm to solve for the $L_0$ minimization. This idea was reformulated as a scale-space algorithm in \cite{huang2018scale}. 

	Motivated by recent works  of using $L_1/L_2$ \cite{l1dl2,TaoLou2020,L1dL2_accelerated} for sparse signal recovery, we apply the $L_1/L_2$ form on the gradient, leading to a new regularization term. This regularization is rather generic in image processing, and we find it works particularly well for piece-wise constant images, owing to its scale-invariant property when approximating $L_0$. In addition, the proposed regularization can mitigate the staircasing artifacts produced by TV, as the $L_2$ norm of the gradient in the denominator should be away from zero. Extensive experiments demonstrate that our method outperforms the state-of-the-art in  CT reconstruction and significant improvements are achieved for the limited-angle case.
$L_1/L_2$ on the gradient was originally proposed in  \cite{l1dl2}, which included the MRI reconstruction as  a proof-of-concept example under the noiseless setting, but it lacks practicality and convergence analysis of the algorithm.
 The contributions of this work are three-fold:
	\begin{enumerate}
		\item We propose a novel regularization  together with a box constraint for the limited-angle CT reconstruction.
		\item We design a specific splitting scheme for solving several related models so that the convergence of ADMM can be established under certain conditions.
		\item We present extensive CT reconstruction results (using phantoms/experimental data under parallel/fan beam) to demonstrate the practicality  of the proposed approach.
	\end{enumerate}
	
	The rest of the paper is organized as follows. In \Cref{sect:prelim}, we present some preliminary materials, such as notations, TV definition, and a previous work of $L_1/L_2$ \cite{l1dl2}. We discuss the proposed models and algorithms in  \Cref{sect:model}, followed by convergence analysis in \Cref{sect:conv}. Experimental studies are conducted in \Cref{sec:experiments} using the projection data of two phantoms subject to two types of noises (Gaussian noise and Poisson noise) as well as two experimental datasets.  Finally, conclusions and future works are given in \Cref{sec:conclusions}.
	
	

	
	\section{Preliminaries}
	\label{sect:prelim}
	
	Suppose an underlying image is defined on an $m\times n$ Cartesian grid and denote  the Euclidean space $\mathbb{R}^{m\times n}$ as $X$. We adopt a linear index for the 2D image, i.e., for $u\in X$,  $u_{ij}\in \mathbb{R}$ is the $((i-1)m+j)$-th component of $u$. We define a discrete gradient operator,
	\begin{equation}\label{eq:gradient}
		\nabla u:=
	(\nabla_x u, \nabla_y u),
	\end{equation}
with $\nabla_z$ being the forward difference operator in the  $z$-direction for $z\in \{x,y\}$. Denote $Y=X\times X.$ Then $\nabla u\in Y$, and for any $\h p\in Y$, its $((i-1)m+j)$-th component  is  $p_{ij}=(p_{ij,1},p_{ij,2})$. We use a bold letter $\h p$ to indicate that it contains two elements in each component.
	With these notations, we define the inner products
	by
	\begin{equation}
	\langle x,y \rangle_X=\sum_{i,j=1}^{m,n}x_{ij}y_{ij} \quad \text{ and } \quad\langle \h p,\h q \rangle_Y=\sum_{i,j=1}^{m,n}\sum_{k=1}^2 p_{ij,k}q_{ij,k},
	\end{equation}
	as well as the corresponding norms
	\begin{equation}\label{equ:norm_general}
	\|x\|_2=\sqrt{\langle x,x\rangle_X} \quad \text{ and } \quad \|\h p\|_2 = \sqrt{\langle \h p,\h p\rangle_Y}.
	\end{equation}
	
	\medskip
	\subsection{Total variation}
	By incorporating  the TV regularization \cite{rudinOF92} into the data fitting terms, we can obtain the following two models,
	\begin{eqnarray}\label{eq:grad_con_l1}
	&&\min_{u}  {\| \nabla u \|_1} \quad \mathrm{s.t.} \ \  A u =f \\
	\label{eq:grad_uncon_l1}
	&&\min_{u}  {\| \nabla u \|_1} + \frac{\lambda}{2} \|A u -f\|_2^2,
	\end{eqnarray}
	where $\nabla$ is defined in \eqref{eq:gradient}.
	We refer \eqref{eq:grad_con_l1} as a constrained formulation, while \eqref{eq:grad_uncon_l1} as an unconstrained one. The latter is often used  when the noise is present and the parameter $\lambda>0$ in \eqref{eq:grad_uncon_l1} shall be tuned according to the noise level. Note that the TV term, $\|\nabla u\|_1$, is equivalent to the $L_1$ norm of the gradient, which can be  formulated as the \textit{anisotropic TV},
	\begin{equation}\label{equ:anisotropic}
	\|\nabla u\|_1 = \|\nabla_x u\|_1+ \|\nabla_y u\|_1,
	\end{equation}
	or the \textit{isotropic TV}, defined by
	$
	\sum_{i,j=1}^{m,n}\sqrt{(\nabla_x u)_{ij}^2+ (\nabla_y u)_{ij}^2}.
	$
	The anisotropic TV was shown to be superior  over the isotropic one for CT reconstruction \cite{gwang_limitedct}. Here, we also adopt the anisotropic TV to define the $L_1$ norm on the gradient. Besides, the difference of anisotropic and isotropic TV was proposed in  \cite{louZOX15} for general imaging applications.
	There are many efficient algorithms to minimize  \eqref{eq:grad_con_l1} or \eqref{eq:grad_uncon_l1}, including dual projection \cite{chambolle2004algorithm},  primal-dual \cite{chambolleP11}, split Bregman \cite{GO}, and  the alternating direction method of multipliers (ADMM) \cite{boydPCPE11admm}.

	\subsection{$L_1/L_2$ on the gradient}\label{sect:review_con}

	We review a model of $L_1/L_2$ on the gradient in a constrained formulation  \cite{l1dl2},
	\begin{equation}\label{eq:grad_con}
	\min_{u}  \frac{\| \nabla u \|_1}{\| \nabla u \|_2} \quad \mathrm{s.t.} \ \  A u =f,
	\end{equation}
	which is referred to as $L_1/L_2$-con. Here  $\|\cdot\|_1$ and $ \|\cdot\|_2$ are defined by \eqref{equ:anisotropic} and \eqref{equ:norm_general},  respectively.
	We  apply the ADMM framework \cite{boydPCPE11admm}  to minimize \eqref{eq:grad_con} by rewriting it into an equivalent form
	\begin{equation}\label{eq:grad_con2}
	\min_{u, \h d, \h h }  \frac{\| \h d \|_1}{\| \h h \|_2} \quad \mathrm{s.t.} \ \  A u =f, \  \h d = \nabla u,\  \h h = \nabla u,
	\end{equation}
	with  two auxiliary variables $\h d$ and  $\h h$.
	Note that we denote $\h d$ and  $\h h$ in bold to indicate that they have two components corresponding to   $x$ and $y$ derivatives.
	The augmented Lagrangian for \eqref{eq:grad_con2} is given by 
	\begin{equation}\label{eq:grad_aug_1}
	\begin{split}
	\mathcal{L}(u,\h d, \h h; w, \h b_1, \h b_2) = \frac{\|\h d\|_1 }{\| \h h\|_2} + & \langle \lambda w, f-Au\rangle +\frac{\lambda}{2}\|  Au-f\|_2^2\\
	&+ \langle \rho_1 \h b_1,\nabla u -\h d\rangle + \frac{\rho_1}{2}\| \h d - \nabla u \|_2^2 \\
	&+ \langle \rho_2 \h b_2,\nabla u -\h h\rangle + \frac{\rho_2}{2}\| \h h - \nabla u \|_2^2,
	\end{split}
	\end{equation}
	where $w, \h b_1, \h b_2$ are Lagrange multipliers (or dual variables) and $\lambda, \rho_1, \rho_2$ are positive parameters.
	The ADMM iterations proceed as follows,
	\begin{equation}\label{ADMM_l1dl2_2blocks}
	\left\{\begin{array}{l}
	u^{(k+1)} =\arg\min\limits_u \mathcal{L}(u, \h d^{(k)},\h h^{(k)};w^{(k)}, \h b_1^{(k)},\h b_2^{(k)})\\
	\h d^{(k+1)} = \arg\min\limits_{\h d} \mathcal{L}(u^{(k+1)}, \h d, \h h^{(k)};w^{(k)}, \h b_1^{(k)},\h b_2^{(k)}) \\
	\h h^{(k+1)}=\arg\min\limits_{\h h} \mathcal{L} (u^{(k+1)}, \h d^{(k+1)},\h h; w^{(k)},\h b^{(k)}_1,\h b_2^{(k)})\\
	w^{(k+1)} = w^{(k)} +f- Au^{(k+1)}\\
	\h b_1^{(k+1)} = \h b_1^{(k)} + \nabla u^{(k+1)} - \h d^{(k+1)}\\
	\h b_2^{(k+1)} = \h  b_2^{(k)} + \nabla u^{(k+1)} - \h h^{(k+1)}.
	\end{array}\right.
	\end{equation}
	For more details, please refer to \cite{l1dl2} that presented a proof-of-concept example  when $A^TA$ and $\nabla^T\nabla$ can be simultaneously diagonalizable by fast Fourier transform (FFT). In this paper, the matrix $A$ corresponds to a projection matrix, where the inverse of $\lambda A^TA+(\rho_1+\rho_2)\nabla^T\nabla$ can not be computed via FFT.
	
As the splitting scheme \eqref{eq:grad_con2} involves two-block variables of $u$ and $(\h d, \h h)$, it is hard to establish the convergence of  \eqref{ADMM_l1dl2_2blocks}. 
To prove for the convergence of ADMM, the existing literature \cite{guo2017convergence,pang2018decomposition,wang2019global} requires some associated function (e.g. objective function, merit function, and augmented Lagrangian function) to be coercive, separable, or Lipschitz differentiable (on a certain domain), neither of which holds for the $L_1/L_2$ functional.

	\section{The proposed models}
	\label{sect:model}
	\setcounter{equation}{0} Here we consider an unconstrained formulation of $L_1/L_2$ in order to deal with   noisy data. As opposed to \eqref{eq:grad_con2}, we propose a different splitting scheme, under which we can establish the ADMM convergence.   We then discuss a variant in Section~\ref{sect:box} to incorporate a box constraint, which is reasonable for the CT reconstruction problems.

	\subsection{Unconstrained formulation}\label{sect:uncon}
	The unconstrained $L_1/L_2$ formulation is given by
	\begin{equation}\label{eq:grad_uncon}
	\min_{u}  \frac{\| \nabla u \|_1}{\| \nabla u \|_2} +\frac{\lambda}{2} \|A u -f\|_2^2,
	\end{equation}
	which  is referred to as  $L_1/L_2$-uncon.

	We design a specific splitting scheme that reformulates \eqref{eq:grad_uncon} into
	\begin{equation}\label{equ:split_model_uncon}
	\min_{u, \h h}  \frac{\| \nabla u \|_1}{\| \h h \|_2}+\frac{\lambda}{2} \|A u -f\|_2^2 \quad \mathrm{s.t.} \quad \h h = \nabla u.
	\end{equation}
	The corresponding augmented Lagrangian function is expressed as
	\begin{equation}\label{eq:AL4L1/L2uncon}
	\mathcal{L}_{\rm{uncon}}(u, \h h; \h b_2) =\frac{\| \nabla u \|_1}{\| \h h \|_2}+\frac{\lambda}{2} \|A u -f\|_2^2+\langle \rho_2 \h b_2,\nabla u -\h h\rangle + \frac{\rho_2}{2}\| \h h - \nabla u \|_2^2,
	\end{equation}
	with a dual variable $\h b_2$ and a positive parameter $\rho_2.$ The ADMM framework involves the following iterations,
	\begin{equation} \label{ADMML1overL2_uncon}
	\left\{\begin{array}{l}
	\h u^{(k+1)}=\arg\min_u \mathcal{L}_{\rm{uncon}}(u, \h h^{(k)}; \h b^{(k)}_2)\\
	\h h^{(k+1)}=\arg\min_{\h h} \mathcal{L}_{\rm{uncon}}(u^{(k+1)}, \h h; \h b^{(k)}_2)\\
	\h b_2^{(k+1)} = \h  b_2^{(k)} + \nabla u^{(k+1)} - \h h^{(k+1)}.
	\end{array}\right.
	\end{equation}
Same as in \cite{l1dl2},	the $\h h$-update  has a closed-form solution given by
	\begin{equation}\label{eq:update-h}
	\h h^{(k+1)} =
	\begin{cases}
	\tau^{(k)}  \h g^{(k)} &  \mbox{if } \h g^{(k)} \neq \h 0\\
	\h e^{(k)}	& \text{otherwise},
	\end{cases}
	\end{equation}
	where $\h g^{(k)}  = \nabla u^{(k+1)} + \h b_2^{(k)},$ $\h e^{(k)}$ is a random vector with its $L_2$ norm being $\sqrt[3]{\frac{\|\nabla u^{(k+1)}\|_1}{\rho_2}},$ and $\tau^{(k)} = \frac{1}{3} + \frac{1}{3}(C^{(k)} + \frac{1}{C^{(k)}})$ with
	\begin{equation*}\label{eq:update_root}
		C^{(k)} = \sqrt[3]{\frac{27D^{(k)} + 2 + \sqrt{(27D^{(k)}+2)^2 - 4}}{2} } \quad\mbox{and}\quad D^{(k)} = \frac{\|\nabla u^{(k+1)}\|_1}{\rho_2 \|\h g^{(k)}\|_2^3}.
	\end{equation*}

	 The $u$-subproblem can be expressed  as
	\begin{equation}\label{eq:uncon_u}
	\min_{u}\displaystyle{\frac{\| \h \nabla u \|_1}{\| \h h^{(k)} \|_2}+\frac{\lambda}{2} \|A u -f\|_2^2+ \frac{\rho_2}{2}\| \h h^{(k)} - \nabla u - \h b_2^{(k)}\|_2^2}. 
	\end{equation}
With $ \h h^{(k)} $ and $\h b_2^{(k)}$ fixed,  we can apply ADMM to find the optimal solution of \eqref{eq:uncon_u}. Specifically by introducing an auxiliary variable $\h d,$ we rewrite \eqref{eq:uncon_u} as
	\begin{eqnarray}\label{xsubinner}
	\min_{u,\h d}&\displaystyle{\frac{\| \h d \|_1}{\| \h h^{(k)} \|_2}+\frac{\lambda}{2} \|A u -f\|_2^2+ \frac{\rho_2}{2}\| \h h^{(k)} - \nabla u - \h b_2^{(k)}\|_2^2}
	\quad \mathrm{s.t.} \quad  \ \h d=\nabla u.
	\end{eqnarray}
	The augmented Lagrangian corresponding to \eqref{xsubinner} is given by
	\begin{equation*}
		\begin{split}
			\mathcal{L}_{\rm{uncon}}^{(k)}(u, \h d;\h b_1) =&\frac{\|\h  d \|_1}{\| \h h^{(k)} \|_2}+\frac{\lambda}{2}\| Au-f\|_2^2+ \frac{\rho_2}{2}\| \h h^{(k)} - \nabla u - \h b_2^{(k)}\|_2^2\\
			& + \langle \rho_1 \h b_1,\nabla u -\h d\rangle + \frac{\rho_1}{2}\| \h d - \nabla u \|_2^2,
		\end{split}
	\end{equation*}
	where $ \h b_1$ is a dual variable and $\lambda, \rho_1$ are positive parameters. Here we have $k$ in the superscript of $\mathcal L_{\rm{uncon}}$ to indicate that it is the Lagrangian for the $u$-subproblem in \eqref{ADMML1overL2_uncon} at the $k$-th iteration.
	The ADMM framework to minimize \eqref{xsubinner} leads to the following iterations,
	\begin{equation}\label{ADMM_l1_argmin_con}
	\left\{\begin{array}{l}
	u_{j+1} =\arg\min_u \mathcal{L}_{\rm{uncon}}^{(k)}(u, \h d_j;(\h b_1)_j)\\
	\h d_{j+1} = \arg\min_{\h d} \mathcal{L}_{\rm{uncon}}^{(k)}(u_{j+1}, \h d;(\h b_1)_j) \\
	(\h b_1)_{j+1} = (\h b_1)_{j} + \nabla u_{j+1} - \h d_{j+1},
	\end{array}\right.
	\end{equation}
	where the subscript $j$ represents the inner loop index, as opposed to
	the superscript $k$ for outer iterations  in \eqref{ADMML1overL2_uncon}.  Note that $ \mathcal{L}_{\rm{uncon}}^{(k)}(u, \h d; \h b_1)$ resembles the augmented Lagrangian
	$\mathcal{L}(u,\h d, \h h^{(k)}; w, \h b_1, \h b_2^{(k)})$ with $w=0$ defined in  \eqref{eq:grad_aug_1}, and hence \eqref{ADMML1overL2_uncon} with one iteration of \eqref{ADMM_l1_argmin_con} for the $u$-subproblem is equivalent to  the previous approach \cite{l1dl2}. If we can reach to the optimal solution of  the $u$-subproblem, the convergence can be guaranteed; see Section~\ref{sect:conv}.

	We then elaborate on how to solve the two subproblems in \eqref{ADMM_l1_argmin_con}.
	By taking the derivative of $\mathcal{L}_{\rm{uncon}}^{(k)}$ with respect to $u$, we obtain a closed-form solution, 
	\begin{equation}\label{ADMM_l1con_u}
	u_{j+1} = \Big(\lambda A^TA-(\rho_1+\rho_2)\triangle\Big)^{-1}\Big(\lambda A^T f  + \rho_1 \nabla^T(\h d_j -(\h b_1)_{j})  +\rho_2\nabla^T(\h h^{(k)}-\h b_2^{(k)})\Big),
	\end{equation}
where $\triangle =-\nabla^T\nabla$ denotes the Laplacian operator.
	For a general system matrix $A$ that can not be diagonalized by Fourier transform, we adopt the conjugate gradient (CG) descent iterations \cite{Optimization_2006book_Nocedal} to solve for \eqref{ADMM_l1con_u}. The $\h d$-subproblem in \eqref{ADMM_l1_argmin_con} has a closed-form solution, i.e.,
	\begin{equation}\label{ADMM_l1con_d}
	\h d_{j+1} = \mathbf{ shrink}\left(\nabla u_{j+1} + (\h b_1)_{j}, \frac{1}{\rho_1 \|\h h^{(k)} \|_2}\right),
	\end{equation}
	where  $\mathbf{shrink}(\h v, \mu) = \mathrm{sign}(\h v)\max\left\{|\h v|-\mu, 0\right\}.$

	We summarize  in \Cref{alg:l1dl2_unconst}  for minimizing the $L_1/L_2$-uncon model \eqref{eq:grad_uncon}. Admittedly, \Cref{alg:l1dl2_unconst} involves 3 levels of iterations:  outer/inner ADMM and CG for solving \eqref{ADMM_l1con_u}, which is not computationally appealing. An alternative is the linearized ADMM \cite{nien2014fast} so as to avoid the CG iterations, which will be explored in the future.

	\begin{algorithm}[t]
		\caption{The $L_1/L_2$ unconstrained minimization ($L_1/L_2$-uncon). }
		\label{alg:l1dl2_unconst}
		\begin{algorithmic}[1]
			\STATE{Input: projection matrix $A$  and observed data $f$ }
			\STATE{Parameters: $\rho_1, \rho_2, \lambda,\bar \epsilon \in \mathbb{R}^+$, and kMax, jMax$\in\mathbb Z^+$}
			\STATE{Initialize: $ \h h, \h b_1, \h b_2, \h d$,   and $k,j = 0$}
			
			\WHILE{$k < $ kMax or $|u^{(k)}-u^{(k-1)}|/|u^{(k)}| > \bar\epsilon$}
			\WHILE{$j < $ jMax or $|u_{j}-u_{j-1}|/|u_{j}| > \bar\epsilon$}
			\STATE{
				%
				$u_{j+1} = (\lambda A^TA-(\rho_1+\rho_2)\triangle)^{-1}(\lambda A^T f + \rho_1 \nabla^T(\h d_j -(\h b_1)_{j})  $  \\ $ \quad \quad \quad  +\rho_2\nabla^T(\h h^{(k)}-\h b_2^{(k)}))$}
			\STATE{$\h d_{j+1} = \mathbf{ shrink}\left(\nabla u_{j+1} + (\h b_1)_{j}, \frac{1}{\rho_1 \|\h h^{(k)} \|_2}\right) $}
			\STATE{$(\h b_1)_{j+1} = (\h b_1)_{j}+ \nabla u_{j+1} - \h d_{j+1} $}
			\STATE{$j = j + 1$}
			\ENDWHILE
			\RETURN $u^{(k+1)} = u_j$
			\STATE{$\h h^{(k+1)} =
				\begin{cases}
				\tau^{(k)} \left( \nabla u^{(k+1)} + \h b_2^{(k)} \right) & \nabla u^{(k+1)} + \h b_2^{(k)} \neq 0\\
				\h e^{(k)}	& \nabla u^{(k+1)} + \h b_2^{(k)} = 0
				\end{cases} $}
			\STATE $\h b_2^{(k+1)} = \h  b_2^{(k)} + \nabla u^{(k+1)} - \h h^{(k+1)} $
			\STATE{$k = k+1$ and $j = 0$}
			\ENDWHILE
			\RETURN $\h u^\ast = \h u^{(k)}$ \end{algorithmic} 
	\end{algorithm}

	\subsection{Box constraint}\label{sect:box}
	It is reasonable to incorporate a box constraint for image processing applications \cite{chan2012multiplicative,kk_bound_const}, since pixel values are usually bounded by $[0,1]$ or $[0, 255]$. Specifically for CT, the pixel value has physical meanings and hence the bound can often be estimated in advance \cite{pric_CT,essential_ph_medimag}.  
	The box constraint is particularly helpful  for the $L_1/L_2$ model to prevent its divergence \cite{L1dL2_accelerated}. We add a general box constraint $u\in [c,d]$ to \eqref{eq:grad_uncon}, thus leading to
	\begin{eqnarray}\label{eq:grad_con_l1l2_box}
	&&\min_{u}  \frac{\| \nabla u \|_1}{\| \nabla u \|_2}+ \frac{\lambda}{2}\| A u -f\|_2^2 \quad \mathrm{s.t.} \quad  u\in [c,d], \label{eq:grad_uncon_l1l2_box}
	\end{eqnarray}
	referred to as $L_1/L_2$-box.
	To derive an algorithm for solving the $L_1/L_2$-box model, we rewrite \eqref{eq:grad_con_l1l2_box} equivalently as
	\begin{equation}\label{equ:split_model_box}
	\min_{u,\h h}  \frac{\| \nabla u \|_1}{\| \h h \|_2} +\frac{\lambda}{2}\| A u -f\|_2^2 + \Pi_{[c,d]}(u) \quad \mathrm{s.t.} \quad  \h h = \nabla u,
	\end{equation}
	where $\Pi_{S}(t)$ is an indicator function enforcing $t$ into the feasible set $S$, i.e.,
	\begin{equation}\label{eq:indicator}
	\Pi_S(t) =
	\begin{cases}
	0	&	 \mbox{if } t \in S
	\\
	+\infty	&	\text{otherwise}.
	\end{cases}
	\end{equation}
	
	 The augmented Lagrangian function for \eqref{equ:split_model_box} can be expressed as
	 \begin{equation}\label{eq:Lboxvsuncon}
	     \mathcal{L}_{\rm box}(u,\h h; \h b_2) =  \mathcal{L}_{\rm uncon}(u,\h h; \h b_2)+\Pi_{[c,d]}(u).
	 \end{equation}
	  By using ADMM, 
	 we have the same update rules for $\h h$ and $\h b_2$ as in \eqref{ADMML1overL2_uncon}, while 
	 the $u$-subproblem is given by
	\begin{equation}\label{eq:u-update4box}
	u^{(k+1)} = \arg\min_u \frac{\| \nabla u \|_1}{\| \h h^{(k)} \|_2} +\frac{\lambda}{2}\| A u -f\|_2^2+   \frac{\rho_2}{2}\| \h h^{(k)} - \nabla u -\h b_2^{(k)}\|_2^2+\Pi_{[c,d]}(u).
	\end{equation}
	We introduce two variables, $\h d$ for the gradient and $v$ for the box constraint, thus  getting
	\begin{equation}\label{eq:u-update4box-split}
	\min_{u,\h d,v} \frac{\|\h d \|_1}{\| \h h^{(k)} \|_2} +\frac{\lambda}{2}\| A u -f\|_2^2+   \frac{\rho_2}{2}\| \h h^{(k)} - \nabla u -\h b_2^{(k)}\|_2^2 +\Pi_{[c,d]}(v) \quad \mathrm{s.t.} \quad  \h d = \nabla u, u = v.
	\end{equation}
	The  augmented Lagrangian corresponding to \eqref{eq:u-update4box-split} becomes
	\begin{equation}\label{eq:grad_aug_l1_box}
	\begin{split}
	\mathcal{L}_{\rm{box}}^{(k)}(u,\h d, v; \h b_1, e)=& \frac{\|\h d\|_1}{\|\h h^{(k)}\|_2}  +\frac{\rho_2}{2} \|\nabla u-\h h^{(k)} +\h b_2^{(k)} \|_2^2  + \Pi_{[c, d]}(v)  +\frac{\lambda}{2}\|Au - f\|_2^2\\
	& + \langle \rho_1 \h b_1,\nabla u -\h d\rangle + \frac{\rho_1}{2}\| \h d - \nabla u \|_2^2	 + \langle \beta e, u-v\rangle + \frac \beta 2 \|v-u\|_2^2,
	\end{split}
	\end{equation}
	where $\h b_1, e$ are dual variables and $\lambda, \rho_1, \beta $ are positive parameters.
	Similar to \eqref{ADMM_l1_argmin_con}, there is a closed-form solution of the $u$-subproblem,
	
\begin{equation}\label{eq:u_con_sub_box}
	\begin{split}
 	u_{j+1} &= \Big(\lambda A^TA+ (\rho_1+\rho_2) \triangle+\beta I\Big)^{-1}\Big(\lambda A^Tf +\rho_1 \nabla^T(\h d_j -(\h b_1)_{j}) \\
	& \quad + \rho_2 \nabla^T(\h h^{(k)} -\h b_2^{(k)})   + \beta(v^{(k)}-e^{(k)})\Big),
	\end{split}
	\end{equation}
	
	The update for $\h d$ is the same as \eqref{ADMM_l1con_d}, and we update $v$ by projecting it onto  $[c,d],$ i.e.,
$
		v_{j+1}=\min\left\{\max\{u_{j+1}+e_{j}, c\}, d\right\}.
$	
	%
	The pseudo-code with the additional box constraint is summarized in \Cref{alg:l1dl2_box}.
	
	\begin{algorithm}[t]
		\caption{The $L_1/L_2$ minimization with a box constraint ($L_1/L_2$-box). }
		\label{alg:l1dl2_box}
		\begin{algorithmic}[1]
			\STATE{Input: projection matrix $A$, observed data $f$,  and a bound  $[c,d]$ for the original image}
			\STATE{Parameters: $\rho_1, \rho_2, \lambda,\beta,\bar\epsilon \in \mathbb{R}^+$, and kMax, jMax $\in\mathbb Z^+$}
			\STATE{Initialize: $ \h h, \h b_1, \h b_2, \h d, w=0, e$,   and $k,j = 0$}
			
			\WHILE{$k < $ kMax or $|u^{(k)}-u^{(k-1)}|/|u^{(k)}| > \bar\epsilon$}
			
			\WHILE{$j < $ jMax or $|u_{j}-u_{j-1}|/|u_{j}| > \bar\epsilon$}
			\STATE{$u_{j+1} = (\lambda A^TA-(\rho_1+\rho_2)\triangle + \beta I)^{-1}(\lambda A^T f + \rho_1 \nabla^T(\h d_j -(\h b_1)_{j})  $  \\ $ \quad \quad \quad  +\rho_2\nabla^T(\h h^{(k)}-\h b_2^{(k)})+\beta (v^{(k)}-e^{(k)}))$
			}
			\STATE{$\h d_{j+1} = \mathbf{ shrink}\left(\nabla u_{j+1} + (\h b_1)_{j}, \frac{1}{\rho_1 \|\h h^{(k)} \|_2}\right) $}
			\STATE{$v_{j+1}=\min\left\{\max\{u_{j+1}+e_{j}, c\}, d\right\} $ }
			\STATE{$(\h b_1)_{j+1} = (\h b_1)_{j}+ \nabla u_{j+1} - \h d_{j+1} $}
			
			\STATE{$e_{j+1} = e_{j} + u_{j+1} - v_{j+1}$}
			\STATE{$j = j + 1$}
			\ENDWHILE
			\RETURN $u^{(k+1)} = u_j$
			\STATE{$\h h^{(k+1)} =
				\begin{cases}
				\tau^{(k)} \left( \nabla u^{(k+1)} + \h b_2^{(k)} \right) & \nabla u^{(k+1)} + \h b_2^{(k)} \neq 0\\
				\h e^{(k)}	& \nabla u^{(k+1)} + \h b_2^{(k)} = 0
				\end{cases} $}
			\STATE $\h b_2^{(k+1)} = \h  b_2^{(k)} + \nabla u^{(k+1)} - \h h^{(k+1)} $
			\STATE{$k = k+1$ and $j = 0$}
			\ENDWHILE
			\RETURN $\h u^\ast = \h u^{(k)}$ \end{algorithmic} 
	\end{algorithm}
	
	\section{Convergence analysis}\label{sect:conv}

	We intend to establish the convergence of  Algorithms~\ref{alg:l1dl2_unconst}-\ref{alg:l1dl2_box}. We
	observe that the ADMM framework for both models share the same structure
	\begin{equation} \label{ADMML1overL2all}
	\left\{\begin{array}{l}
	u^{(k+1)}=\arg\min_u \mathcal{L}(u, \h h^{(k)};\h b^{(k)}_2)\\
	\h h^{(k+1)}=\arg\min_{\h h} \mathcal{L}(u^{(k+1)}, \h h;\h b^{(k)}_2)\\
	\h b_2^{(k+1)} = \h  b_2^{(k)} + \nabla u^{(k+1)} - \h h^{(k+1)},
	\end{array}\right.
	\end{equation}
	where $\mathcal L$ is either  $\mathcal{L}_{\rm{uncon}}$ or $\mathcal{L}_{\rm{box}}$. 
	We show the sequence generated by ADMM for  $L_1/L_2$-uncon either diverges due to unboundedness or has a convergent subsequence, while the sequence for $L_1/L_2$-box always has a convergent subsequence. 
	For this purpose, we introduce \Cref{lem43} for an upper bound of $\|\h b_2^{(k+1)}-\h b_2^{(k)}\|_2$ in terms of $\|u^{(k+1)}-u^{(k)}\|_2$ and $\|\h h^{(k+1)}-\h h^{(k)}\|_2$. 
	\Cref{lem:suff_decr} and \Cref{thm:stantionary_con} are standard in convergence analysis \cite{hong2016convergence,li2015global,wang2018convergences,wang2019global} to guarantee that  the augmented Lagrangian decreases sufficiently and the subgradient at each iteration is bounded by successive errors, respectively. The lemmas require the following three assumptions,
	\begin{enumerate}
		\item[A1]:  ${\cal N}(\nabla)\bigcap{\cal N}(A)=\{0\}$, 
		where $\cal N$ denotes the null space and $\nabla$ is defined in \eqref{eq:gradient}. 
		\item[A2]: The sequence $\{ u^{(k)}\}$ generated by \eqref{ADMML1overL2all} is bounded, then so is $\{\nabla u^{(k)}\}$ and
		  we denote $M=\sup_{k}\{\|\nabla u^{(k)}\|_1\}$. 
		\item[A3]:  The norm of $\{ \h h^{(k)}\}$ generated by \eqref{ADMML1overL2all} has a  lower bound, i.e., there exists a positive constant $\epsilon$ such that $\|\h h^{(k)}\|_2\geq \epsilon, \ \forall k$. 
	\end{enumerate}
	\begin{remark}
		The Assumption	A1 is standard in image processing \cite{chan2005image,louZOX15}. The Assumption A2 requires the boundedness of $\{u^{(k)}\},$ and hence the convergence results can be interpreted as the sequence either diverges (due to unboundedness) or converges to a critical point. 
		To make the $L_1/L_2$ regularization well-defined,
		we shall have  $\|\h h\|_2>0$. Certainly, $\|\h h\|_2>0$ does not imply a uniform lower bound of $\epsilon$, but we can redefine the divergence of an algorithm by including the case of $\|\h h^{(k)}\|_2< \epsilon$, which can be checked numerically with a pre-set value of $\epsilon$.
	\end{remark}
	
	Please refer to Appendix for the proofs of these lemmas, based on which we can establish the convergence in \Cref{theo2,theo:box}  for \Cref{alg:l1dl2_unconst,alg:l1dl2_box}, respectively. Furthermore, \Cref{theo2inr,theo3inr} extend the convergence analysis to the case when the $u$-subproblem in \eqref{ADMML1overL2all} can be solved inexactly.

	\begin{lemma}\label{lem43} Under the Assumptions A1 and A2, the sequence $\{u^{(k)},\h h^{(k)},\h b_2^{(k)}\}$  generated by \eqref{ADMML1overL2all}
		satisfies
		\begin{equation}\label{ineq:lemma}
		\left\|\h b_2^{(k+1)}-\h b_2^{(k)}\right\|_2^2\le\left(\frac{32 mn}{\rho_2^2\epsilon^{4}}\right)\left\|u^{(k+1)}-u^{(k)}\right\|_2^2+ \left(\frac{8M^2}{\rho_2^2\epsilon^6}\right)\left\|\h h^{(k+1)}-\h h^{(k)}\right\|_2^2.
		\end{equation}
	\end{lemma}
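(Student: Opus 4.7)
The plan is to exploit the smoothness of the $\h h$-subproblem's augmented Lagrangian (which fails to be smooth only at $\h h = 0$, excluded by A3) to derive a closed-form identity for $\h b_2^{(k)}$, and then bound $\h b_2^{(k+1)} - \h b_2^{(k)}$ by a telescoping split.

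\textbf{Step 1 (closed form for the dual variable).} First I would differentiate $\mathcal L(u^{(k+1)},\,\cdot\,;\h b_2^{(k)})$ at the minimizer $\h h^{(k+1)}$, obtaining the stationarity condition
\begin{equation*}
-\frac{\|\nabla u^{(k+1)}\|_1}{\|\h h^{(k+1)}\|_2^{3}}\h h^{(k+1)} - \rho_2 \h b_2^{(k)} + \rho_2(\h h^{(k+1)} - \nabla u^{(k+1)}) = 0.
\end{equation*}
Substituting the multiplier update $\nabla u^{(k+1)} - \h h^{(k+1)} = \h b_2^{(k+1)} - \h b_2^{(k)}$, the terms linear in $\rho_2$ cancel and produce the clean identity $\rho_2 \h b_2^{(k+1)} = -\|\nabla u^{(k+1)}\|_1 \h h^{(k+1)}/\|\h h^{(k+1)}\|_2^{3}$, valid for every $k$. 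This formula is the engine of the proof.

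\textbf{Step 2 (telescoping split).} Writing the same identity at indices $k$ and $k+1$, subtracting, and adding/subtracting the mixed term $\|\nabla u^{(k)}\|_1 \h h^{(k+1)}/\|\h h^{(k+1)}\|_2^3$, I would decompose $\rho_2(\h b_2^{(k+1)} - \h b_2^{(k)})$ into two summands: one proportional to the scalar difference $\|\nabla u^{(k+1)}\|_1 - \|\nabla u^{(k)}\|_1$, and one proportional to the vector difference $\phi(\h h^{(k)}) - \phi(\h h^{(k+1)})$, where $\phi(\h h) := \h h/\|\h h\|_2^{3}$. Taking norms, applying the triangle inequality, and invoking $(a+b)^2 \le 2a^2+2b^2$ reduces the lemma to two independent bounds.

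\textbf{Step 3 (bound the two pieces).} For the scalar-difference summand, the reverse triangle inequality on the anisotropic TV bounds the numerator by $\|\nabla(u^{(k+1)} - u^{(k)})\|_1$; combining the norm equivalence $\|\cdot\|_1 \le \sqrt{2mn}\,\|\cdot\|_2$ on $Y$ with the standard operator-norm bound $\|\nabla\|_{op} \le 2\sqrt 2$ for 2D forward differences gives the factor $4\sqrt{mn}\,\|u^{(k+1)} - u^{(k)}\|_2$. Dividing by $\|\h h^{(k+1)}\|_2^{2} \ge \epsilon^{2}$ and $\rho_2$, then squaring and doubling, delivers exactly the coefficient $32mn/(\rho_2^{2}\epsilon^{4})$. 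For the vector-difference summand, assumption A2 gives $\|\nabla u^{(k)}\|_1 \le M$, so the task reduces to the Lipschitz estimate $\|\phi(\h h^{(k+1)}) - \phi(\h h^{(k)})\|_2 \le (2/\epsilon^{3})\,\|\h h^{(k+1)} - \h h^{(k)}\|_2$. A direct differentiation yields $D\phi(\h h) = \|\h h\|_2^{-3} I - 3\|\h h\|_2^{-5}\h h\h h^{T}$ with spectral norm $2\|\h h\|_2^{-3}$, so integrating along the segment between $\h h^{(k)}$ and $\h h^{(k+1)}$ produces the desired bound. Squaring, doubling, and using $\|\nabla u^{(k)}\|_1 \le M$ recovers the coefficient $8M^{2}/(\rho_2^{2}\epsilon^{6})$.

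\textbf{Main obstacle.} The technical subtlety lies in the Lipschitz estimate for $\phi$: assumption A3 only guarantees $\|\h h^{(k)}\|_2, \|\h h^{(k+1)}\|_2 \ge \epsilon$, but the straight segment joining them could in principle dip below $\epsilon$ if the two iterates are nearly antipodal, invalidating the naive mean-value bound. I would address this either (i) by the purely pointwise algebraic identity
\begin{equation*}
\phi(b) - \phi(a) \;=\; \frac{b-a}{\|b\|^{3}} \;+\; \frac{a\bigl(\|a\|^{3} - \|b\|^{3}\bigr)}{\|a\|^{3}\|b\|^{3}},
\end{equation*}
which, after bounding $|\|a\|^{3} - \|b\|^{3}| \le \|a-b\|(\|a\|^{2} + \|a\|\|b\| + \|b\|^{2})$, retains only endpoint norms and therefore uses A3 directly (possibly at the cost of a slightly larger absolute constant), or (ii) by absorbing any mild slack into the stated constants, since the overall structure of the estimate is what matters for the subsequent sufficient-decrease and subgradient-bound lemmas.
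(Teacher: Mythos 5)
Your proposal is correct and follows essentially the same route as the paper: the optimality condition of the $\h h$-subproblem plus the dual update give $\rho_2\h b_2^{(k+1)}=-\|\nabla u^{(k+1)}\|_1\,\h h^{(k+1)}/\|\h h^{(k+1)}\|_2^3$, the difference is split into a $\big|\|\nabla u^{(k+1)}\|_1-\|\nabla u^{(k)}\|_1\big|$ piece (bounded by $4\sqrt{mn}\,\|u^{(k+1)}-u^{(k)}\|_2$ via $\|\cdot\|_1\le\sqrt{2mn}\|\cdot\|_2$ and $\|\nabla\|_2\le 2\sqrt2$) and a piece controlled by the Lipschitz constant $2/\epsilon^3$ of $\h h\mapsto\h h/\|\h h\|_2^3$ on $\{\|\h h\|_2\ge\epsilon\}$, and the two are combined with $(a+b)^2\le 2a^2+2b^2$ to get exactly the stated coefficients. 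The obstacle you flag is real — the paper's Lemma A.3 bounds the Hessian on the nonconvex set $\mathcal M_\epsilon$ and tacitly applies a mean-value argument along a segment that may leave that set — but the constant $2/\epsilon^3$ does in fact hold with only endpoint information (expand $\|\phi(a)-\phi(b)\|_2^2$ in terms of $\|a\|,\|b\|,\langle a,b\rangle$ and check the worst case at $\cos\theta=1$), so your repair via the pointwise identity is sound and the slight constant loss it incurs is avoidable.
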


	%
	
	\begin{lemma}{(sufficient descent)}\label{lem:suff_decr}
		Under the Assumptions A1-A3 and a sufficiently large $\rho_2$,
		the sequence $\{u^{(k)},\h h^{(k)},\h b_2^{(k)}\}$  generated by 
		\eqref{ADMML1overL2all}
		satisfies 
		\begin{equation}\label{ineq:lem_suff_decr}
		\mathcal{L}(u^{(k+1)}, \h h^{(k+1)};\h b_2^{(k+1)})\le \mathcal{L}(u^{(k)}, \h h^{(k)}; \h b_2^{(k)})-c_1\|u^{(k+1)}-u^{(k)}\|_2^2-c_2\|\h h^{(k+1)} - \h h^{(k)}\|_2^2,
		\end{equation}
		where  $c_1$ and $c_2$ are two positive constants.
	\end{lemma}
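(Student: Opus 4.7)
The plan is to telescope the one-step change in $\mathcal L$ across the three block updates and bound each piece separately. Writing
\begin{align*}
\mathcal{L}&(u^{(k+1)},\h h^{(k+1)};\h b_2^{(k+1)})-\mathcal{L}(u^{(k)},\h h^{(k)};\h b_2^{(k)})\\
&=\bigl[\mathcal{L}(u^{(k+1)},\h h^{(k+1)};\h b_2^{(k+1)})-\mathcal{L}(u^{(k+1)},\h h^{(k+1)};\h b_2^{(k)})\bigr]\\
&\quad+\bigl[\mathcal{L}(u^{(k+1)},\h h^{(k+1)};\h b_2^{(k)})-\mathcal{L}(u^{(k+1)},\h h^{(k)};\h b_2^{(k)})\bigr]\\
&\quad+\bigl[\mathcal{L}(u^{(k+1)},\h h^{(k)};\h b_2^{(k)})-\mathcal{L}(u^{(k)},\h h^{(k)};\h b_2^{(k)})\bigr],
\end{align*}
the first bracket (dual ascent) is an error term that I will control via \Cref{lem43}, while the other two brackets have to supply strict quadratic descent in $u^{(k+1)}-u^{(k)}$ and $\h h^{(k+1)}-\h h^{(k)}$, respectively, via strong convexity of the corresponding subproblems.

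For the $u$-descent, with $\h h^{(k)}$ and $\h b_2^{(k)}$ frozen the map $u\mapsto\mathcal{L}(u,\h h^{(k)};\h b_2^{(k)})$ is the sum of the convex term $\|\nabla u\|_1/\|\h h^{(k)}\|_2$, the quadratic $\frac{\lambda}{2}\|Au-f\|_2^2+\frac{\rho_2}{2}\|\nabla u-\h h^{(k)}+\h b_2^{(k)}\|_2^2$, and, in the box case, the convex indicator $\Pi_{[c,d]}(u)$. Under Assumption A1, $\lambda A^TA+\rho_2\nabla^T\nabla$ is positive definite with smallest eigenvalue $\sigma:=\sigma_{\min}(\lambda A^TA+\rho_2\nabla^T\nabla)>0$, so the function is $\sigma$-strongly convex; optimality of $u^{(k+1)}$ then gives a descent of at least $\frac{\sigma}{2}\|u^{(k+1)}-u^{(k)}\|_2^2$. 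For the $\h h$-descent, the only obstacle to strong convexity in $\h h$ is the ratio $\|\nabla u^{(k+1)}\|_1/\|\h h\|_2$; computing
\begin{equation*}
\nabla_{\h h}^2\!\left(\frac{\|\nabla u^{(k+1)}\|_1}{\|\h h\|_2}\right)=\|\nabla u^{(k+1)}\|_1\!\left(-\frac{I}{\|\h h\|_2^3}+\frac{3\,\h h\h h^\top}{\|\h h\|_2^5}\right)
\end{equation*}
and combining with A2 and A3 shows this term is $(M/\epsilon^3)$-weakly convex on $\{\|\h h\|_2\ge\epsilon\}$, so the quadratic $\frac{\rho_2}{2}\|\h h-\nabla u^{(k+1)}-\h b_2^{(k)}\|_2^2$ makes $\h h\mapsto\mathcal{L}(u^{(k+1)},\h h;\h b_2^{(k)})$ $(\rho_2-M/\epsilon^3)$-strongly convex whenever $\rho_2>M/\epsilon^3$. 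Optimality of $\h h^{(k+1)}$ yields a descent of at least $\frac{\rho_2-M/\epsilon^3}{2}\|\h h^{(k+1)}-\h h^{(k)}\|_2^2$.

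Finally, the dual-ascent bracket equals $\rho_2\|\h b_2^{(k+1)}-\h b_2^{(k)}\|_2^2$ after using $\h b_2^{(k+1)}-\h b_2^{(k)}=\nabla u^{(k+1)}-\h h^{(k+1)}$, and \Cref{lem43} bounds it by $\tfrac{32mn}{\rho_2\epsilon^4}\|u^{(k+1)}-u^{(k)}\|_2^2+\tfrac{8M^2}{\rho_2\epsilon^6}\|\h h^{(k+1)}-\h h^{(k)}\|_2^2$. Summing the three pieces and taking $\rho_2$ large enough that
\begin{equation*}
c_1:=\tfrac{\sigma}{2}-\tfrac{32mn}{\rho_2\epsilon^4}>0,\qquad c_2:=\tfrac{\rho_2-M/\epsilon^3}{2}-\tfrac{8M^2}{\rho_2\epsilon^6}>0
\end{equation*}
produces \eqref{ineq:lem_suff_decr}. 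The main obstacle is clearly the $\h h$-step: because $1/\|\h h\|_2$ is nonconvex, one must quantify its weak-convexity modulus using the uniform lower bound $\epsilon$ from A3 and then dominate it by the quadratic penalty $\frac{\rho_2}{2}\|\h h-\nabla u\|_2^2$, which in turn forces $\rho_2$ to be large enough that the dual-ascent error from \Cref{lem43} is still absorbable into the primal descent; the indicator $\Pi_{[c,d]}$ in the box setting costs nothing, since it is convex and so preserves every strong-convexity statement above, which is why the same argument covers both $\mathcal{L}_{\rm uncon}$ and $\mathcal{L}_{\rm box}$.
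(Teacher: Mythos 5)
Your proof is correct and follows essentially the same route as the paper's: the same three-term telescoping, strong convexity of the $u$-subproblem from Assumption A1, a curvature bound on $\|\nabla u^{(k+1)}\|_1/\|\h h\|_2$ over $\{\|\h h\|_2\ge\epsilon\}$ for the $\h h$-step, and \Cref{lem43} to absorb the dual-ascent term into the primal descent for sufficiently large $\rho_2$. The only immaterial difference is in the $\h h$-block, where the paper combines the Lipschitz-gradient descent lemma with the explicit first-order optimality condition of $\h h^{(k+1)}$ rather than invoking strong convexity of the subproblem, yielding the slightly different constant $(\rho_2-6M/\epsilon^3)/2$ in place of your $(\rho_2-M/\epsilon^3)/2$.
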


	\begin{lemma}{(subgradient bound)}\label{thm:stantionary_con}
		Under the Assumptions A1-A3 and a sufficiently large $\rho_2$, there exists  a vector $\bm\eta^{(k+1)} \in \partial {\cal L}(u^{(k+1)}, \h h^{(k+1)}; \h b^{(k+1)}_2)$ and a constant $\gamma>0$
		such that
		\begin{eqnarray}\label{eq2}\|\bm\eta^{(k+1)}\|_2^2\le \gamma \left( \|\h h^{(k+1)}-\h h^{(k)}\|_2^2 + \|\h b_2^{(k+1)}-\h b_2^{(k)}\|_2^2\right).
		\end{eqnarray}
	\end{lemma}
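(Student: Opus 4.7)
The plan is to exhibit an explicit element of $\partial\mathcal{L}(u^{(k+1)},\h h^{(k+1)};\h b_2^{(k+1)})$ by differentiating $\mathcal L$ block-by-block and then using the optimality conditions of the $u$- and $\h h$-subproblems in \eqref{ADMML1overL2all} to replace the ``missing'' terms. Write $\bm\eta^{(k+1)}=(\eta_u,\eta_{\h h},\eta_{\h b_2})$ corresponding to the three blocks, so that $\|\bm\eta^{(k+1)}\|_2^2$ is the sum of three squared norms, and bound each piece separately by successive differences $\|\h h^{(k+1)}-\h h^{(k)}\|_2$ and $\|\h b_2^{(k+1)}-\h b_2^{(k)}\|_2$.

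First I will handle the dual block: since $\partial_{\h b_2}\mathcal L=\rho_2(\nabla u^{(k+1)}-\h h^{(k+1)})$ and the multiplier update gives $\nabla u^{(k+1)}-\h h^{(k+1)}=\h b_2^{(k+1)}-\h b_2^{(k)}$, this piece is trivially $\rho_2\|\h b_2^{(k+1)}-\h b_2^{(k)}\|_2$. Next, for the $\h h$ block, the $\h h$-subproblem optimality at step $k{+}1$ reads
\begin{equation*}
-\frac{\|\nabla u^{(k+1)}\|_1\,\h h^{(k+1)}}{\|\h h^{(k+1)}\|_2^{3}}-\rho_2\h b_2^{(k)}+\rho_2(\h h^{(k+1)}-\nabla u^{(k+1)})=0,
\end{equation*}
and substituting this into $\nabla_{\h h}\mathcal L(u^{(k+1)},\h h^{(k+1)};\h b_2^{(k+1)})$ causes the differentiable part to collapse, leaving $\eta_{\h h}=-\rho_2(\h b_2^{(k+1)}-\h b_2^{(k)})$.

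The $u$-block is the real work. Using the optimality of $u^{(k+1)}$ with the \emph{old} iterates $(\h h^{(k)},\h b_2^{(k)})$, I pick a subgradient $\xi\in\partial\|\nabla u^{(k+1)}\|_1$ (bounded componentwise by $1$, hence $\|\xi\|_2\le\sqrt{2mn}$) such that the optimality relation holds. I then form the \emph{same} $\xi$-element of $\partial_u\mathcal L$ at the current iterate $(\h h^{(k+1)},\h b_2^{(k+1)})$ and subtract, which cancels the $\lambda A^T(Au^{(k+1)}-f)$ term and leaves
\begin{equation*}
\eta_u=\nabla^{T}\xi\Bigl(\tfrac{1}{\|\h h^{(k+1)}\|_2}-\tfrac{1}{\|\h h^{(k)}\|_2}\Bigr)+\rho_2\nabla^{T}(\h b_2^{(k+1)}-\h b_2^{(k)})+\rho_2\nabla^{T}(\h h^{(k)}-\h h^{(k+1)}).
\end{equation*}
The reciprocal difference is controlled by $\|\h h^{(k+1)}-\h h^{(k)}\|_2/\epsilon^{2}$ using Assumption A3, so combining with $\|\nabla^T\|_{\rm op}\le\sqrt{8}$ and $\|\xi\|_2\le\sqrt{2mn}$ yields a bound of the form $\|\eta_u\|_2\le C_1\|\h h^{(k+1)}-\h h^{(k)}\|_2+C_2\|\h b_2^{(k+1)}-\h b_2^{(k)}\|_2$. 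Squaring, summing the three block contributions, and applying $(a+b)^2\le 2a^2+2b^2$ produces \eqref{eq2} with an explicit constant $\gamma$ depending on $\rho_2$, $\epsilon$, $\|\nabla\|_{\rm op}$, and $mn$.

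The statement and argument are intended to cover both $\mathcal L_{\rm uncon}$ and $\mathcal L_{\rm box}$; for the box case the only modification is that the $u$-subgradient acquires an additional normal-cone element from $\partial\Pi_{[c,d]}(u^{(k+1)})$, which is already selected by the $u$-optimality at the old dual and therefore cancels in exactly the same subtraction step, so the bound is unaffected. The main obstacle is the non-Lipschitz behaviour of $1/\|\h h\|_2$ and the fact that the subdifferential $\partial\|\nabla u\|_1$ is set-valued, which is why Assumption A3 (uniform lower bound $\epsilon$) is essential and why the subgradient selection $\xi$ must be made once via the $u$-optimality and then reused consistently; without this coupling, one could not express $\eta_u$ purely in terms of successive errors.
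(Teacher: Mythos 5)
Your proposal is correct and follows essentially the same route as the paper's proof: both write the three block optimality conditions of \eqref{ADMML1overL2all}, form the corresponding subgradient components of $\mathcal L$ at the updated triple $(u^{(k+1)},\h h^{(k+1)},\h b_2^{(k+1)})$ with the \emph{same} selection $p^{(k+1)}=\nabla^T\xi\in\partial\|\nabla u^{(k+1)}\|_1$ (and the same normal-cone element $r^{(k+1)}$ in the box case, so it cancels), subtract to express each component in successive differences, and then control the reciprocal term via Assumption A3 and the bounds $\|\xi\|_2\le\sqrt{2mn}$, $\|\nabla^T\|_2\le\sqrt 8$. The resulting block expressions $\eta_u,\eta_{\h h},\eta_{\h b_2}$ coincide with the paper's $\eta_1^{(k+1)},\eta_2^{(k+1)},\eta_3^{(k+1)}$, so no further comparison is needed.
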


	\begin{theorem}\label{theo2}
		(convergence of $L_1/L_2$-uncon)
		Under the Assumptions A1-A3 and a sufficiently large $\rho_2$,  the sequence $\{u^{(k)}, \h h^{(k)}\}$  generated by \eqref{ADMML1overL2_uncon}
		has a subsequence convergent to a critical point of \eqref{equ:split_model_uncon}.
			\end{theorem}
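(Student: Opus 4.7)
The plan is to combine the three lemmas above with the standard nonconvex ADMM recipe (monotone descent plus lower bound, then subsequence extraction) to produce a critical point of \eqref{equ:split_model_uncon}. Throughout I would work with the full triple $(u^{(k)},\h h^{(k)},\h b_2^{(k)})$ even though only $(u^{(k)},\h h^{(k)})$ appears in the claim.

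First I would derive a hidden uniform bound on $\{\h b_2^{(k)}\}$ from the $\h h$-subproblem optimality. Differentiating $\mathcal{L}_{\rm uncon}$ in $\h h$ at the minimizer and substituting the dual update $\h b_2^{(k+1)}=\h b_2^{(k)}+\nabla u^{(k+1)}-\h h^{(k+1)}$ collapses the optimality condition to the clean identity $\rho_2\h b_2^{(k+1)}=-\|\nabla u^{(k+1)}\|_1\,\h h^{(k+1)}/\|\h h^{(k+1)}\|_2^3$. By A2 and A3 this yields $\|\h b_2^{(k)}\|_2\le M/(\rho_2\epsilon^2)$ for all $k\ge 1$. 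Second, I would use this to lower-bound $\mathcal{L}_{\rm uncon}$ along the iterates: the ratio and the data-fidelity term are nonnegative, and applying Young's inequality with constant $\rho_2$ to $\langle\rho_2\h b_2^{(k)},\nabla u^{(k)}-\h h^{(k)}\rangle$ absorbs the penalty $(\rho_2/2)\|\h h^{(k)}-\nabla u^{(k)}\|_2^2$, leaving only $-(\rho_2/2)\|\h b_2^{(k)}\|_2^2\ge -M^2/(2\rho_2\epsilon^4)$.

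Combining this lower bound with \Cref{lem:suff_decr} shows that $\{\mathcal{L}_{\rm uncon}(u^{(k)},\h h^{(k)};\h b_2^{(k)})\}$ is non-increasing and bounded below, hence convergent. Telescoping the sufficient-descent inequality then gives
\[
\sum_{k}\|u^{(k+1)}-u^{(k)}\|_2^2<\infty,\qquad \sum_{k}\|\h h^{(k+1)}-\h h^{(k)}\|_2^2<\infty,
\]
so both successive differences tend to zero, and \Cref{lem43} forces $\|\h b_2^{(k+1)}-\h b_2^{(k)}\|_2\to 0$ as well. A2 bounds $\{u^{(k)}\}$, Step 1 bounds $\{\h b_2^{(k)}\}$, and the closed form \eqref{eq:update-h} together with the boundedness of $\nabla u^{(k)}$ and $\h b_2^{(k)}$ bounds $\{\h h^{(k)}\}$, so Bolzano--Weierstrass extracts a subsequence $(u^{(k_j)},\h h^{(k_j)},\h b_2^{(k_j)})\to(u^*,\h h^*,\h b_2^*)$. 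By A3 the limit satisfies $\|\h h^*\|_2\ge\epsilon>0$, so the $L_1/L_2$ quotient is locally $C^1$ at $\h h^*$; passing to the limit in the dual update and using $\h b_2^{(k+1)}-\h b_2^{(k)}\to 0$ also gives feasibility $\h h^*=\nabla u^*$. Finally, \Cref{thm:stantionary_con} together with the vanishing of successive errors produces $\bm\eta^{(k_j+1)}\in\partial\mathcal{L}_{\rm uncon}(u^{(k_j+1)},\h h^{(k_j+1)};\h b_2^{(k_j+1)})$ with $\bm\eta^{(k_j+1)}\to 0$; outer semicontinuity of the limiting subdifferential (the only nonsmooth piece is the convex continuous function $\|\nabla u\|_1$, and the quotient is smooth near $\h h^*$) yields $0\in\partial\mathcal{L}_{\rm uncon}(u^*,\h h^*;\h b_2^*)$, which by unwinding the subdifferential is the criticality condition for \eqref{equ:split_model_uncon}.

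The main obstacle is the first step: without the closed-form identity for $\h b_2^{(k+1)}$ coming from the $\h h$-subproblem optimality, the cross-term $\langle\rho_2\h b_2,\nabla u-\h h\rangle$ in the augmented Lagrangian can drive $\mathcal{L}_{\rm uncon}$ arbitrarily negative and the ``monotone descent plus lower bound'' template collapses. This is precisely where A3 is indispensable, since the bound $\|\h b_2^{(k+1)}\|_2\le M/(\rho_2\epsilon^2)$ degenerates as $\|\h h^{(k+1)}\|_2\downarrow 0$; numerically, A3 is the verifiable surrogate for non-divergence of the algorithm. Once the dual iterates are trapped in a ball, everything downstream---summability of the primal increments, the \Cref{lem43} consequence for the dual, and the closed-subdifferential limit---is routine bookkeeping with the three lemmas.
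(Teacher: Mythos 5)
Your proposal is correct and follows essentially the same route as the paper's proof: bound $\h b_2^{(k)}$ via the closed-form identity from the $\h h$-subproblem optimality, deduce boundedness of $\h h^{(k)}$ and a lower bound on $\mathcal{L}_{\rm uncon}$, combine with the sufficient-descent lemma to get summable increments, invoke \Cref{lem43} for the dual increments, and finish with Bolzano--Weierstrass and the subgradient bound of \Cref{thm:stantionary_con}. The only cosmetic difference is that you absorb the cross term by Young's inequality where the paper completes the square, and you spell out the outer-semicontinuity step that the paper leaves implicit.
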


		\begin{proof}
			We first show that if $\{u^{(k)}\}$ is bounded, then  $\{\h h^{(k)},\h b_2^{(k)}\}$ is also bounded. As	$\|u^{(k)}\|_2$ is bounded, so is $\|\nabla u^{(k)}\|_1$. 
		It  follows from the Assumption A2 and the optimality condition for $\h b_2$ in \eqref{equ:b_2} that we have
		\[
		\|\h b_2^{(k)}\|_2=\left\|\frac{\|\nabla u^{(k)}\|_1}{\rho_2}
		\frac{\h h^{(k)}}{\|\h h^{(k)}\|^3}\right\|_2\leq\frac{\|\nabla u^{(k)}\|_1}{\rho_2\epsilon^2}.
		\]
		Therefore, $\{\h b_2^{(k)}\}$ is bounded and hence $\{\h h^{(k)}\}$ is also bounded due to the $\h h$-update \eqref{eq:update-h} and boundedness of $\nabla u$. 
	Then it follows from the Bolzano-Weierstrass Theorem that the sequence  $\{u^{(k)},\h h^{(k)},\h b_2^{(k)}\}$ has a  convergent subsequence,
		denoted by $(u^{(k_j)},\h h^{(k_j)},\h b_2^{(k_j)})\rightarrow (u^*,\h h^*,\h b_2^*),$ as $k_j\rightarrow \infty$.	
		In addition, we can estimate that
		\begin{equation*}
			\begin{split}
				& \mathcal{L}_{\rm{uncon}}(u^{(k)}, \h h^{(k)}; \h b_2^{(k)})\\ 
				= & \frac{\| \nabla u^{(k)} \|_1}{\| \h h^{(k)} \|_2}+\frac{\lambda}{2}\|Au-f\|_2^2 + \frac{\rho_2}{2}\| \h h^{(k)} - \nabla u^{(k)}-\h b_2 \|_2^2- \frac{\rho_2}{2}\|\h b^{(k)}_2\|_2^2\\
				\geq & \frac{\| \nabla u^{(k)} \|_1}{\| \h h^{(k)} \|_2} - \frac{\|\nabla u^{(k)}\|_1^2}{\rho_2\epsilon^4},
			\end{split}
		\end{equation*}
		which gives a lower bound of ${\cal L}_{\rm{uncon}}$ \tr{owing to the boundedness of $u^{(k)}$}.
		Therefore, we have $\mathcal L_{\rm{uncon}}(u^{(k)},\h h^{(k)},\h b_2^{(k)})$ converges due to its monotonic decreasing by \Cref{lem:suff_decr}. 	
		
		We then  sum the inequality \eqref{ineq:lem_suff_decr}  from $k=0$ to $K$, thus getting
		\begin{eqnarray*}
			&&	\mathcal{L}_{\rm{uncon}}(u^{(K+1)}, \h h^{(K+1)}; \h b_2^{(K+1)})\\
			&\le& \mathcal{L}_{\rm{uncon}}(u^{(0)}, \h h^{(0)}; \h b_2^{(0)})-c_1\sum_{k=0}^K\|u^{(k+1)}-u^{(k)}\|_2^2-c_2\sum_{k=0}^K\|\h h^{(k+1)} - \h h^{(k)}\|_2^2.
		\end{eqnarray*}
		Let $K\rightarrow \infty,$ we have both $\sum_{k=0}^{\infty}\|u^{(k+1)}-u^{(k)}\|_2^2$ and $\sum_{k=0}^{\infty}\|\h h^{(k+1)} - \h h^{(k)}\|_2^2$ are  finite, indicating that
		$u^{(k)}-u^{(k+1)}\rightarrow 0$, $\h h^{(k)} - \h h^{(k+1)}\rightarrow 0$. Then by Lemma \ref{lem43}, we get $\h b_2^{(k)}-\h b_2^{(k+1)} \rightarrow 0 $.
		By  $(u^{(k_j)},\h h^{(k_j)},\h b_2^{(k_j)})\rightarrow (u^*,\h h^*,\h b_2^*),$ we have  $(u^{(k_j+1)},\h h^{(k_j+1)},\h b_2^{(k_j+1)})\rightarrow (u^*,\h h^*,\h b_2^*)$, and $\nabla u^*=\h h^*$ (by the update of $\h b_2$). Here, by Lemma \ref{thm:stantionary_con}, we have $\h 0\in \partial\mathcal L_{\rm{uncon}}(u^*,\h h^*, \h b_2^*)$ and hence $(u^*,\h h^*)$ is a critical point of \eqref{equ:split_model_uncon}.
		\end{proof}

	For the box  model \eqref{eq:grad_uncon_l1l2_box} with an explicit bounded assumption on $u$, we can prove that the ADMM framework has the same convergence results  as in \Cref{theo2} without the Assumption A2. The proof is thus omitted.
	
	\begin{theorem}\label{theo:box}
		(convergence of $L_1/L_2$-box)
		Under the Assumptions A1, A3, and a sufficiently large $\rho_2$,  the sequence $\{u^{(k)}, \h h^{(k)}\}$  generated by
		\Cref{alg:l1dl2_box} 
		always has a subsequence  convergent to a critical point of   \eqref{eq:grad_uncon_l1l2_box}.
	\end{theorem}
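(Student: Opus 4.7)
The plan is to mirror the argument used in \Cref{theo2}, with the key observation that the indicator $\Pi_{[c,d]}(u)$ appearing in $\mathcal{L}_{\rm{box}}$ makes Assumption A2 automatic. Indeed, since $u^{(k+1)}$ is a minimizer of $\mathcal{L}_{\rm{box}}(\cdot, \h h^{(k)};\h b_2^{(k)})$, the indicator forces $u^{(k+1)}\in[c,d]$ componentwise, so $\|u^{(k)}\|_2$ is uniformly bounded in terms of the box size and the image dimensions. Consequently $\|\nabla u^{(k)}\|_1$ is bounded and we may set $M=\sup_k\|\nabla u^{(k)}\|_1$ without invoking A2 as a hypothesis.

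Next I would verify that \Cref{lem43,lem:suff_decr,thm:stantionary_con} continue to hold with $\mathcal{L}$ replaced by $\mathcal{L}_{\rm{box}}$. From \eqref{eq:Lboxvsuncon}, the extra summand $\Pi_{[c,d]}(u)$ depends only on $u$, so the $\h h$ and $\h b_2$ update rules are identical to those in the unconstrained case and \Cref{lem43} carries over verbatim. For \Cref{lem:suff_decr}, the sufficient-descent estimate is derived by comparing successive values of the augmented Lagrangian; since $u^{(k)},u^{(k+1)}\in[c,d]$, the indicator term contributes $0$ at every iterate and the same positive constants $c_1,c_2$ are obtained. For \Cref{thm:stantionary_con}, the first-order optimality for the $u$-minimization now picks up a normal-cone contribution $\mathcal{N}_{[c,d]}(u^{(k+1)})$ alongside the smooth gradient, but this contribution lies inside $\partial \Pi_{[c,d]}(u^{(k+1)})\subseteq\partial\mathcal{L}_{\rm{box}}$, so the inclusion in \eqref{eq2} persists with a (possibly enlarged) constant $\gamma$.

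With these lemmas in hand, the remainder of the argument reproduces the proof of \Cref{theo2} line by line. Boundedness of $\{\h b_2^{(k)}\}$ follows from the $\h h$-update optimality together with the lower bound $\|\h h^{(k)}\|_2\geq\epsilon$ from A3, and boundedness of $\{\h h^{(k)}\}$ then follows from the explicit formula \eqref{eq:update-h}. Since $\mathcal{L}_{\rm{box}}$ is bounded below (the lower bound computed in the uncon case is unchanged because $\Pi_{[c,d]}\geq 0$) and is monotonically decreasing by the adapted \Cref{lem:suff_decr}, it converges. Telescoping the sufficient-descent estimate gives $\|u^{(k+1)}-u^{(k)}\|_2,\|\h h^{(k+1)}-\h h^{(k)}\|_2\to 0$, and \Cref{lem43} then yields $\|\h b_2^{(k+1)}-\h b_2^{(k)}\|_2\to 0$. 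Bolzano--Weierstrass extracts a convergent subsequence $(u^{(k_j)},\h h^{(k_j)},\h b_2^{(k_j)})\to(u^*,\h h^*,\h b_2^*)$ with $u^*\in[c,d]$ by closedness of the box and $\nabla u^*=\h h^*$ from the $\h b_2$ update. The adapted \Cref{thm:stantionary_con} delivers $\bm\eta^{(k_j+1)}\to\h 0\in\partial\mathcal{L}_{\rm{box}}(u^*,\h h^*,\h b_2^*)$, identifying $(u^*,\h h^*)$ as a critical point of \eqref{equ:split_model_box}.

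The main technical obstacle is the subgradient step: one must verify that $\partial\mathcal{L}_{\rm{box}}$ splits cleanly into the pieces inherited from $\mathcal{L}_{\rm{uncon}}$ plus $\partial\Pi_{[c,d]}$, which requires a Moreau--Rockafellar-type sum rule. This is applicable because $\Pi_{[c,d]}$ is proper, convex, and lower semi-continuous while the remaining summands of $\mathcal{L}_{\rm{uncon}}$ are smooth in $u$. Passing the inclusion to the limit along the subsequence then hinges on the outer semi-continuity (closed graph) of the normal-cone mapping $\mathcal{N}_{[c,d]}$, which is standard since $[c,d]$ is polyhedral. Everything else is a bookkeeping exercise of tracking the indicator term through inequalities that already hold in the unconstrained proof.
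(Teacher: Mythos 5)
Your proposal is correct and follows essentially the same route as the paper, which omits the proof precisely because (as you observe) the box constraint makes Assumption A2 automatic and the supporting lemmas (\Cref{lem43,lem:suff_decr,thm:stantionary_con}) are already stated and proved for $\mathcal{L}_{\rm box}$ as well as $\mathcal{L}_{\rm uncon}$ — in particular the appendix handles the indicator via convexity in \Cref{lem:suff_decr_u} and via the term $r^{(k+1)}\in\partial\Pi_{[c,d]}(u^{(k+1)})$ in the optimality conditions. Your additional care with the subdifferential sum rule and the closedness of the normal-cone map is a welcome refinement but does not change the argument.
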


\begin{theorem}\label{theo2inr} 
(convergence of inexact scheme in $L_1/L_2$-uncon) \tr{Under the Assumption A1}
	and a sufficiently large $\rho_2$,  one can solve the $u$-subproblem in \eqref{eq:uncon_u} within an error tolerance \tr{$\varepsilon_{k+1},$} i.e.,
		\begin{eqnarray} 
\|\tilde{u}^{(k+1)}-u^{(k+1)}\|_2^2\leq \tr{\varepsilon_{k+1}},
		\end{eqnarray}
	and the 	sequence  $\{\tilde{\h h}^{(k+1)},	\tilde{\h b}_2^{(k+1)}\}$ is
generated \tr{by an inexact ADMM scheme, i.e., 
\begin{equation} \label{ADMML1overL2allx}
	\left\{\begin{array}{l}
	\tilde{\h h}^{(k+1)}=\arg\min_{\h h} \mathcal{L}_{\rm{uncon}}({\tilde u}^{(k+1)}, \h h;\tilde{\h b}^{(k)}_2)\\
	\tilde{\h b}_2^{(k+1)} =\tilde{\h  b}_2^{(k)} + \nabla {\tilde u}^{(k+1)} - \tilde{\h h}^{(k+1)}.
	\end{array}\right.
	\end{equation}}
	If \tr{$\{\tilde u^{(k)}, \tilde{\h h}^{(k)}\}$ satisfies the Assumptions A2-A3
	and
	}
	$\sum_k\varepsilon_k<+\infty$, then this 
 sequence  
has a subsequence convergent to a critical point  of
\eqref{equ:split_model_uncon}.
	\end{theorem}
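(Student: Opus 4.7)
The plan is to mirror the proof of Theorem \ref{theo2} while tracking an additional error term from the inexact $u$-update, and to rely on the summability hypothesis $\sum_k \varepsilon_k < +\infty$ to absorb it. Since Assumption A2 is imposed on $\{\tilde u^{(k)}\}$ directly, the boundedness argument from the proof of Theorem \ref{theo2} carries over verbatim: the $\tilde{\h h}$-optimality condition combined with A3 forces $\|\tilde{\h b}_2^{(k)}\|_2 \le \|\nabla \tilde u^{(k)}\|_1/(\rho_2 \epsilon^2)$, and then $\{\tilde{\h h}^{(k)}\}$ is bounded through the closed-form update \eqref{eq:update-h}. Bolzano--Weierstrass then supplies a convergent subsequence $(\tilde u^{(k_j)}, \tilde{\h h}^{(k_j)}, \tilde{\h b}_2^{(k_j)}) \to (\tilde u^*, \tilde{\h h}^*, \tilde{\h b}_2^*)$.

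The heart of the argument is an approximate sufficient-descent inequality. Let $u^{(k+1)} := \arg\min_u \mathcal{L}_{\rm uncon}(u, \tilde{\h h}^{(k)}; \tilde{\h b}_2^{(k)})$ denote the exact minimizer that $\tilde u^{(k+1)}$ approximates. Applying Lemma \ref{lem:suff_decr} to $u^{(k+1)}$ and then transferring back to $\tilde u^{(k+1)}$ using local Lipschitz continuity of $\mathcal{L}_{\rm uncon}$ in $u$ on the bounded iterate region (the ratio $\|\nabla u\|_1/\|\tilde{\h h}\|_2$ is convex in $u$ with Lipschitz constant at most $\|\nabla\|_{\mathrm{op}}/\epsilon$ thanks to A3, and the quadratic terms are Lipschitz on bounded sets), followed by Young's inequality to turn the cross term $\|\tilde u^{(k+1)} - u^{(k+1)}\|_2 \le \sqrt{\varepsilon_{k+1}}$ into an $\varepsilon_{k+1}$ contribution at the price of slightly shrinking the descent constant, should deliver
\[
\mathcal{L}_{\rm uncon}(\tilde u^{(k+1)}, \tilde{\h h}^{(k+1)}; \tilde{\h b}_2^{(k+1)}) \le \mathcal{L}_{\rm uncon}(\tilde u^{(k)}, \tilde{\h h}^{(k)}; \tilde{\h b}_2^{(k)}) - \tilde c_1 \|\tilde u^{(k+1)} - \tilde u^{(k)}\|_2^2 - c_2 \|\tilde{\h h}^{(k+1)} - \tilde{\h h}^{(k)}\|_2^2 + C\,\varepsilon_{k+1}.
\]
Summing this telescoping bound, using the lower bound on $\mathcal{L}_{\rm uncon}$ derived in Theorem \ref{theo2} together with $\sum_k \varepsilon_k < +\infty$, gives $\sum_k \|\tilde u^{(k+1)} - \tilde u^{(k)}\|_2^2 < \infty$ and $\sum_k \|\tilde{\h h}^{(k+1)} - \tilde{\h h}^{(k)}\|_2^2 < \infty$. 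Both successive differences therefore vanish, and Lemma \ref{lem43} (whose derivation uses only A1, A2, and the exact $\tilde{\h h}$/$\tilde{\h b}_2$ updates, all valid here) then forces $\tilde{\h b}_2^{(k+1)} - \tilde{\h b}_2^{(k)} \to 0$.

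To pass to a critical point I would deploy an inexact analog of Lemma \ref{thm:stantionary_con}: the exact first-order condition satisfied at $u^{(k+1)}$ is perturbed by at most $O(\sqrt{\varepsilon_{k+1}})$ when read at $\tilde u^{(k+1)}$, again by local Lipschitzness on the bounded iterate region, and this perturbation vanishes along the subsequence because $\varepsilon_k \to 0$. Hence the accumulation point satisfies $\h 0 \in \partial \mathcal{L}_{\rm uncon}(\tilde u^*, \tilde{\h h}^*, \tilde{\h b}_2^*)$ together with $\nabla \tilde u^* = \tilde{\h h}^*$ (the latter from the $\tilde{\h b}_2$-update combined with $\tilde{\h b}_2^{(k+1)} - \tilde{\h b}_2^{(k)} \to 0$), i.e.\ $(\tilde u^*, \tilde{\h h}^*)$ is a critical point of \eqref{equ:split_model_uncon}. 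The main obstacle is the error bookkeeping wherever the inexact $u$-update feeds into the descent and subgradient estimates; the crucial enabler is that A2 and A3 confine every iterate to a region on which $\mathcal{L}_{\rm uncon}$ is Lipschitz in $u$ with a constant depending only on $\epsilon$, $M$, and the iterate bound, so the $\sqrt{\varepsilon_{k+1}}$ Lipschitz perturbations can be squared into $\varepsilon_{k+1}$ via Young's inequality and then absorbed under the hypothesis $\sum_k \varepsilon_k < +\infty$.
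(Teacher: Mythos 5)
Your architecture coincides with the paper's: bound the iterates and extract a subsequence as in \Cref{theo2}, derive an approximate sufficient-descent inequality with an additive residue controlled by $\varepsilon_{k+1}$ by comparing $\tilde u^{(k+1)}$ to the exact minimizer $u^{(k+1)}$ of the $u$-subproblem, telescope using $\sum_k\varepsilon_k<+\infty$, invoke the analog of \Cref{lem43} to kill the dual differences, and finish with the subgradient bound of \Cref{thm:stantionary_con}. Your Young's-inequality treatment of the genuine cross term $2\|u^{(k+1)}-\tilde u^{(k+1)}\|_2\,\|\tilde u^{(k+1)}-\tilde u^{(k)}\|_2$ is sound (and in fact tidier than the paper's, which bounds that product by a multiple of $\sqrt{\varepsilon_{k+1}}$ while recording it as $\varepsilon_{k+1}$).

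There is, however, one step where your bookkeeping as written does not close. To replace $\mathcal{L}_{\rm{uncon}}(u^{(k+1)},\tilde{\h h}^{(k)};\tilde{\h b}_2^{(k)})$ by $\mathcal{L}_{\rm{uncon}}(\tilde u^{(k+1)},\tilde{\h h}^{(k)};\tilde{\h b}_2^{(k)})$ in the descent chain you invoke first-order (local) Lipschitz continuity of $\mathcal{L}_{\rm{uncon}}$ in $u$. That produces an error of order $\|\tilde u^{(k+1)}-u^{(k+1)}\|_2\le\sqrt{\varepsilon_{k+1}}$, and this term is \emph{not} a product with a successive-difference factor, so Young's inequality cannot convert it into an $\varepsilon_{k+1}$ contribution at the cost of shrinking a descent constant; you would be left needing $\sum_k\sqrt{\varepsilon_k}<+\infty$, which is strictly stronger than the stated hypothesis. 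The paper's proof avoids this by exploiting that $u^{(k+1)}$ is the exact minimizer of the $u$-subproblem, so the first-order term in the expansion about $u^{(k+1)}$ drops out and \Cref{lemma:lip}(c) gives $\mathcal{L}_{\rm{uncon}}(\tilde u^{(k+1)},\cdot)\le\mathcal{L}_{\rm{uncon}}(u^{(k+1)},\cdot)+\tfrac{\tilde L}{2}\|\tilde u^{(k+1)}-u^{(k+1)}\|_2^2\le\mathcal{L}_{\rm{uncon}}(u^{(k+1)},\cdot)+\tfrac{\tilde L}{2}\varepsilon_{k+1}$, which is summable. You need this minimizer-based, second-order estimate (or an equivalent device) at that step; plain Lipschitz continuity of the objective in $u$ is not enough under $\sum_k\varepsilon_k<+\infty$.
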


\begin{proof}
\tr{
As $\tilde u^{(k)}$ is bounded, we denote $\tilde M:=\sup_{k}\|\nabla {\tilde u}^{k}\|_1$ and $\hat M:=\sup_{k}\{\|{\tilde u}^{(k)}\|_2\}$. 
Following the proof of \Cref{lem:suff_decr}, we have
\begin{equation*}
    \begin{split}
&\mathcal{L}_{\rm{uncon}}({ u}^{(k+1)},\tilde{ \h h}^{(k)};\tilde{\h b}_2^{(k)})\le \mathcal{L}_{\rm{uncon}}({\tilde u}^{(k)}, \tilde{\h h}^{(k)}; \tilde{\h b}_2^{(k)})
-\frac{\sigma\lambda}{2}\|u^{(k+1)}-{\tilde u}^{(k)}\|_2^2\\
&\mathcal{L}_{\rm{uncon}}(\tilde u^{(k+1)}, \tilde{\h h}^{(k+1)};\tilde{\h b}^{(k)}_2)\leq \mathcal{L}_{\rm{uncon}}(\tilde u^{(k+1)}, \tilde{\h h}^{(k)};\tilde{\h b}_2^{(k)})  -\frac{\rho_2-3\tilde L}{2}\|\tilde{\h h}^{(k+1)}-\tilde{\h h}^{(k)}\|_2^2,
\end{split}
\end{equation*}
where $\tilde L = \frac {2\tilde M}{\epsilon^3}.$
 Simple calculations lead to
\begin{equation*}
\begin{split}
    \|u^{(k+1)}-{\tilde u}^{(k)}\|_2^2 &
=\| (u^{(k+1)}-{\tilde u}^{(k+1)})+ ({\tilde u}^{(k+1)}-{\tilde u}^{(k)})\|_2^2 \\
&\ge\|u^{(k+1)}-{\tilde u}^{(k+1)}\|^2+\|{\tilde u}^{(k+1)}-{\tilde u}^{(k)}\|^2  -2\|{\tilde u}^{(k+1)}-{\tilde u}^{(k)}\|\|u^{(k+1)}-{\tilde u}^{(k+1)}\|\\
&\ge\|u^{(k+1)}-{\tilde u}^{(k+1)}\|^2+\|{\tilde u}^{(k+1)}-{\tilde u}^{(k)}\|^2-4{\hat M}\varepsilon_{k+1}\\
&\ge\|{\tilde u}^{(k+1)}-{\tilde u}^{(k)}\|^2-4{\hat M}\varepsilon_{k+1}.
\end{split}
\end{equation*}
It follows from \Cref{lemma:lip} that 
\begin{equation*}
    \begin{split}
        \mathcal{L}_{\rm{uncon}}(\tilde u^{(k+1)}, \tilde{\h h}^{(k)};\tilde{ \h b}^{(k)}_2) & \le \mathcal{\tilde L}_{\rm{uncon}}(u^{(k+1)}, \tilde{\h h}^{(k)};\tilde{ \h b}_2^{(k)})+\frac{\tilde L}{2}\|\tilde u^{(k+1)}-u^{(k+1)}\|^2_2 \\
        &\leq \mathcal{L}_{\rm{uncon}}(u^{(k+1)}, \tilde{\h h}^{(k)}; \tilde{\h b}_2^{(k)})+\frac{\tilde L \varepsilon_{k+1}}{2}.
    \end{split}
\end{equation*}
As analogous to \Cref{lem43}, it holds
\begin{eqnarray*}
	\left\|\tilde{\h b}_2^{(k+1)}-\tilde{\h b}_2^{(k)}\right\|_2^2\le\kappa_1\left\|{\tilde u}^{(k+1)}-{\tilde u}^{(k)}\right\|_2^2+ \kappa_2\left\|\tilde{\h h}^{(k+1)}-\tilde{\h h}^{(k)}\right\|_2^2,
		\end{eqnarray*}
where $\kappa_1:=\frac{32 mn}{\rho_2^2\epsilon^{4}}$ and $\kappa_2:=\frac{8\tilde M^2}{\rho_2^2\epsilon^6}$.
By combining all the inequalities, we get
\begin{eqnarray} &&\mathcal{L}_{\rm{uncon}}({\tilde u}^{(k+1)},\tilde{\h h}^{(k+1)};\tilde{\h b}_2^{(k+1)})\notag\\&\le& \mathcal{L}_{\rm{uncon}}({\tilde u}^{(k)}, \tilde{\h h}^{(k)}; \tilde{\h b}_2^{(k)})-\tilde c_1\|{\tilde u}^{(k+1)}-{\tilde u}^{(k)}\|_2^2
-\tilde c_2\|\tilde{\h h}^{(k+1)} -\tilde{ \h h}^{(k)}\|_2^2+\tilde c_3\varepsilon_{k+1},\label{ineq:inexact} \end{eqnarray}
where ${\tilde c}_1:=\frac{\sigma\lambda}{2}-\frac{16mn}{\rho_2\epsilon^{4}}, {\tilde c}_2:=\frac{\rho_2-3L}{2}-\frac{4\tilde M^2}{\rho_2\epsilon^6},$ and $\tilde c_3:=2{\hat M}\sigma\lambda+\frac{\tilde L}{2}.$
We can choose a sufficiently large $\rho_2$ such that $\tilde c_1,\tilde c_2>0$.
Summing the inequality \eqref{ineq:inexact} with $k$ from 0 to $K$ and letting $K\rightarrow \infty,$  we obtain: $\sum_{k=0}^{\infty}\|{\tilde u}^{(k+1)}-{\tilde u}^{(k)}\|_2^2$ and $\sum_{k=0}^{\infty}\|\tilde{\h h}^{(k+1)} -\tilde{\h h}^{(k)}\|_2^2$ are  finite, since $\tilde c_3\sum_k \varepsilon_{k}<+\infty$ by assumption. The rest of the proof follows the same as \Cref{theo2},  thus omitted.}
\end{proof}

Similarly, we have the convergence of inexact scheme in  $L_1/L_2$-box under a restriction that $\tilde u^{(k+1)}$ should belong to the feasible set, i.e., $\tilde u^{(k+1)}\in [c,d]. $
\begin{theorem}\label{theo3inr} 
(convergence of inexact scheme in $L_1/L_2$-box)
\tr{Under the Assumption A1,}
	and a sufficiently large $\rho_2$,  one can solve the $u$-subproblem in \eqref{eq:u-update4box}  within an error tolerance $\varepsilon_k$ and feasible set, i.e.,
		\begin{eqnarray} 
\|\tilde{u}^{(k+1)}-u^{(k+1)}\|_2^2\leq \tr{\varepsilon_{k+1}} \quad\mbox{and}\quad \tilde u^{(k+1)}\in [c,d].
		\end{eqnarray}
	If
	$\sum_k\varepsilon_k<+\infty$ \tr{and  $\tilde{ \h h}^{(k)}$ satisfies the Assumption A3,} then the  
sequence  $\{\tilde u^{(k)},\tilde{\h h}^{(k)}\}$
		has a subsequence convergent to a critical point of   \eqref{eq:grad_uncon_l1l2_box}.
	\end{theorem}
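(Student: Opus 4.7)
The plan is to mirror the proof of Theorem \ref{theo2inr} with the modifications already seen in passing from Theorem \ref{theo2} to Theorem \ref{theo:box}. The key observation is that the hypothesis $\tilde u^{(k+1)} \in [c,d]$ delivers for free the boundedness previously imported via Assumption A2: we immediately get $\hat M := \sup_k \|\tilde u^{(k)}\|_2 \leq \sqrt{mn}\max(|c|,|d|)$ and $\tilde M := \sup_k \|\nabla \tilde u^{(k)}\|_1 < \infty$, since $\nabla$ is a bounded linear operator and the iterates lie in a compact box. Consequently every constant ($\kappa_1, \kappa_2, \tilde c_1, \tilde c_2, \tilde c_3$) appearing in Theorem \ref{theo2inr} has a direct analogue defined in terms of $\tilde M$ and $\hat M$.

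The second step is to replay the sufficient-descent chain with $\mathcal{L}_{\rm box}$ replacing $\mathcal{L}_{\rm uncon}$. Because $\mathcal{L}_{\rm box}(u,\h h;\h b_2)=\mathcal{L}_{\rm uncon}(u,\h h;\h b_2)+\Pi_{[c,d]}(u)$ and the iterates satisfy $\tilde u^{(k)}\in[c,d]$ by hypothesis, the indicator contributes $0$ at each iterate, so the three estimates leading to \eqref{ineq:inexact} go through verbatim (with the inexactness in the $u$-step handled by the same expansion $\|u^{(k+1)}-\tilde u^{(k)}\|_2^2 \ge \|\tilde u^{(k+1)}-\tilde u^{(k)}\|_2^2 - 4\hat M\varepsilon_{k+1}$), yielding for $\rho_2$ sufficiently large
\begin{equation*}
\mathcal{L}_{\rm box}(\tilde u^{(k+1)},\tilde{\h h}^{(k+1)};\tilde{\h b}_2^{(k+1)}) \le \mathcal{L}_{\rm box}(\tilde u^{(k)},\tilde{\h h}^{(k)};\tilde{\h b}_2^{(k)}) - \tilde c_1\|\tilde u^{(k+1)}-\tilde u^{(k)}\|_2^2 - \tilde c_2\|\tilde{\h h}^{(k+1)}-\tilde{\h h}^{(k)}\|_2^2 + \tilde c_3\varepsilon_{k+1}.
\end{equation*}

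The third step is a uniform lower bound for $\mathcal{L}_{\rm box}$ along the iterates, which proceeds exactly as in Theorem \ref{theo2}: feasibility kills the indicator, and the remaining piece is bounded below by $-\tilde M^2/(\rho_2\epsilon^4)$ using Assumption A3 and the $\tilde{\h b}_2$-optimality condition. Summing the modified descent inequality from $k=0$ to $K$, using $\sum_k\varepsilon_k<\infty$, and letting $K\to\infty$ forces both $\sum_k\|\tilde u^{(k+1)}-\tilde u^{(k)}\|_2^2$ and $\sum_k\|\tilde{\h h}^{(k+1)}-\tilde{\h h}^{(k)}\|_2^2$ to be finite. The analogue of Lemma \ref{lem43} (with $M$ replaced by $\tilde M$) then gives $\|\tilde{\h b}_2^{(k+1)}-\tilde{\h b}_2^{(k)}\|_2\to 0$. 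Boundedness of $\{\tilde{\h h}^{(k)}\}$ follows from the closed-form update \eqref{eq:update-h} together with the bounded $\nabla \tilde u^{(k)}$, and boundedness of $\{\tilde{\h b}_2^{(k)}\}$ follows from the $\tilde{\h h}$-optimality condition together with Assumption A3. By Bolzano--Weierstrass we extract a subsequence $(\tilde u^{(k_j)},\tilde{\h h}^{(k_j)},\tilde{\h b}_2^{(k_j)})\to(\tilde u^*,\tilde{\h h}^*,\tilde{\h b}_2^*)$, and the $\tilde{\h b}_2$-update combined with vanishing successive differences gives $\nabla \tilde u^*=\tilde{\h h}^*$. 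Since the box is closed, $\tilde u^*\in[c,d]$.

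The delicate part, and the one I expect to require the most care, is the critical-point characterization at the limit, because the $u$-subproblem for the box model carries both an inexactness error and a normal-cone contribution from $\partial\Pi_{[c,d]}$. The strategy is to write the approximate first-order condition for $\tilde u^{(k+1)}$ in the form $\bm r^{(k+1)} \in \partial_u \mathcal{L}_{\rm box}(\tilde u^{(k+1)},\tilde{\h h}^{(k)};\tilde{\h b}_2^{(k)})$ with a residual $\bm r^{(k+1)}$ whose norm is controlled by $\varepsilon_{k+1}$ (standard for the inexact ADMM stopping criterion), combine it with the exact optimality conditions for $\tilde{\h h}^{(k+1)}$ and $\tilde{\h b}_2^{(k+1)}$, and then take limits along the subsequence. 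Because $\sum_k\varepsilon_k<\infty$ ensures $\varepsilon_{k}\to 0$, and because the normal cone of $[c,d]$ has a closed graph, passing to the limit yields $\h 0\in\partial\mathcal{L}_{\rm box}(\tilde u^*,\tilde{\h h}^*;\tilde{\h b}_2^*)$, so $(\tilde u^*,\tilde{\h h}^*)$ is a critical point of \eqref{eq:grad_uncon_l1l2_box}, completing the proof.
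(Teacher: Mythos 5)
Your proposal is correct and matches the route the paper intends: the paper gives no explicit proof of this theorem, stating only that it follows ``similarly'' from \Cref{theo2inr} once the feasibility requirement $\tilde u^{(k+1)}\in[c,d]$ replaces Assumption A2 (exactly the same reduction used in passing from \Cref{theo2} to \Cref{theo:box}), and your write-up fills in precisely those steps, including the vanishing of the indicator along feasible iterates and the closed-graph argument for the normal cone at the limit. The only caveat you inherit directly from the paper's own \Cref{theo2inr} proof is the cross-term bound $2\|\tilde u^{(k+1)}-\tilde u^{(k)}\|\,\|u^{(k+1)}-\tilde u^{(k+1)}\|\le 4\hat M\sqrt{\varepsilon_{k+1}}$, which strictly requires $\sum_k\sqrt{\varepsilon_k}<\infty$ rather than $\sum_k\varepsilon_k<\infty$; this is a flaw in the paper's argument, not one you introduced.
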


\begin{remark}
 Theorems \ref{theo2}-\ref{theo3inr} are about subsequential convergence, which is weaker than global convergence, i.e., the entire sequence converges. If  the augmented Lagrangian $\mathcal L$ has the Kurdyka-\L ojasiewicz (KL) property \cite{bolte2007lojasiewicz}, global convergence can be shown in a similar way as \cite[Theorem 3.1]{guo2017convergence}. Unfortunately, the KL property is an open problem for the $L_1/L_2$ functional. On the other hand, it is true that \Cref{theo2inr,theo3inr} relax the accuracy of solving the $u$-subproblem within the tolerance $\varepsilon_k$ at the $k$-th iteration, but in practice we solve for a fixed number of iterations, under which the convergence remains open in the optimization literature.
    \end{remark}






	\section{Experimental results}
	\label{sec:experiments}

		\begin{figure}[t]
		\begin{center}
			\begin{tabular}{ccc}
				\includegraphics[width=0.22\textwidth]{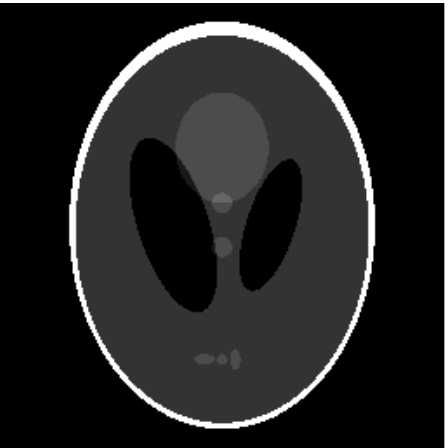} &
				\includegraphics[width=0.22\textwidth]{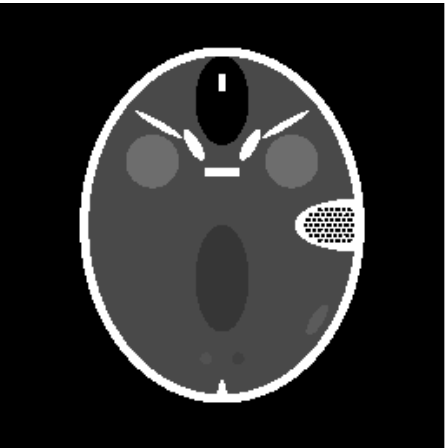} &
				\includegraphics[width=0.22\textwidth]{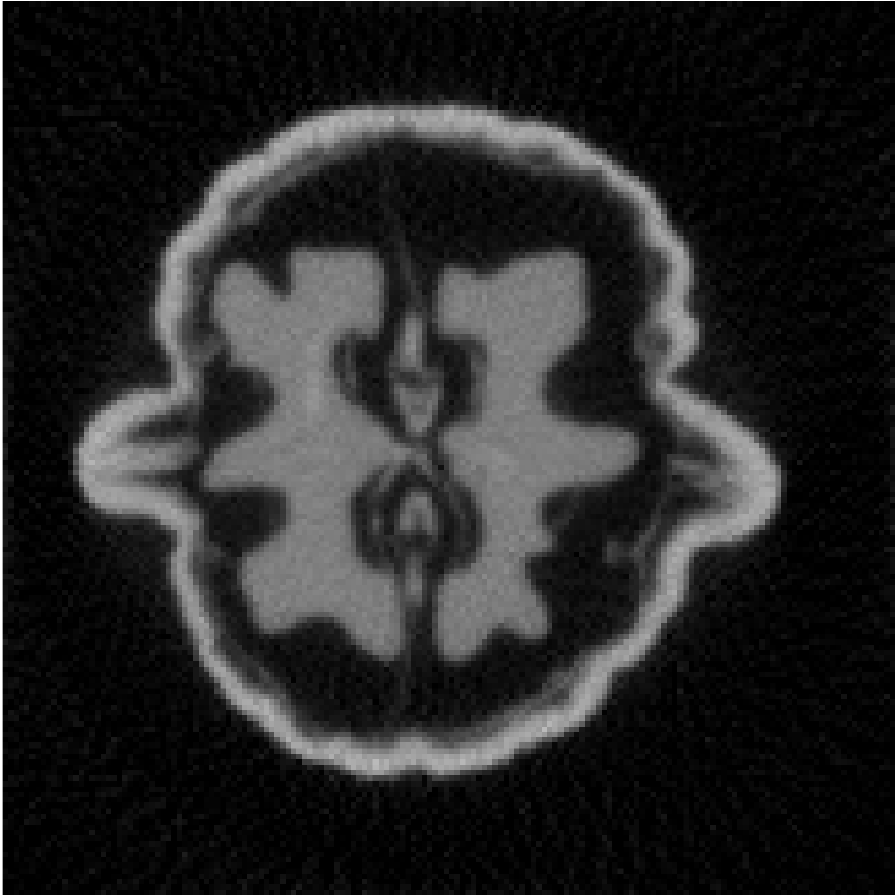} 
				\includegraphics[width=0.22\textwidth]{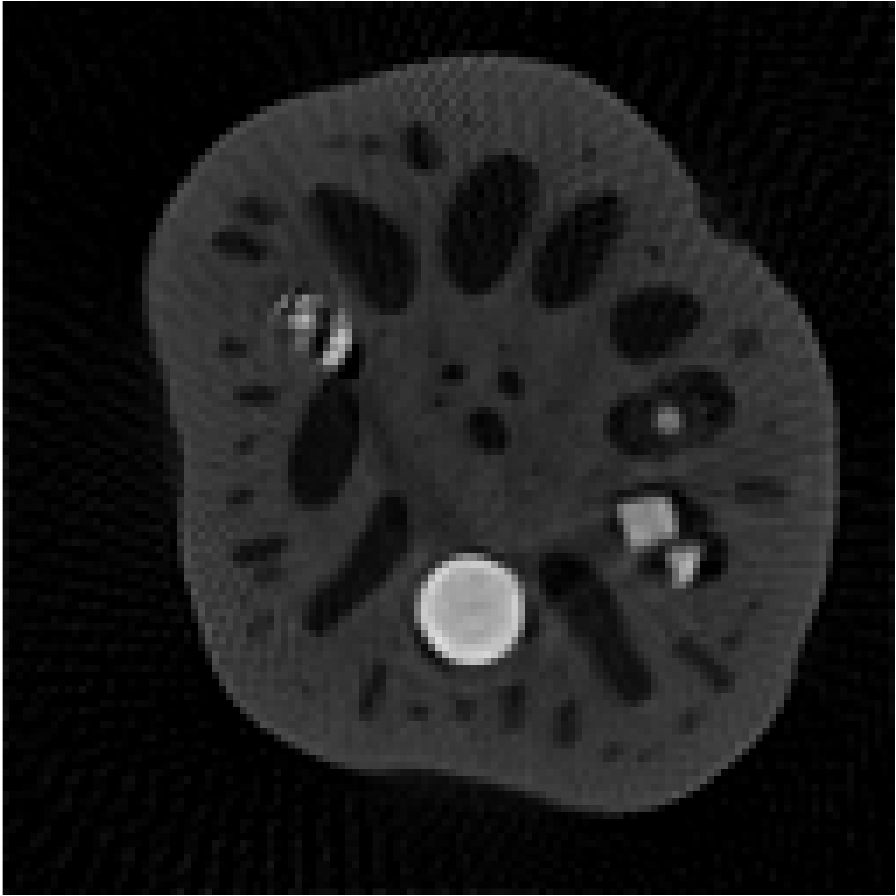} 
			\end{tabular}
		\end{center}
		\caption{Ground truth of Shepp-Logan (SL) phantom and FORBILD (FB) head phantom with the gray scale window  of $[0, \ 1]$ and  $[1.03, \   1.10], $ respectively. The last two  are reference images of a walnut and a lotus  reconstructed by using the complete projection data with the gray scale window  of $[0,\ 0.6]$.} \label{fig:SL_FB}
	\end{figure}
	
	\begin{figure}
	    \centering
	    	\begin{tabular}{cc}
	    	\includegraphics[width=0.3\textwidth]{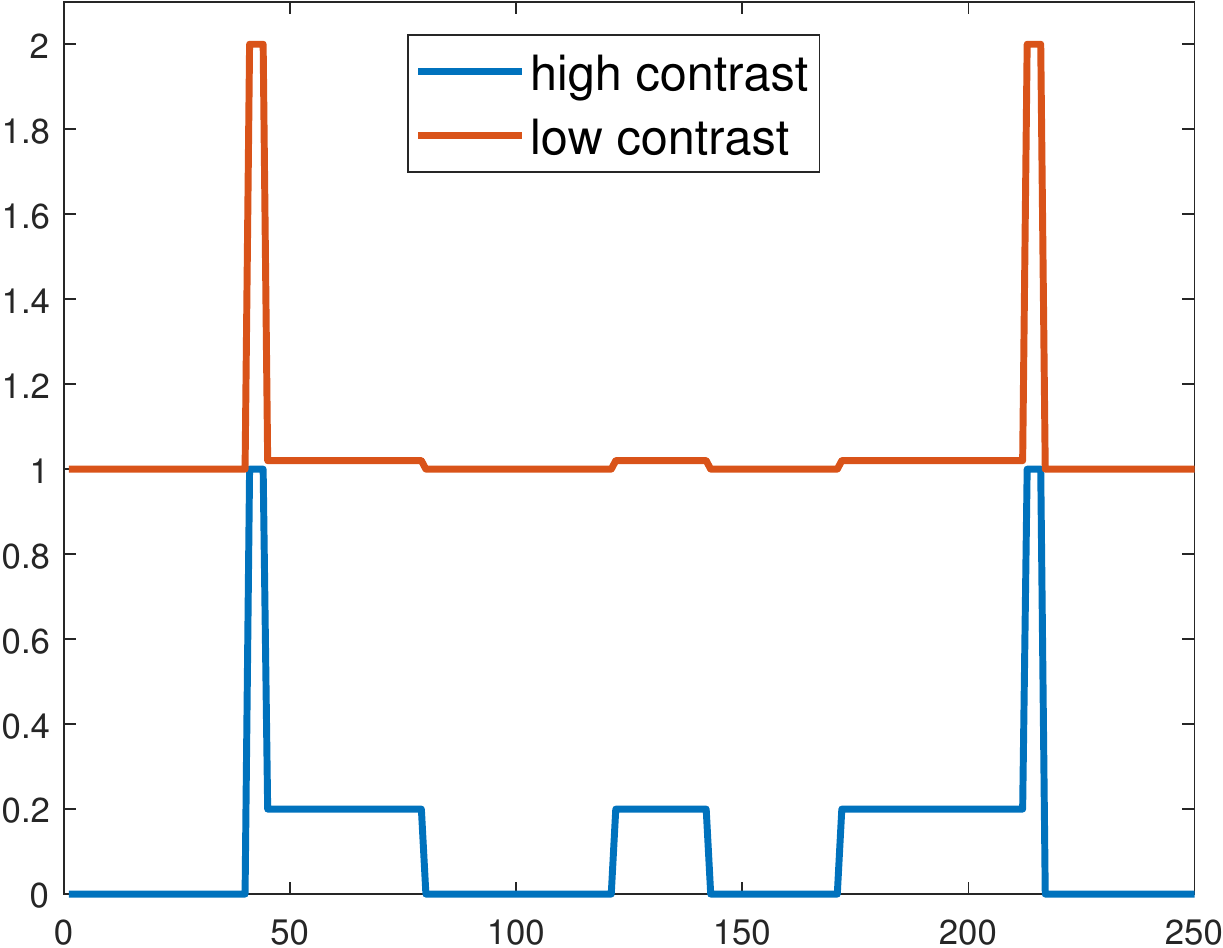} &
	    \includegraphics[width=0.3\textwidth]{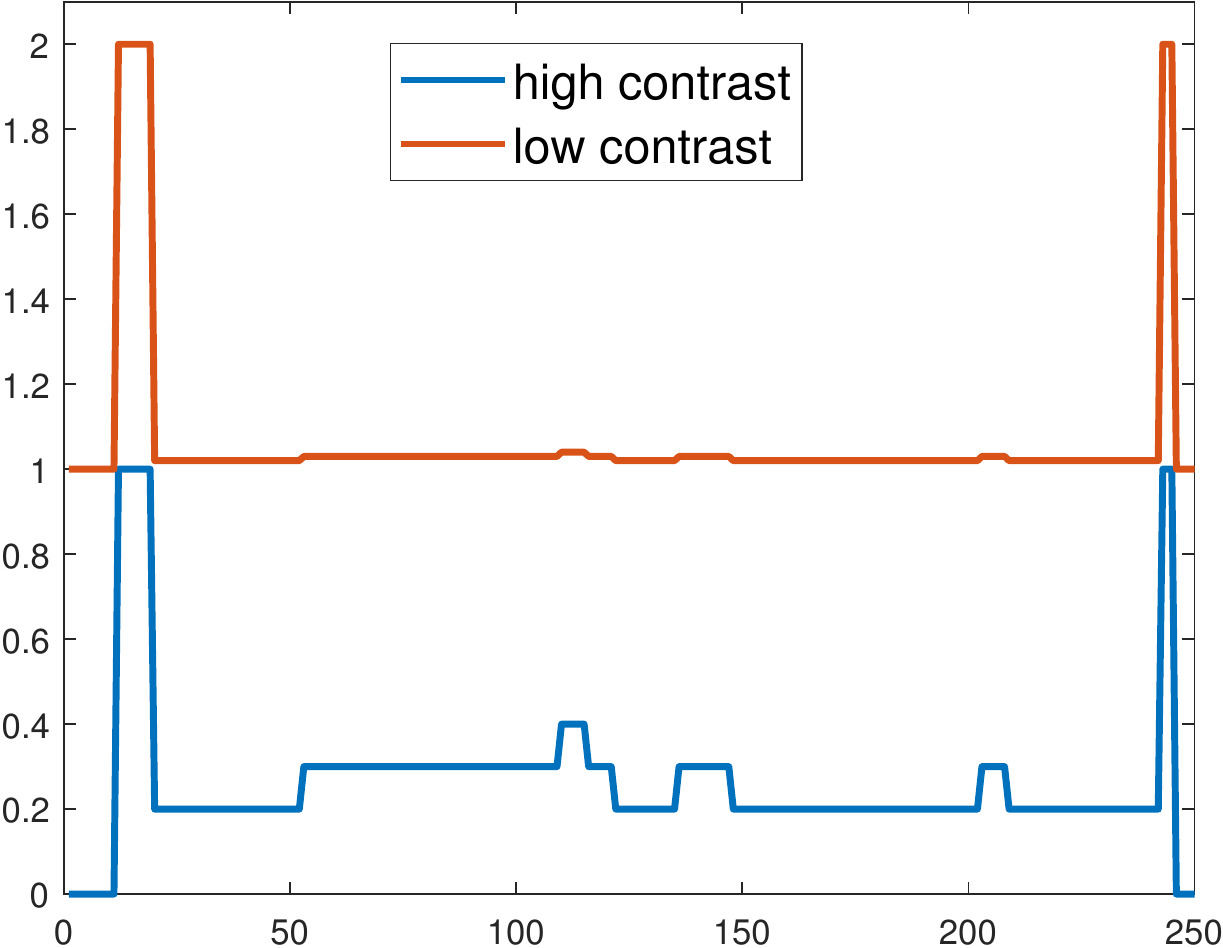}
	    			\end{tabular}
	    \caption{\tr{Horizontal (left) and vertical (right) profiles of two SL phantoms with low contrast and high contrast, the latter of which is used in this paper. }}
	    \label{fig:HV_SL}
	\end{figure}
	
We carry out extensive experiments to demonstrate the performance of the proposed approaches in comparison to the state-of-the-art. We test on two  phantoms of Shepp-Logan  (SL) by the MATLAB command {\tt phantom} and FORBILD (FB) \cite{FB_ph} as well as two experimental datasets of a walnut \cite{walnut_ct} and a lotus \cite{lotusdata}, all shown in \Cref{fig:SL_FB}. \tr{Note that the MATLAB SL phantom has a higher contrast than  the original one presented in \cite{SL_ph}, as illustrated in  \cref{fig:HV_SL} by the horizontal and vertical profiles.}	The reference images for the experimental data are reconstructed from  a complete scanning via the  Tikhonov regularization (MATLAB function is provided in \cite{lotusdata,walnut_ct}).
As the FB phantom has a very low image contrast, we display it with the grayscale window of [1.03, 1.10] in order  to reveal its structures.  Using the SL phantom, we  discuss some computational aspects of the proposed algorithms in \Cref{sect:exp_alg}. We then present numerical results on synthetic data (SL and FB) in \Cref{sect:exp_syn} and  experimental data (walnut and lotus) in \Cref{sect:exp_real}. 
	All the numerical experiments are conducted on a  desktop with CPU (Intel i7-5930K, 3.50 GHz) and MATLAB 9.7 (R2019b). 
	

	To synthesize the limited-angle CT projection data, we discretize both SL and FB phantoms  at a resolution of $256\times256$. The forward operator $A$ is generated as the discrete Radon transform with the same resolution as the digital phantoms. We  use the IR and AIR toolbox \cite{Nagy_toolIR,hansen2012air} to simulate  \textit{parallel beam} and \textit{fan beam} for the CT scanning. Both settings are sampled at $ \theta_{\rm{Max}}/30$ over a range of   $\theta_{\rm{Max}},$ resulting in a sub-sampled data of size $362\times31$. 	We use the same number of projections when we vary ranges of projection angles.  
Note that complete scanning ranges for parallel beam and fan beam are  $180^\circ$ and $360^\circ,$ respectively. Therefore, fan beam is more challenging  to reconstruct than parallel beam with the same value of $\theta_{\rm{Max}}$.   
 We then add either Gaussian noise or Poisson noise to the projected data. The  Gaussian noise follows the zero mean Gaussian distribution with a standard deviation set by a noise level multiplying the maximum intensity of the projected data. We consider two Gaussian noise levels: 0.5\% and 0.1\%. \tr{A more realistic noise distribution for CT data is that the data has the Poisson distribution with the mean $I_0\exp(-f)$,
 where 
 $I_0$ denotes 	the number of incident x-ray photons and 
 $f$ is the noise-free sinogram. } We consider two Poisson noise levels: $I_0=10^4$ and $10^5$. The larger the value of $I_0$ is, the higher signal-to-noise ratio is for the measured data. 




	We evaluate the performance in terms of the root mean squared error (RMSE) and  the overall structural similarity index (SSIM) \cite{wangBSS04}. RMSE is defined as 
	\begin{equation*}
		\text{RMSE}(u^\ast,\tilde{u}):= \frac{\|u^\ast-\tilde{u}\|_2}{N_{\rm pixel}},
	\end{equation*} 
	where $u^\ast$ is the restored image, $\tilde{u}$ is the ground truth, and $N_{\rm pixel}$ is the total number of pixels.  SSIM is the mean of local  similarity indices, 
	\begin{equation*}
		\text{SSIM}(u^\ast,\tilde{u}):= \frac{1}{N} \sum_{i=1}^{N} \text{ssim}(x_i,y_i),
	\end{equation*} 
	where  $x_i, y_i$ correspond to the $i$-th $8\times 8$  windows for $u^\ast$ and $\tilde u$, respectively, and $N$ is the number of such windows.  Note that $N\neq N_{pixel}$, if we do not consider zero-padded pixels along the edges. The local similarity index is defined as 
	\begin{equation*}
		\text{ssim}(x,y):= \frac{(2\mu_x\mu_y+c_1)(2\sigma_{xy} +c_2)}{(\mu_x^2+\mu_y^2+c_1)(\sigma^2_x+\sigma_y^2+c_2)}, 
	\end{equation*}
	where the averages/variances of $x,y$ are denoted as $\mu_x/\sigma_x^2$ and $\mu_y/\sigma_y^2$, respectively. 
	 Here, $c_1$ and $c_2$ are two fixed constants to stabilize the division with weak denominator, which are set to be $c_1=c_2=0.05. $


		We compare the proposed $L_1/L_2$ model  with a clinical standard approach of SART \cite{SART}, the TV model \eqref{eq:grad_con_l1},  referred to as $L_1$, as well as two  nonconvex regularizations: $L_p$ for $p=0.5$  and $L_1$-$L_2$  \cite{louZOX15} on the gradient. To solve for the $L_p$ model, we replace the soft shrinkage   by the proximal operator corresponding to $L_p,$ derived in \cite{Xu2012} and apply the same ADMM framework as the $L_1$ minimization. As for $L_1$-$L_2,$ we modify the MATLAB package provided by the authors \cite{louZOX15} for the CT reconstruction problem. All these regularization methods are solved in a constrained formulation with the same box constraint to make a fair comparison. We pose the box constraints:  $[0, 1]$ for SL and $[0, 1.8]$ for FB, since  we know the upper/lower bounds of the ground-truth images.  As for the experimental images, we set the box constraint as $[0, 0.5],$ which is estimated from the reference images.
The initial condition of $u$ is chosen to be a zero vector for all the methods.
		We set the maximum iteration in the inner loop and outer loop for both $L_1/L_2$ and $L_1$-$L_2$ as 5 and 300, respectively, while the maximum iteration of $L_1$ and $L_p$ is 500.    The (outer) stopping criterion is $\frac{\|u^{(k)}-u^{(k-1)}\|_2}{\|u^{(k)}\|_2}\leq 10^{-5}.$ As for the other parameters in $L_1/L_2,$ we set $\rho_1=\rho_2=\rho$ and find the optimal combination among the candidate set of $\lambda\in\{10^{-3},10^{-2},10^{-1},1 \}$ and $\rho,\beta \in\{0.1, 1, 10\}$ that gives the lowest RMSE. We tune parameters at each noise level for every testing dataset. In a similar way, we  tune the parameters individually for $L_1, $ $L_p$,  and $L_1$-$L_2$.  
		
		\subsection{Algorithm behavior}\label{sect:exp_alg}
		
	\begin{figure}[t]
		\begin{center}
			\begin{tabular}{cc}
				\includegraphics[width=0.4\textwidth]{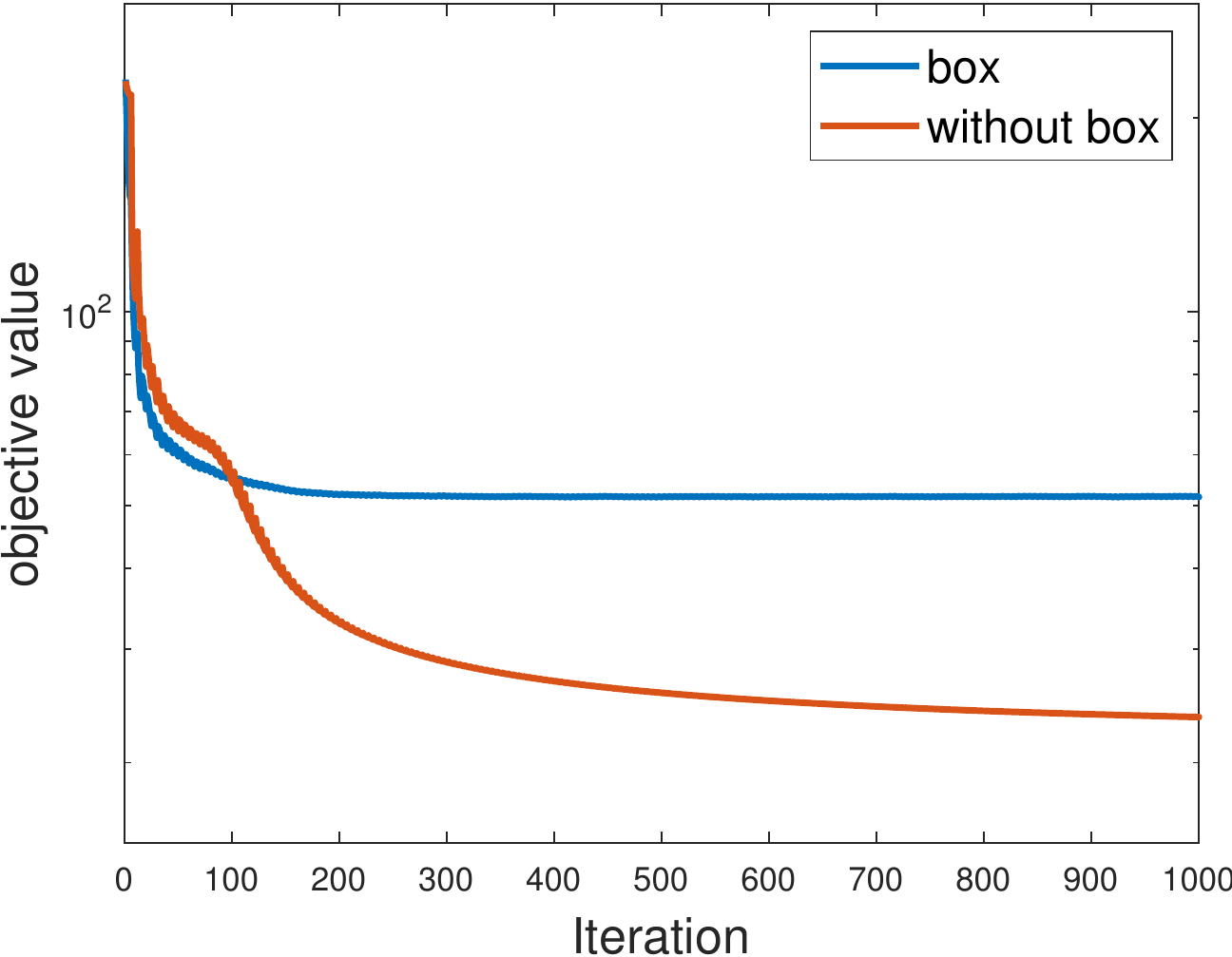}  &
				\includegraphics[width=0.4\textwidth]{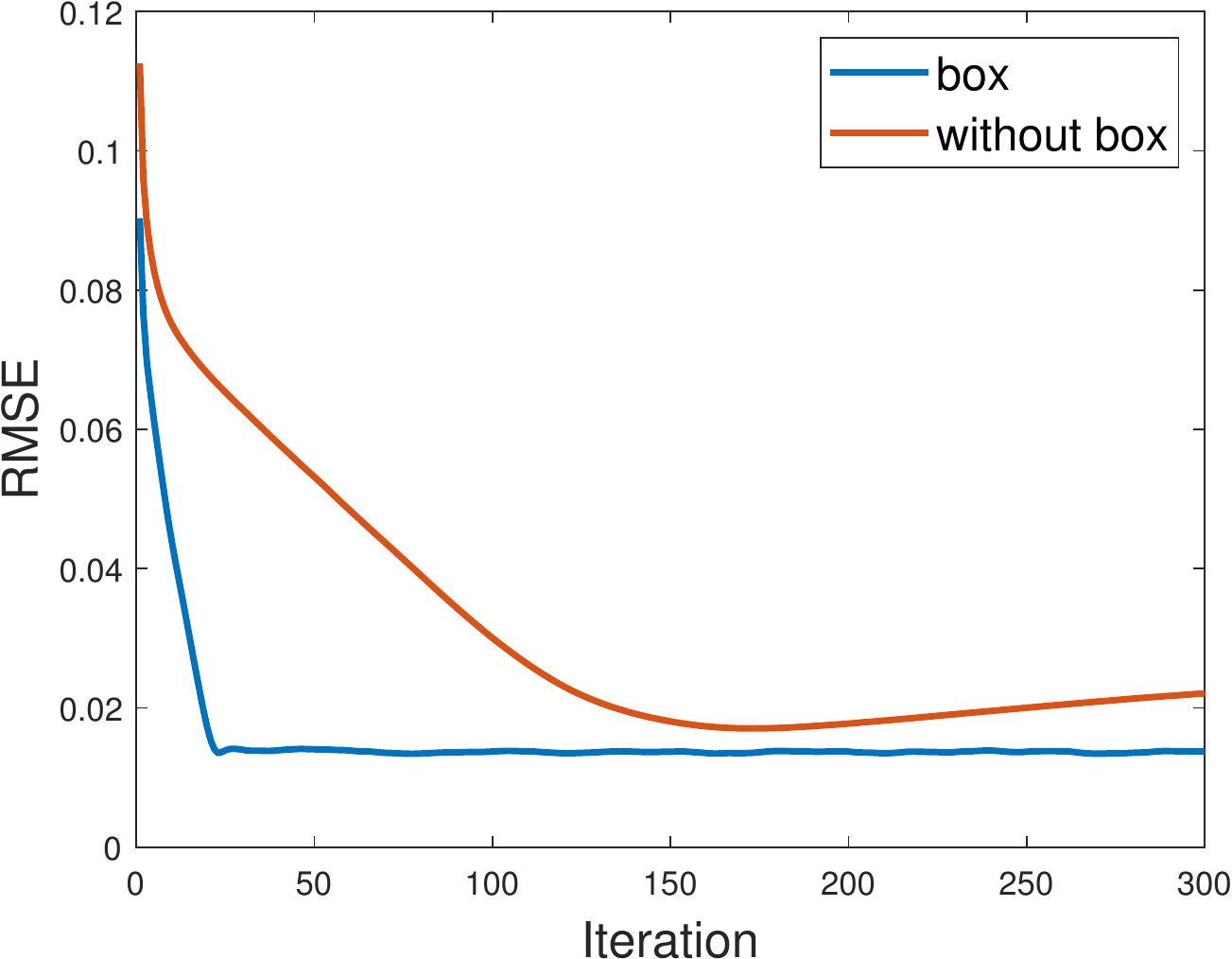} 		\end{tabular}
		\end{center}
		\caption{The effects of the box constraint in terms of the objective value (left) and RMSE (right). }\label{fig:box}
	\end{figure}

	In this section, we discuss computational  aspects of the proposed algorithms.	We first analyze the influence of the box constraint  on the reconstruction results. The analysis is based on the SL phantom from parallel beam CT projection data with the scanning range of  $135^\circ$ subject to Gaussian noise of 0.5\%.
	The fidelity of the CT reconstruction and the convergence are assessed in terms of objective values and RMSE$(u^{(k)}, \tilde u)$ versus outer iteration counter  $k$. 
	In  \Cref{fig:box}, we present algorithmic behaviors of the box constraint on the unconstrained model. 
Here we set jMax to be 5 (we will discuss the effects of inner iteration number shortly.) We plot both inner and outer iterations in \Cref{fig:box}, showing that the proposed algorithms with and without the box constraint are convergent, as the objective functions decrease.
	On the other hand, the box constraint yields smaller RMSE  compared to the one without box. Moreover, the box constraint helps to avoid  local minimizers, as the RMSE of the algorithm without box increases and the objective function keeps going down.  Therefore, the box constraint plays an important role in the success of our approach for the CT reconstruction.

			\begin{figure}[t]
		\begin{center}
			\begin{tabular}{cc}
				\includegraphics[width=0.4\textwidth]{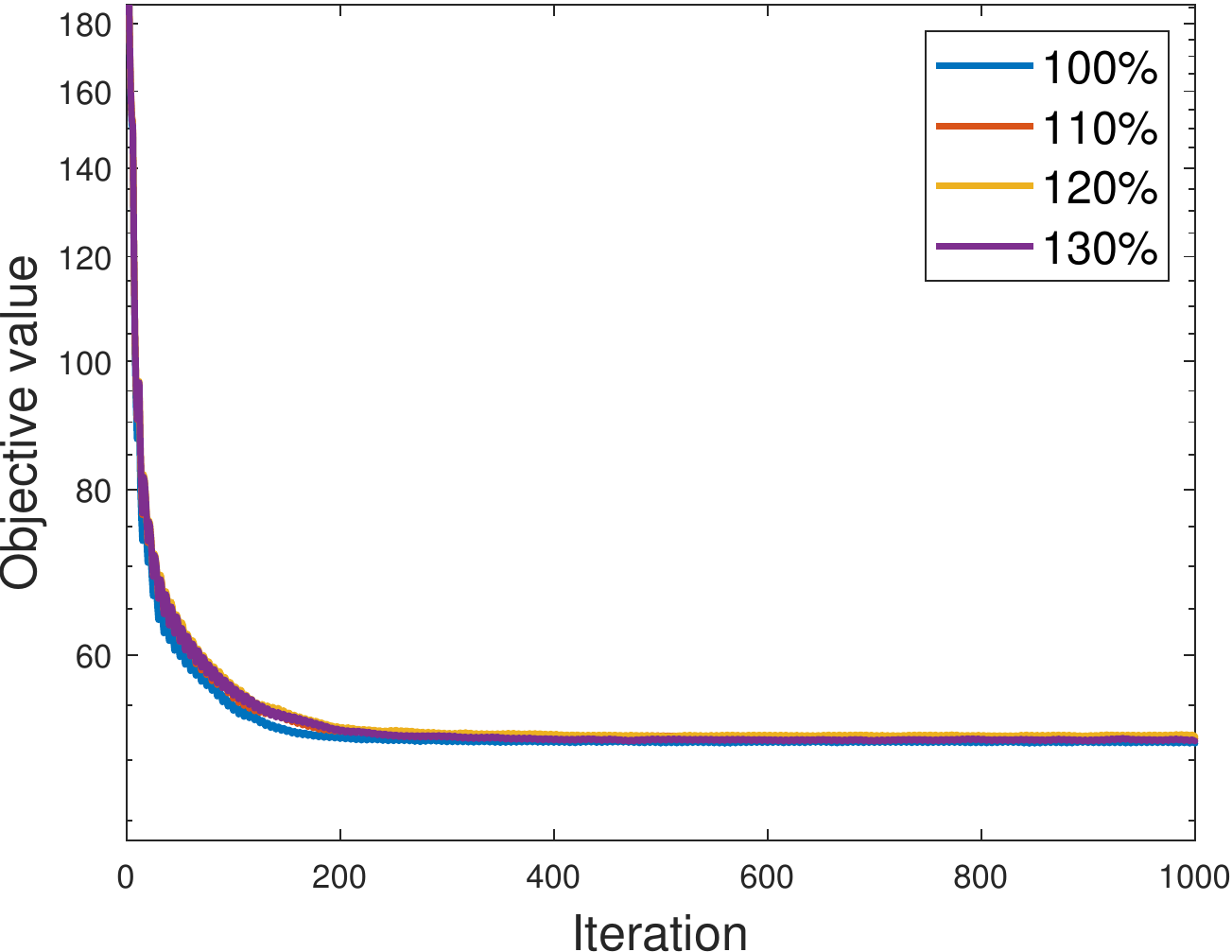}  &
				\includegraphics[width=0.4\textwidth]{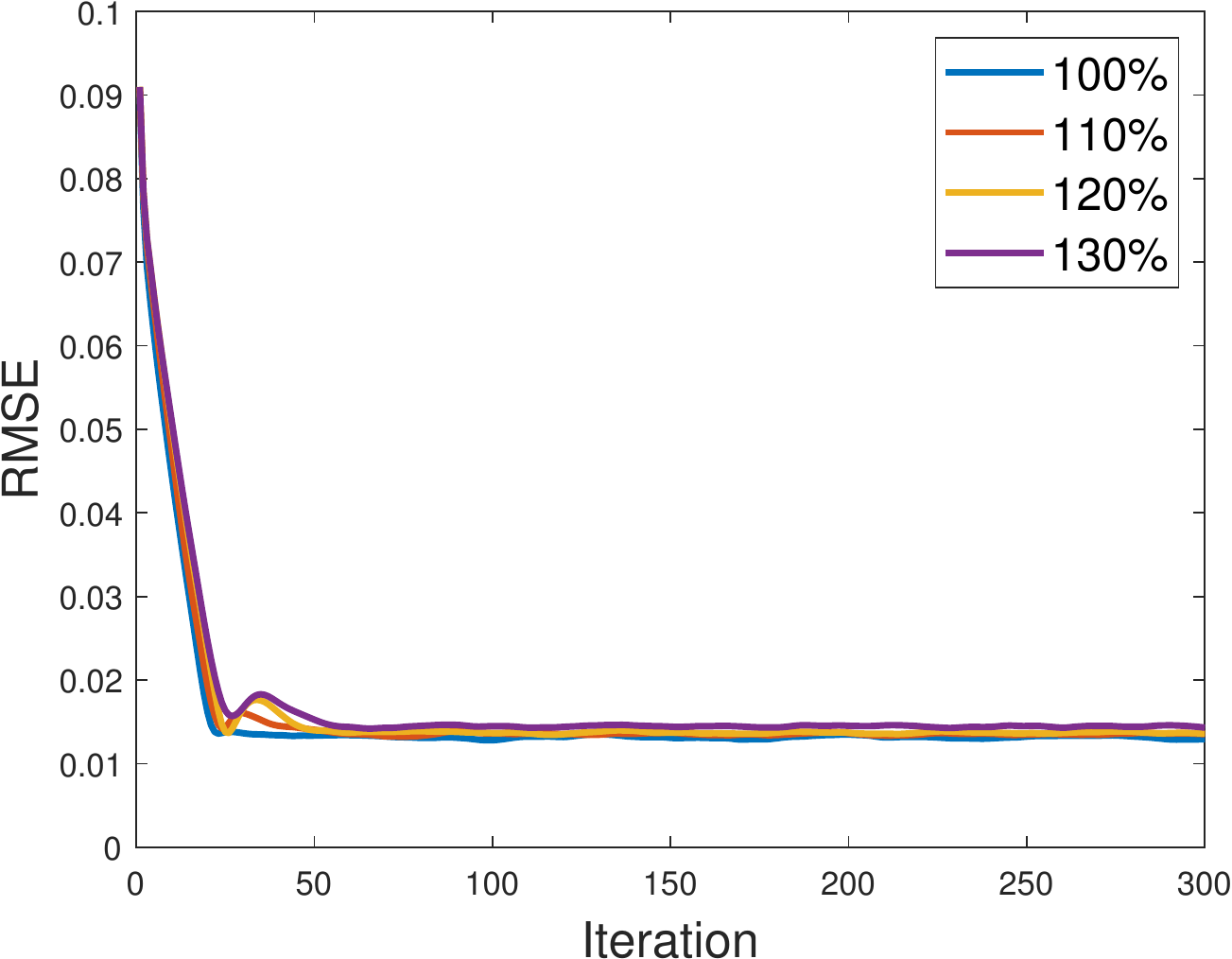} 		\end{tabular}
		\end{center}
		\caption{The effects of the upper bound of the box constraint in terms of the objective value (left) and RMSE (right).} \label{fig:box_upper}
	\end{figure}

	We then discuss the effect of upper bound of the box constraint, i.e., $d$, on the CT reconstruction performance. Again, we consider the SL phantom from paralleled beam CT projection with the scanning range of $135^\circ$ subject to a noise level of $0.5\%$. We compare the oracle upper bound (100\%) with relaxed bounds (110\%, 120\%, and 130\%). In \Cref{fig:box_upper}, we plot the objective values and RMSE with respect to iteration numbers. All the curves of objective values in different $d$ are almost the same, while the RMSE shows the accuracy is  slightly different. \Cref{fig:box_upper} demonstrates that the proposed method is  insensitive to the upper bound of the box constraint.

Finally,	we  discuss the influence of jMax on the sparse recovery performance. 
	Fixing the maximum outer iterations as 300, we examine the results of jMax$= 1,3,5,$ and 10.
	In \Cref{fig:effect_inner}, we plot the objective values and RMSE with respect to iterations (counting both inner and outer loops).   The objective function with only one inner iteration does not decrease as much as the ones with more inner iterations. RMSE reaches a lower value by fewer outer iterations when using larger jMax. 
	 Following \Cref{fig:effect_inner}, 
	we set  jMax to be  5 throughout the experiments. 
	
	\begin{figure}[t]
		\begin{center}
			\begin{tabular}{cc}
				\includegraphics[width=0.4\textwidth]{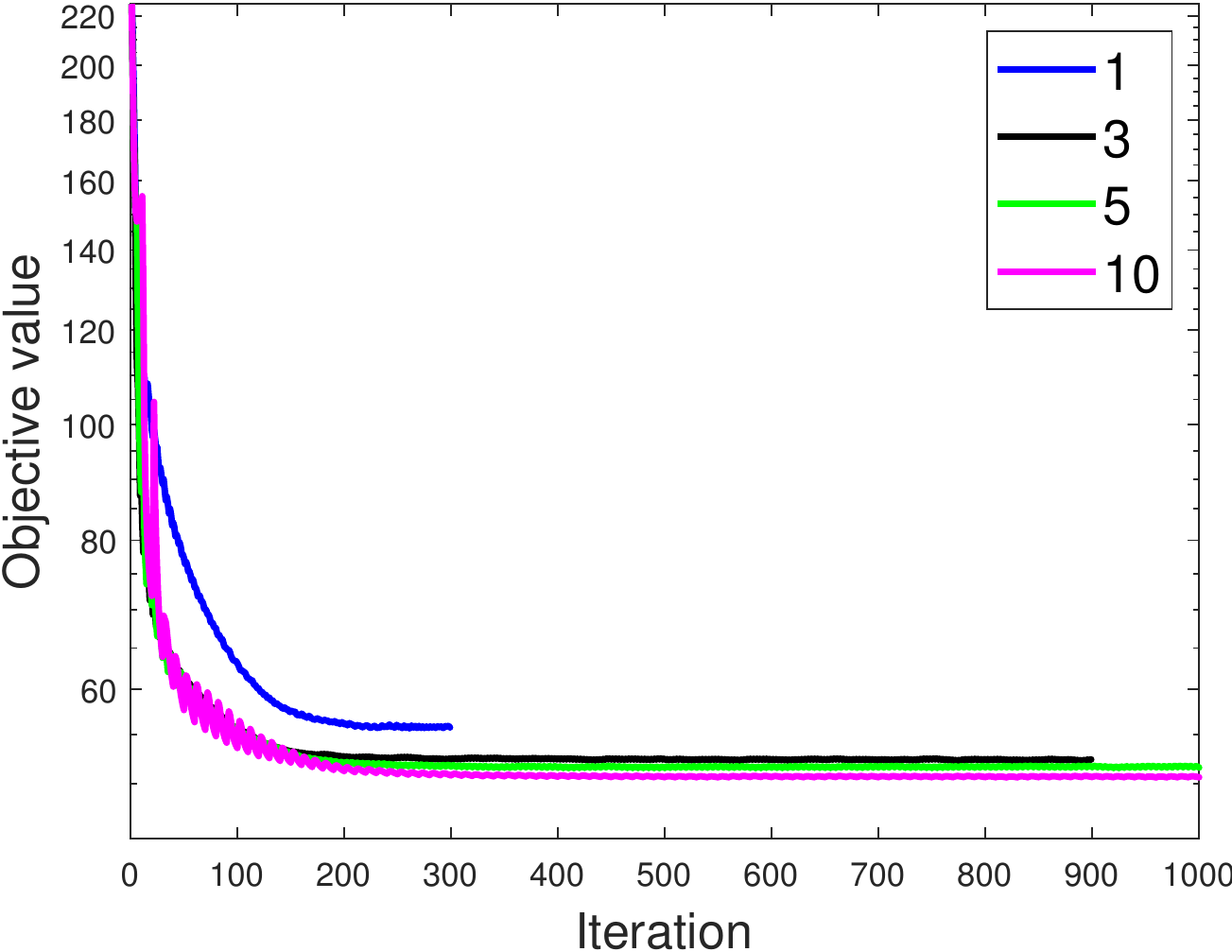}  &
				\includegraphics[width=0.4\textwidth]{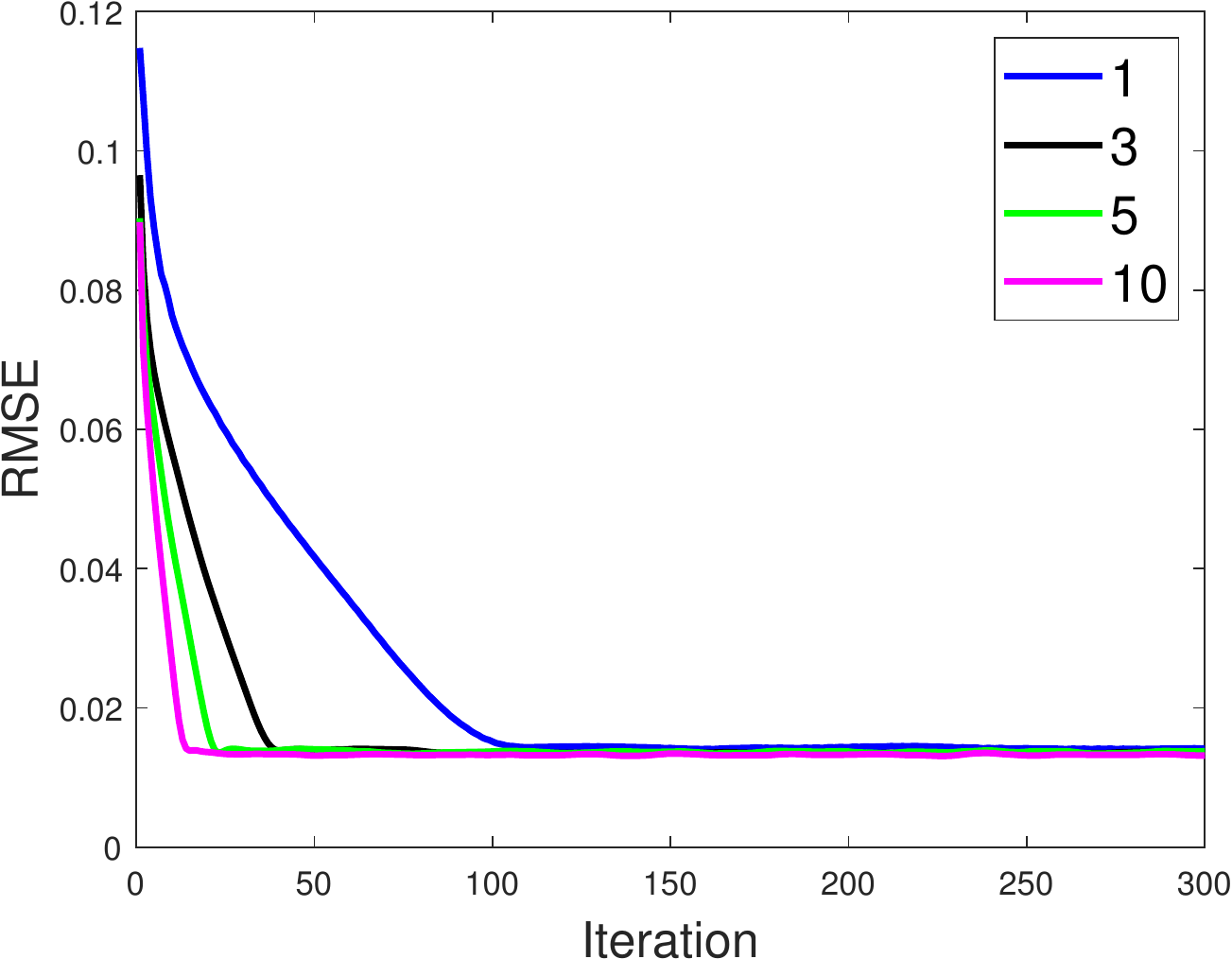} 
			\end{tabular}
		\end{center}
		\caption{The effects of  the maximum number \tr{of} the inner loops in terms of the   objective value (left) and RMSE (right). }\label{fig:effect_inner}
	\end{figure}


\subsection{Synthetic dataset}\label{sect:exp_syn}

We start with the parallel beam CT reconstruction of the SL phantom from  $90^\circ$ and $150^\circ$ projection range,   labelled by SL-$90^\circ$/SL-$150^\circ$,  with $0.5\%$ Gaussian noise. The quantitative results in terms of SSIM and RMSE are reported  in \Cref{Tab:noise_SL}. 
Visually in  \Cref{fig:SL150}, SART fails to recover the ellipse shape of the skull with such small ranges of projection angles.
Both $L_1$ and $L_1$-$L_2$ models are unable to restore the bottom skull and preserve details of some ellipses in the middle. \tr{The $L_p$ model leads to a nearly perfect reconstruction of the skull,} but containing a lot of salt-and-pepper artifacts inside the brain. The proposed $L_1/L_2$ method yields a reasonable recovery in a balanced manner. In the case of SL-$150^\circ$,  $L_p$ is superior over the other approaches, while the propose method is the second best.
This outcome is consistent with the CS literature \cite{yinLHX14} that  $L_p$ performs quite well for the incoherent problem, which is corresponding to a larger scanning angle in the CT reconstruction. The performance of $L_p$ decays for narrow scanning ranges, as reported in \Cref{Tab:noise_SL}.
	\begin{figure}[t]
		\begin{center}
			\begin{tabular}{ccccc}
				SART & $L_1$ & $L_p$ & $L_1$-$L_2$ & $L_1/L_2$\\
				\includegraphics[width=0.15\textwidth]{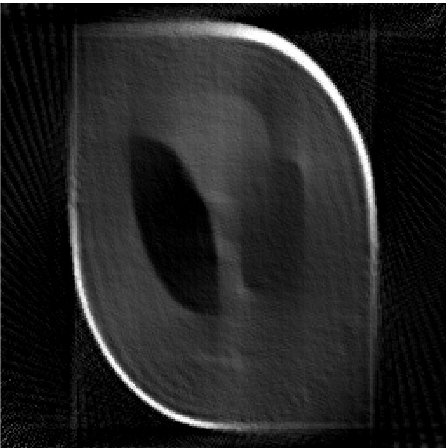} &
				\includegraphics[width=0.15\textwidth]{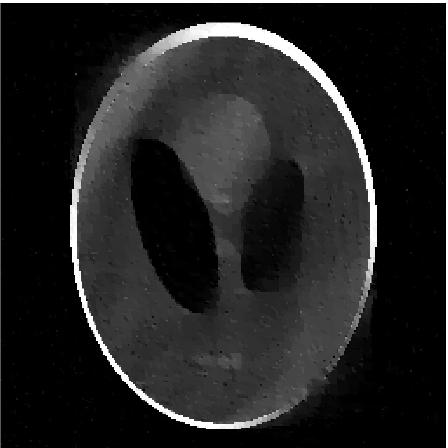} &
				\includegraphics[width=0.15\textwidth]{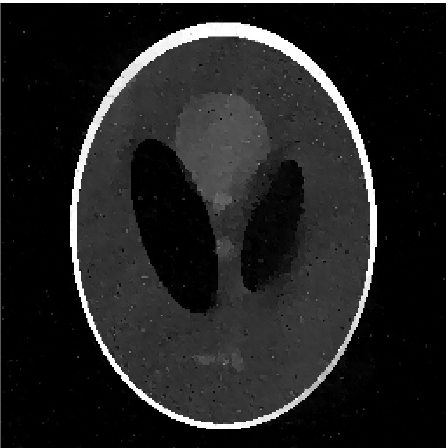} &
				\includegraphics[width=0.15\textwidth]{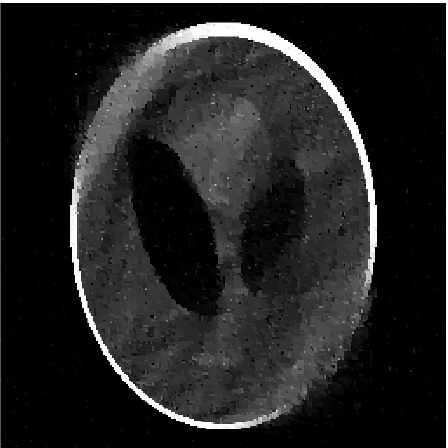}  &
				\includegraphics[width=0.15\textwidth]{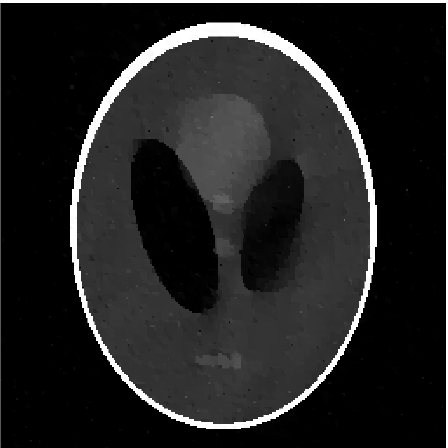}  \\
				\includegraphics[width=0.15\textwidth]{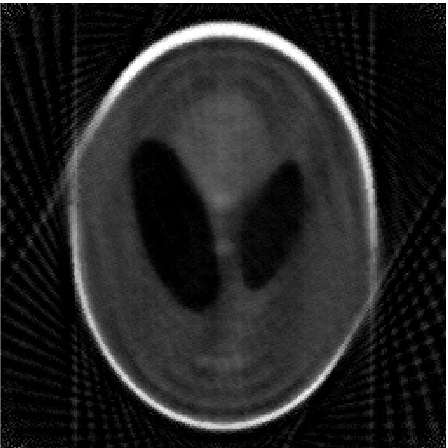} &
				\includegraphics[width=0.15\textwidth]{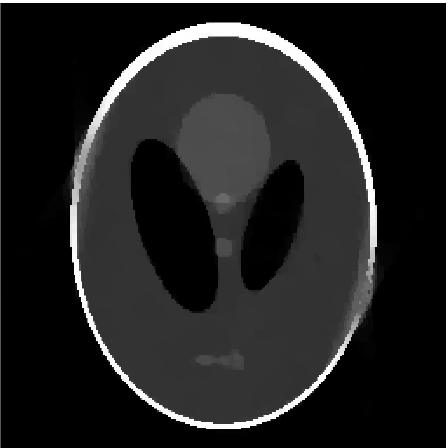} &
				\includegraphics[width=0.15\textwidth]{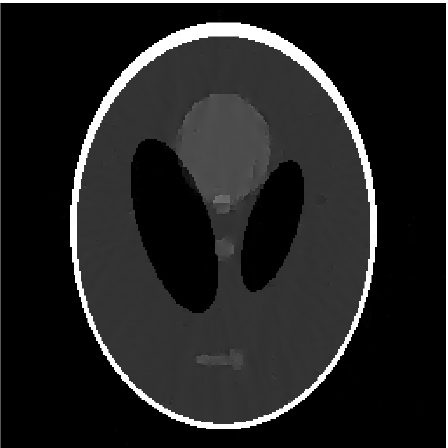} &
				\includegraphics[width=0.15\textwidth]{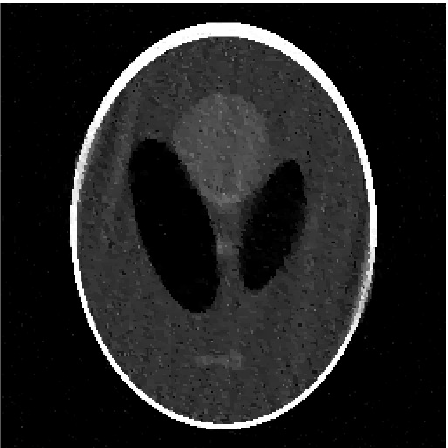}  &
				\includegraphics[width=0.15\textwidth]{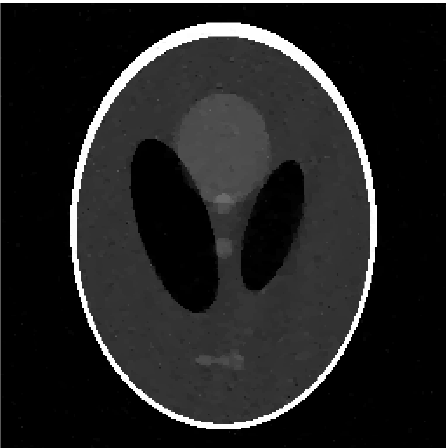}  
			\end{tabular}
		\end{center}
		\caption{CT reconstruction from $90^\circ$ (top) and $150^\circ$ (bottom) parallel beam  projection for the SL phantom with 0.5\% noise. The gray scale window is $[0,1]$. }\label{fig:SL150}
	\end{figure}

			\begin{table}[t]
		\begin{center}
			\scriptsize
			\caption{Parallel beam CT reconstruction of the  SL phantom by SART, $L_1$, $L_p$, $L_1$-$L_2$, and $L_1/L_2$. }
			\begin{tabular}{c|c|cc|cc|cc|cc|cc} 
				\hline 
				\multirow{2}{*}{noise} & \multirow{2}{*}{range} & \multicolumn{2}{c|}{SART}& \multicolumn{2}{c|}{$L_1$ }& \multicolumn{2}{c|}{$L_{p}$ }  & \multicolumn{2}{c|}{$L_1$-$L_2$ } & \multicolumn{2}{c}{$L_1/L_2$ }  \\ \cline{3-12} 
				&  & SSIM &   RMSE & SSIM &   RMSE & SSIM &   RMSE & SSIM &   RMSE & SSIM &   RMSE  \\ \hline
				\multirow{2}{*}{0.5\%}&  $90^{\circ}$ & 0.56 & 0.138 &  0.88 & 0.075 & 0.91  & 0.029 &  0.78  & 0.087 & {\bf 0.96}  & {\bf 0.017}    \\ \cline{2-12} 
				& $150^{\circ}$ & 0.58 & 0.106 &  0.98 & 0.038 & {\bf 0.99}  & {\bf 0.008} &  0.88  & 0.034 & 0.98  & 0.011   
				 \\ \hline	 
				\multirow{2}{*}{0.1\%}&  $90^{\circ}$ & 0.58 & 0.137 &  0.96 & 0.041 & {\bf 1.00} & 0.006 &  0.88  & 0.072 & {\bf 1.00}  & {\bf 0.003}     \\ \cline{2-12} 
				& $150^{\circ}$ & 0.60 & 0.104 &  0.98 & 0.035 & {\bf 1.00}  & 0.005 &  0.99  & 0.076 & {\bf 1.00}  & {\bf 0.001}  
				 \\ \hline
			\end{tabular}\label{Tab:noise_SL}
			\medskip
		\end{center}
	\end{table}

	\begin{figure}[ht]
		\begin{center}
			\begin{tabular}{ccccc}
				SART & $L_1$ & $L_p$ & $L_1$-$L_2$ & $L_1/L_2$\\
				\includegraphics[width=0.15\textwidth]{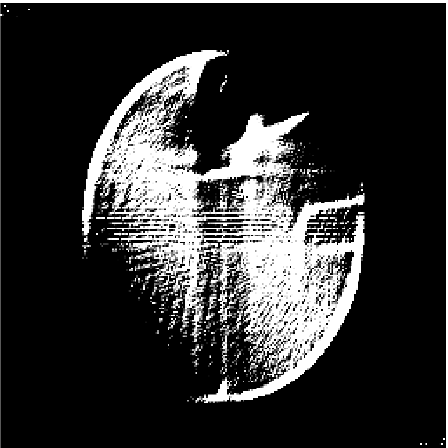} &
				\includegraphics[width=0.15\textwidth]{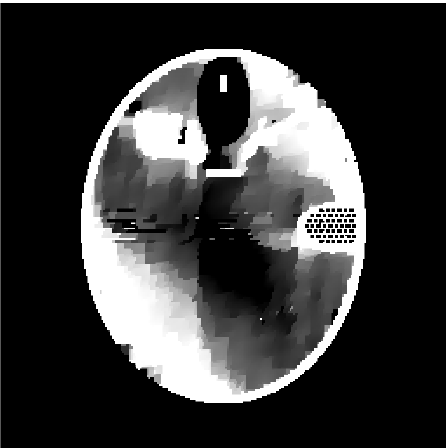} &
				\includegraphics[width=0.15\textwidth]{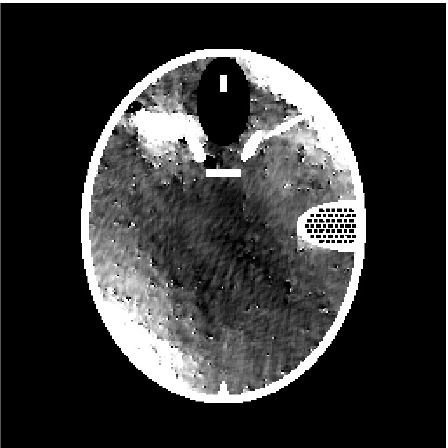} &
				\includegraphics[width=0.15\textwidth]{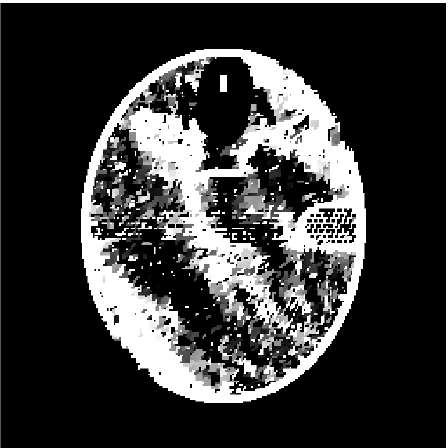}  &
				\includegraphics[width=0.15\textwidth]{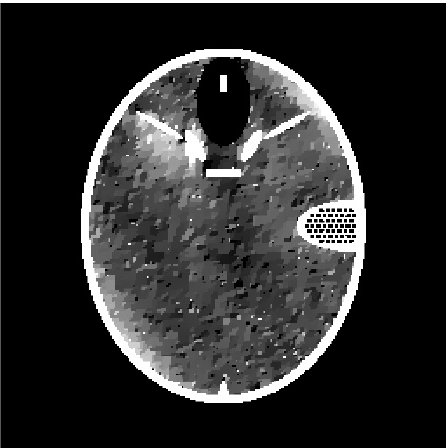}  \\
				\includegraphics[width=0.15\textwidth]{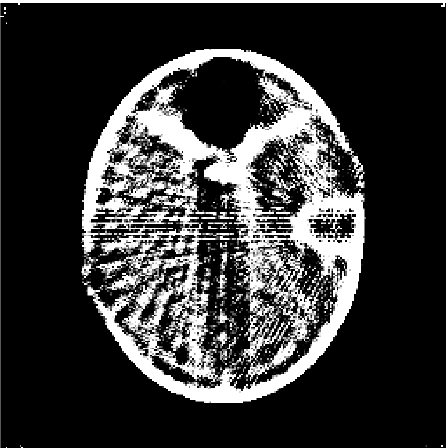} &
				\includegraphics[width=0.15\textwidth]{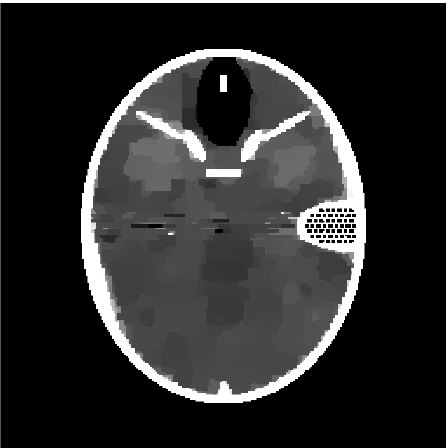} &
				\includegraphics[width=0.15\textwidth]{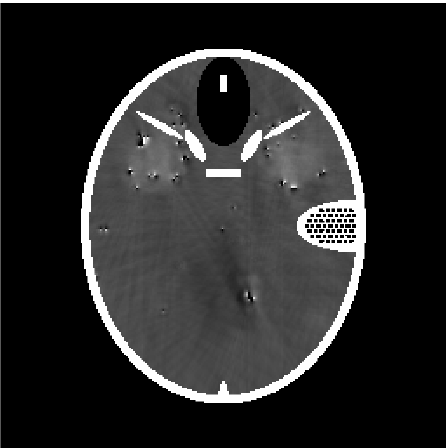} &
				\includegraphics[width=0.15\textwidth]{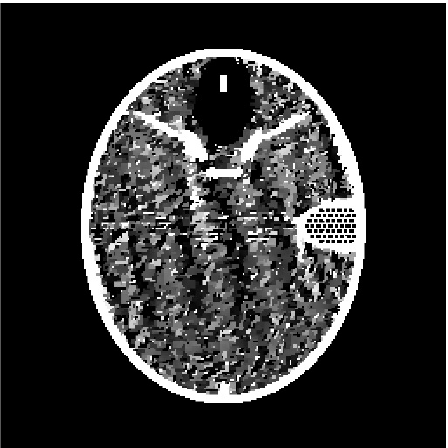}  &
				\includegraphics[width=0.15\textwidth]{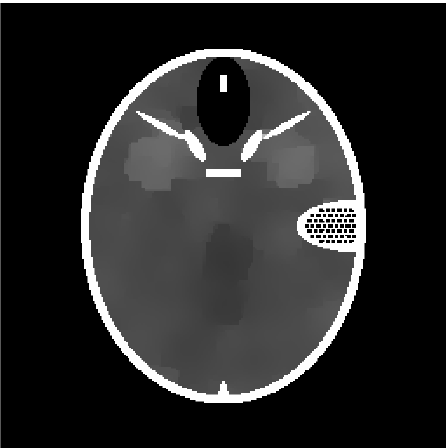}  
			\end{tabular}
		\end{center}
		\caption{CT reconstruction  from $90^\circ$ (top) and $150^\circ$ (bottom) parallel beam projection for the FB phantom with 0.1\% Gaussian  noise. The gray scale window is $[1.03,1.10]$. }\label{fig:FB150}
	\end{figure}
		\begin{figure}[!]
		\begin{center}
			\begin{tabular}{cc}
				\includegraphics[width=0.4\textwidth]{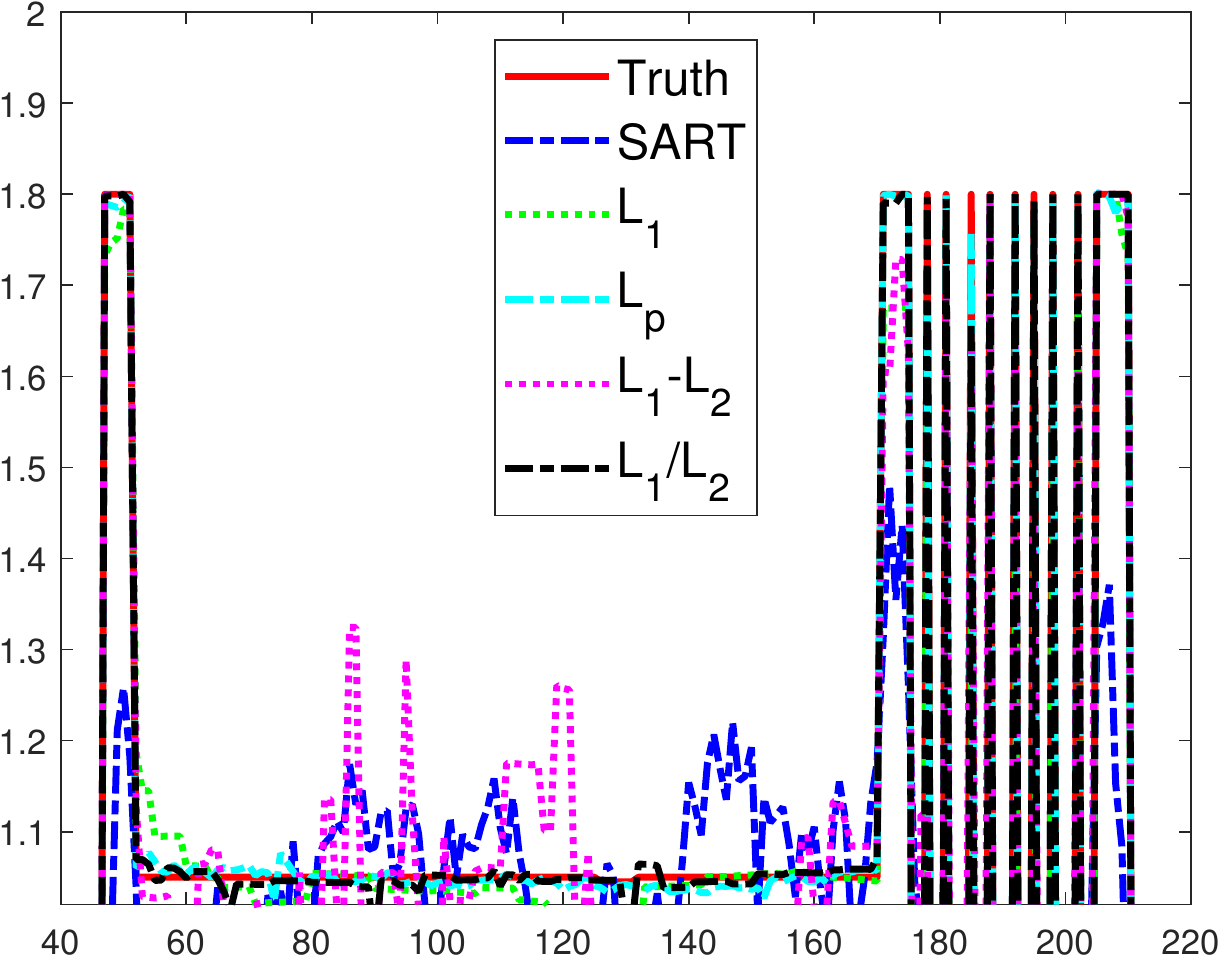} &
				\includegraphics[width=0.4\textwidth]{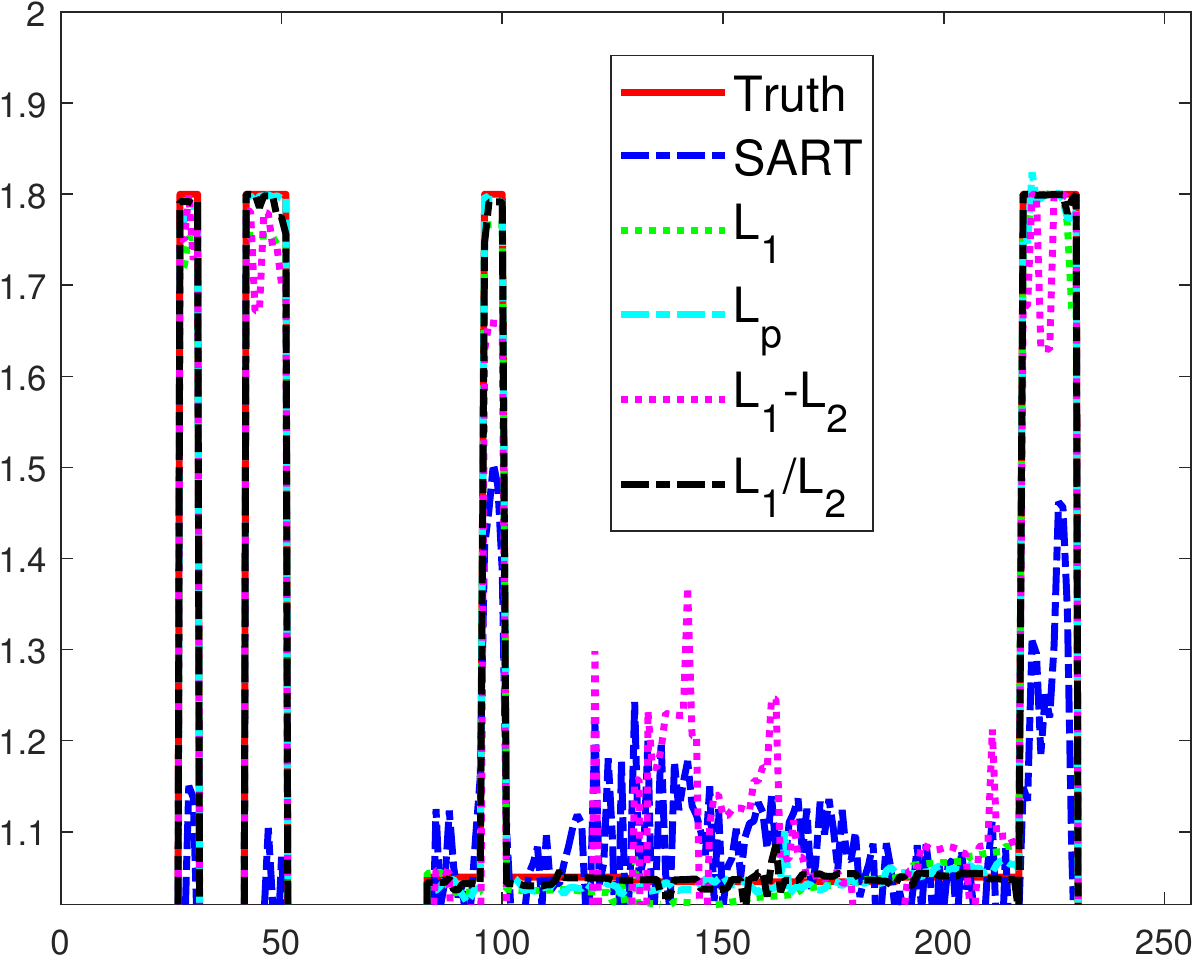} \\
				\includegraphics[width=0.4\textwidth]{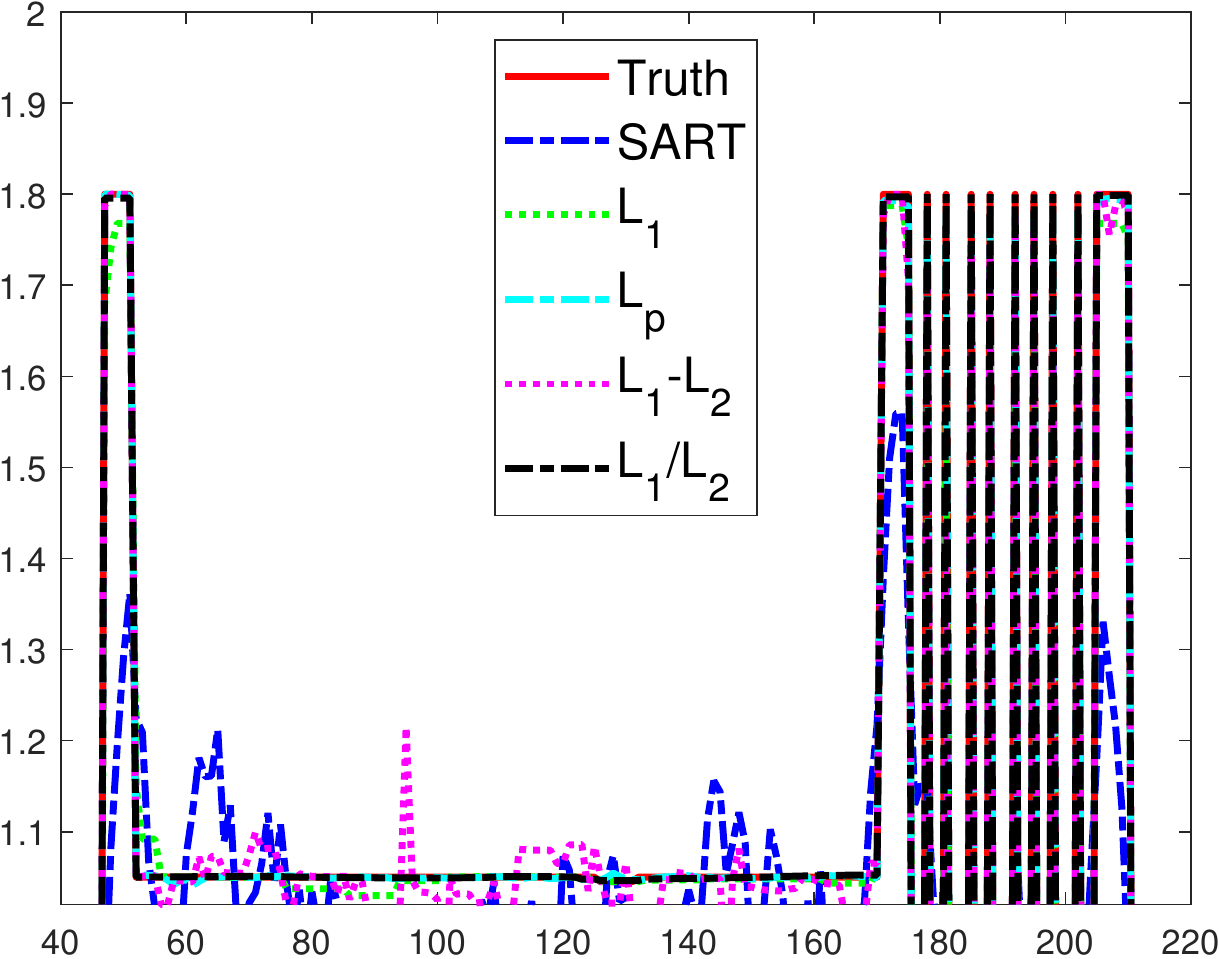} &
				\includegraphics[width=0.4\textwidth]{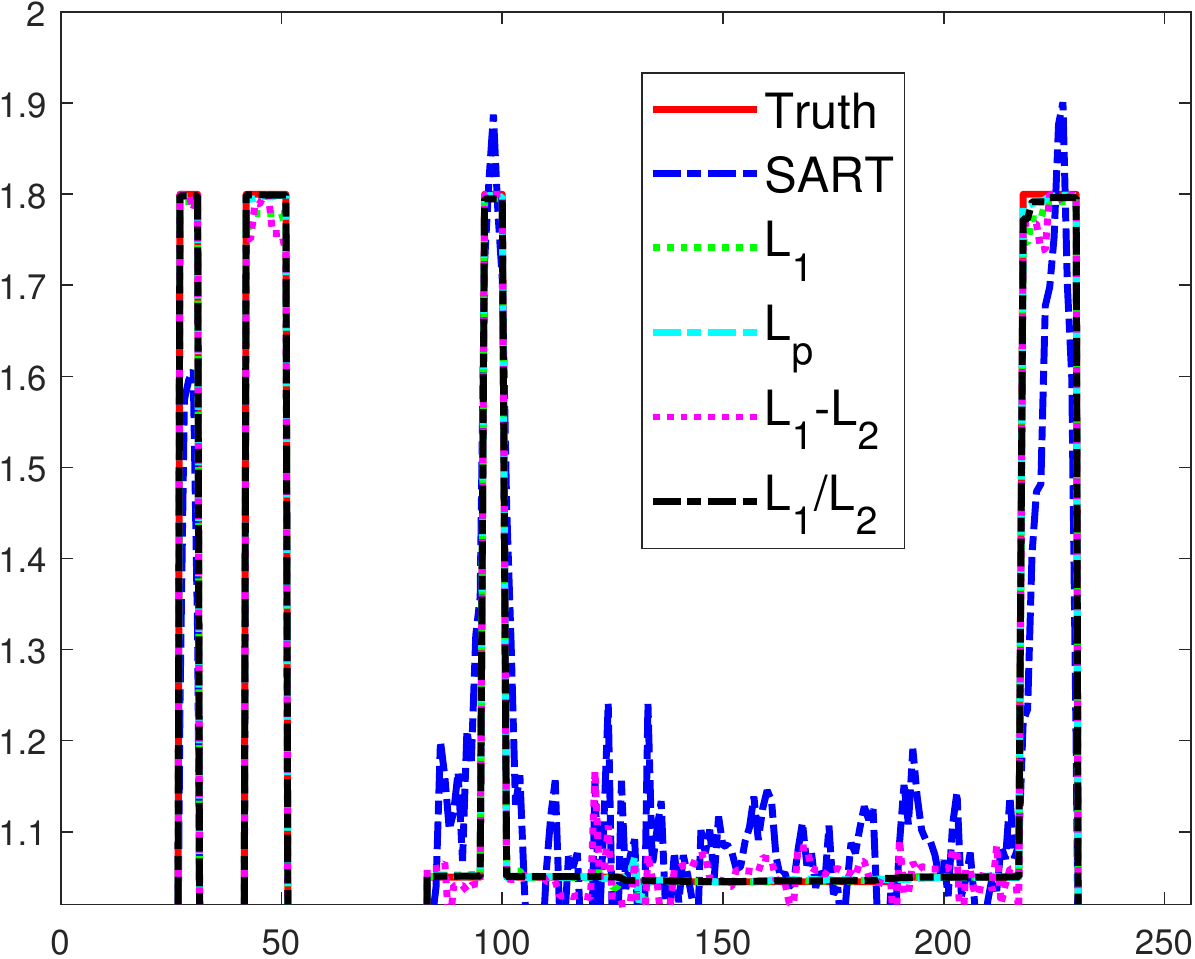} 
			\end{tabular}
		\end{center}
		\caption{Horizontal and vertical profiles generated via SART, $L_1$, $L_p$, $L_1$-$L_2$, and $L_1/L_2$ in the range of projection $90^\circ$ (top) and $150^\circ$ (bottom) for the FB phantom. }\label{fig:FB150_vh}
	\end{figure}
	
	\begin{table}[htp]
		\begin{center}
			\scriptsize
			\caption{Parallel beam CT reconstruction of the  FB phantom by SART, $L_1$, $L_p$, $L_1$-$L_2$, and $L_1/L_2$. }
			\begin{tabular}{c|c|cc|cc|cc|cc|cc}
				\hline
				\multirow{2}{*}{noise} & \multirow{2}{*}{range} & \multicolumn{2}{c|}{SART}& \multicolumn{2}{c|}{$L_1$ } &\multicolumn{2}{c|}{$L_p$ } & \multicolumn{2}{c}{$L_1$-$L_2$ } & \multicolumn{2}{|c}{$L_1/L_2$ }  \\ \cline{3-12} 
				&  & SSIM &   RMSE & SSIM & RMSE & SSIM &  RMSE & SSIM &   RMSE & SSIM &   RMSE  \\ \hline
				\multirow{2}{*}{0.5\%}&  $90^{\circ}$ & 0.26 & 0.275 &  0.82 & 0.135 & 0.77  & 0.101 &  0.65  & 0.169 & {\bf 0.91}  & {\bf 0.080}     \\ \cline{2-12} 
				& $150^{\circ}$ & 0.28 & 0.206 &  0.90 & 0.059 & 0.70  & 0.078 &   0.70  & 0.107 & {\bf 0.95}  & {\bf 0.028}   
				 \\ \hline	 
				\multirow{2}{*}{0.1\%}&  $90^{\circ}$ & 0.30 & 0.266 &  0.93 & 0.101 & 0.97  & 0.049 &  0.78  & 0.123 & {\bf 0.99}  &{\bf 0.012} \\ \cline{2-12} 
				& $150^{\circ}$ & 0.32 & 0.192 &  0.99 & 0.026 & {\bf 1.00}  & 0.003 &   0.94  & 0.026 & {\bf 1.00} & {\bf 0.002}  
				 \\ \hline
			\end{tabular}\label{Tab:noise_FB}
			\medskip
		\end{center}
	\end{table}

We present the visual results of FB-$90^\circ$ and FB-$150^\circ$ with $0.1\%$ Gaussian noise
in \Cref{fig:FB150}. None of the methods can get satisfactory recovery results under the gray scale window of $[1.03, 1.10]$.  Large fluctuations inside of the skull are produced by the competing methods, among which
 $L_1/L_2$ can restore the most details of the image.
Furthermore, we plot the horizontal and vertical profiles in \Cref{fig:FB150_vh}, which illustrates that $L_1/L_2$ leads to the smallest fluctuations  compared to  others. In contrary to the simple SL phantom, $L_p$ does not work well for FB. We also observe a well-known artifact of 
	the $L_1$ method, i.e., loss of contrast, as its profile  fails to reach the height of jump on the intervals such as $[160,180]$ in the left plot and $[220, 230]$ in the right plot of \Cref{fig:FB150_vh}, while $L_1/L_2$ has a good recovery in these regions.  As shown in \Cref{fig:FB150}, errors in these low contrast regions are being magnified when  we display the restored image in a narrow gray scale window. Furthermore, the profile plots in \Cref{fig:FB150_vh} confirm that  our approach performs  very well for  high contrast details \cite{wang2021minimizing}. We report the quantitative results of FB in \Cref{Tab:noise_FB}. Comparing \Cref{Tab:noise_SL,Tab:noise_FB} shows that all the methods yield better performance for smaller noise level and 
a larger range of scanning angle.
In addition, the recovery results of FB is much worse than the ones of SL,
 which is largely due to  low contrast structures in FB.

		\begin{figure}[t]
		\begin{center}
			\begin{tabular}{ccccc}
				SART & $L_1$ & $L_p$ &  $L_1$-$L_2$ & $L_1/L_2$\\
				\includegraphics[width=0.15\textwidth]{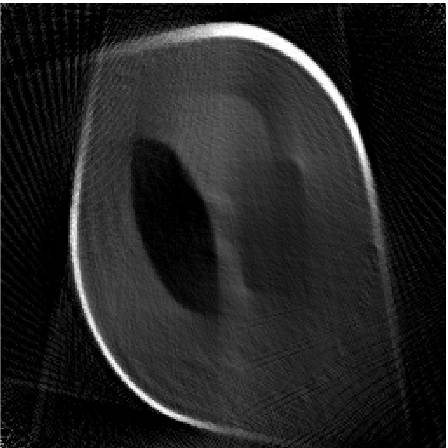} &
				\includegraphics[width=0.15\textwidth]{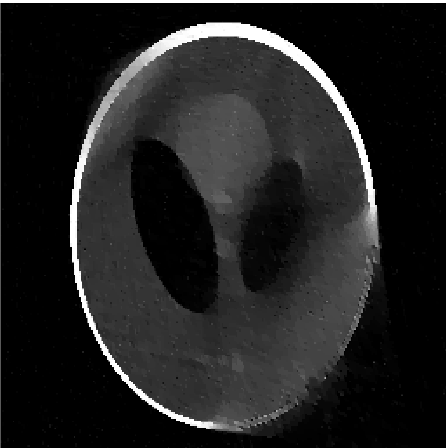} &
				\includegraphics[width=0.15\textwidth]{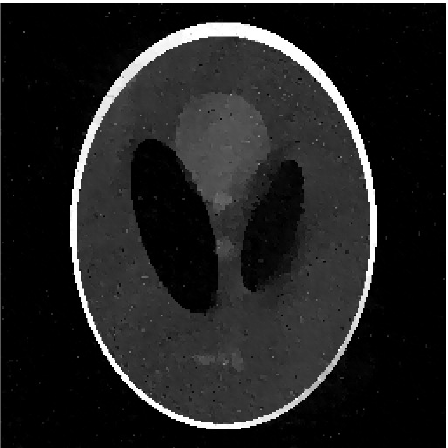} &
				\includegraphics[width=0.15\textwidth]{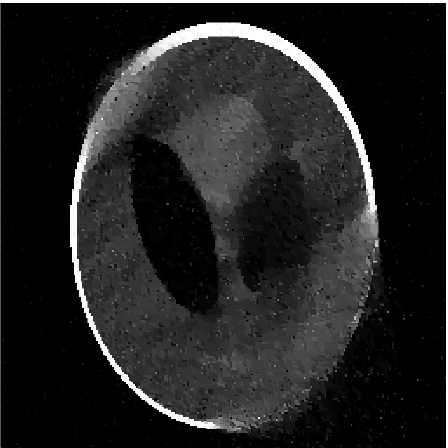}  &
				\includegraphics[width=0.15\textwidth]{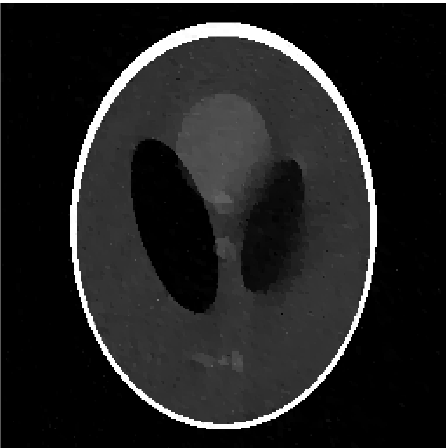}  \\
				\includegraphics[width=0.15\textwidth]{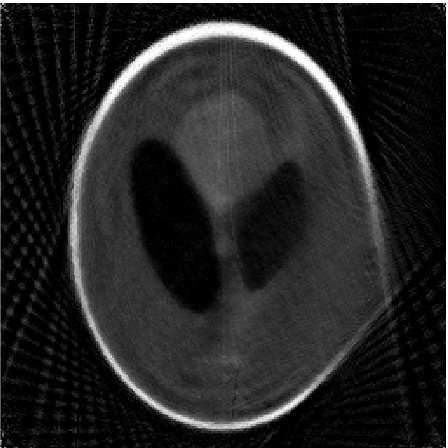} &
				\includegraphics[width=0.15\textwidth]{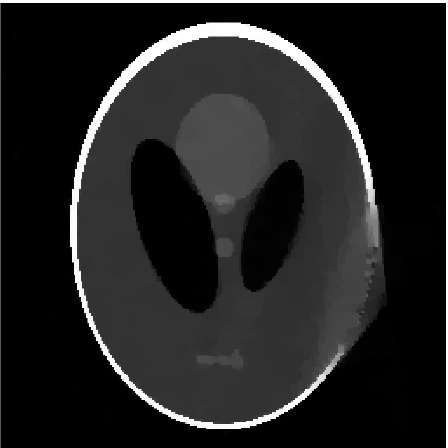} &
				\includegraphics[width=0.15\textwidth]{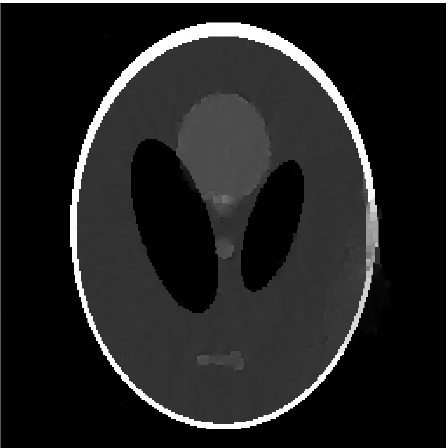} &
				\includegraphics[width=0.15\textwidth]{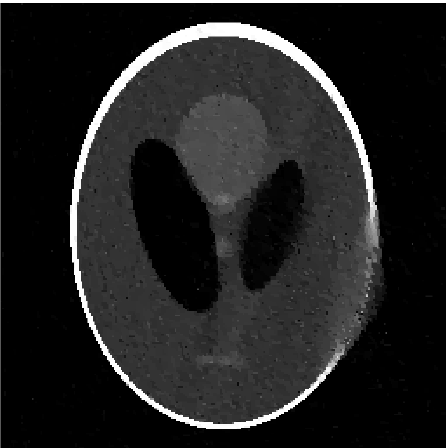}  &
				\includegraphics[width=0.15\textwidth]{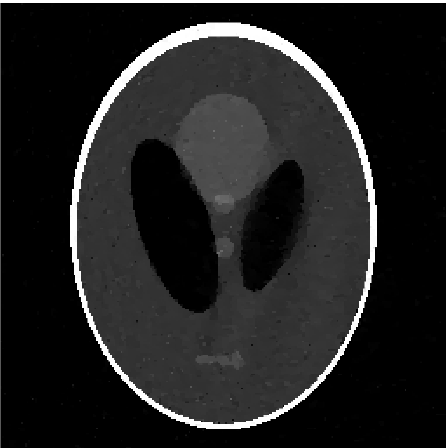}  
			\end{tabular}
		\end{center}
		\caption{CT reconstruction from $90^\circ$ (top) and $150^\circ$ (bottom) fan beam  projection  for the SL phantom with 0.5\% Gaussian noise. The gray scale window is $[0,1]$. }\label{fig:SL150fan}
	\end{figure}

We then test   fan beam CT reconstruction using the SL phantom with  $0.5\%$ Gaussian noise. 
Note that  fan beam with same scanning angle is more ill-posed than in the cases of   parallel beam.  \Cref{fig:SL150fan} illustrates that the ellipse shape of skull can not be completely recovered except for the proposed method. In the case of SL-$150^\circ$, $L_1/L_2$ recovers the image with  RMSE of 0.014, while RMSEs of other approaches all exceed 0.020. Overall, the proposed $L_1/L_2$ approach achieves significant improvements over SART, $L_1$, and $L_1$-$L_2$. Here  $L_p$ is comparable to $L_1/L_2$ only in the case of wider scanning ranges and ground-truth images with simple geometries.

	Lastly, we consider a more realistic noise statistics, i.e., Poisson noise, for the CT problem.
Under such noise model, we also examine a popular data fitting term, called \tr{ weighted least-squares (WLS)} \cite{thibault2007three}, to measure the data misfit. In fact, \tr{WLS}  replaces the LS term in \eqref{eq:grad_uncon} by $\frac{\lambda}{2} \|A u -f\|_W^2:=\frac{\lambda}{2} (A u -f)^T W (A u -f)$, where $W = \mathrm{diag}(\mathrm{exp}(-f))$. As a result, we can simply modify the LS implementations to fit in \tr{WLS}. 
		We present one example of reconstructing the SL phantom from $150^\circ$ fan beam projection with noise level $I_0=10^5$.
		\cref{fig:SL150poi5} shows similar results of LS and \tr{WLS}. Specifically, LS gives a better recovery of the skull, while \tr{WLS} has less fluctuations inside the brain. We further compare the two data terms under different noise levels in \Cref{Tab:pois},  reporting  minor improvements of \tr{WLS} over LS for all the regularization methods.

				\begin{figure}[t]
		\begin{center}
			\begin{tabular}{cccc}
				 $L_1$ & $L_p$ &  $L_1$-$L_2$ & $L_1/L_2$\\
				\includegraphics[width=0.15\textwidth]{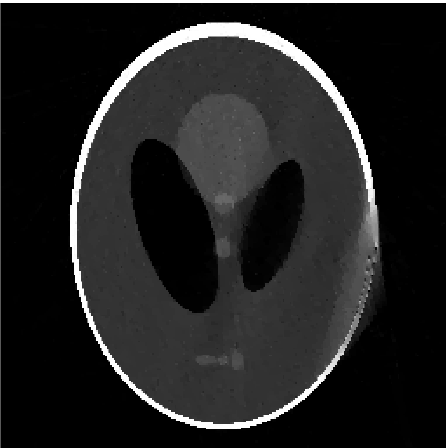} &
				\includegraphics[width=0.15\textwidth]{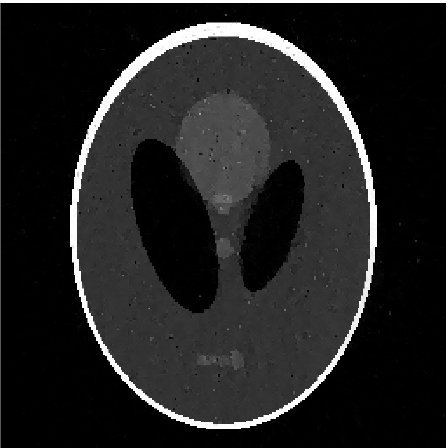} &
				\includegraphics[width=0.15\textwidth]{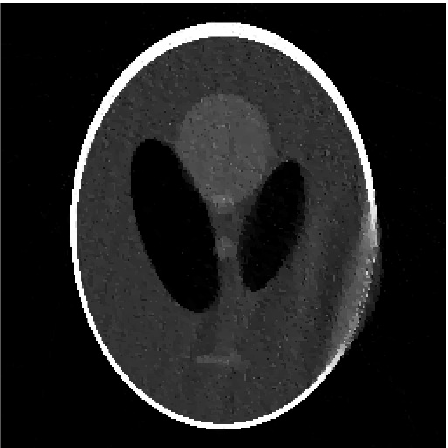}  &
				\includegraphics[width=0.15\textwidth]{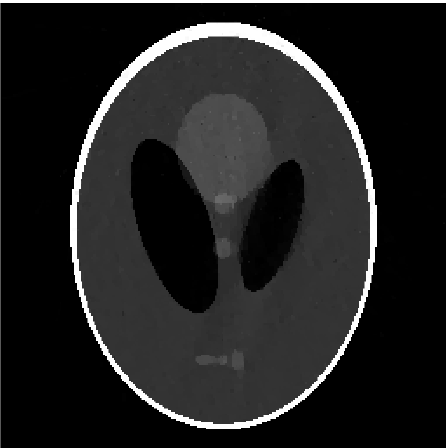}  \\
				\includegraphics[width=0.15\textwidth]{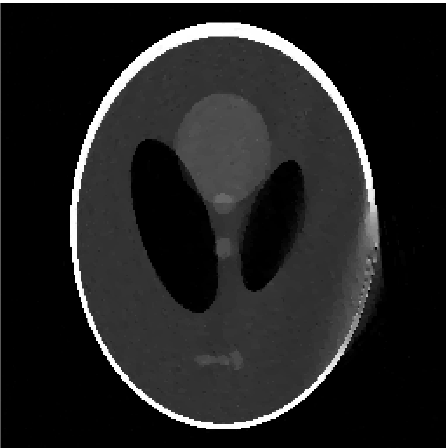} &
				\includegraphics[width=0.15\textwidth]{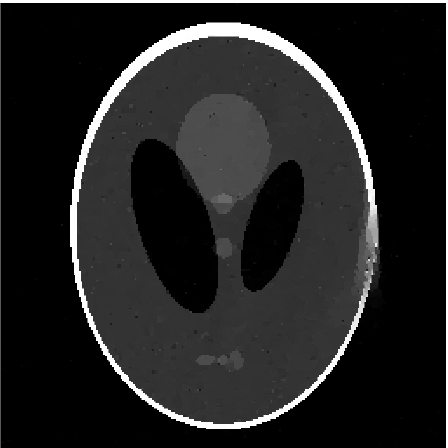} &
				\includegraphics[width=0.15\textwidth]{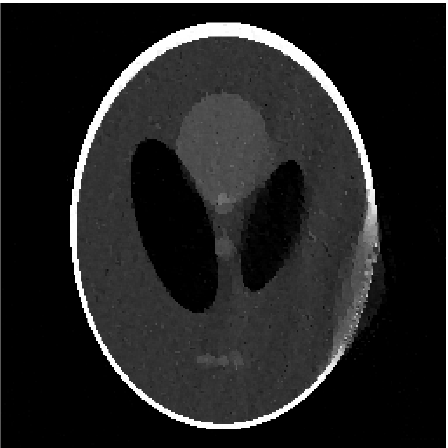}  &
				\includegraphics[width=0.15\textwidth]{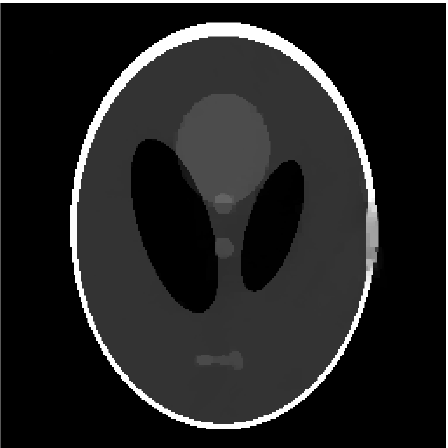}  
			\end{tabular}
		\end{center}
		\caption{CT reconstruction  from the $150^\circ$ fan beam projection for the SL phantom with Poisson noise $I_0=10^5$ using LS (top) and \tr{WLS} (bottom) data-fidelity term. The gray scale window is $[0,1]$. }\label{fig:SL150poi5}
	\end{figure}
	
		\begin{table}[t]
		\begin{center}
			\scriptsize
			\caption{CT reconstruction from  $150^\circ$ fan beam projection for  the SL phantom with Poisson noise  by $L_1$, $L_p$, $L_1$-$L_2$, and $L_1/L_2$. }
			\begin{tabular}{c|c|cc|cc|cc|cc}
				\hline
				\multirow{2}{*}{$I_0$} & \multirow{2}{*}{Data-fitting} &  \multicolumn{2}{c|}{$L_1$ } & \multicolumn{2}{c|}{$L_p$ } & \multicolumn{2}{c}{$L_1$-$L_2$ } & \multicolumn{2}{|c}{$L_1/L_2$ }  \\ \cline{3-10} 
				&  &   SSIM &   RMSE & SSIM & RMSE & SSIM &  RMSE & SSIM &   RMSE  \\ \hline
				\multirow{2}{*}{$10^4$}&  LS &   0.93 & 0.057 & 0.90  & 0.054 &  0.79  & 0.068  & 0.85  & 0.053     \\ \cline{2-10}
				& \tr{WLS} &  0.91 & 0.056 & 0.88  & 0.051 &  0.78  & 0.066 & {\bf 0.95}  & {\bf 0.051}   
				 \\ \hline	 
				\multirow{2}{*}{$10^5$}&  LS &   0.96 & 0.042  & 0.95  & 0.017 & 0.91  & 0.046 & 0.99  & 0.009   \\ \cline{2-10} 
				& \tr{WLS} &   0.97 & 0.041 & 0.98  & 0.028 &  0.93  & 0.045 & {\bf 0.99}  & {\bf 0.007}    
				 \\ \hline
			\end{tabular}\label{Tab:pois}
			\medskip
		\end{center}
	\end{table}			
		
	\subsection{Experimental dataset}\label{sect:exp_real}
	We set up a limited-angle CT problem from two experimental datasets \cite{lotusdata,walnut_ct}. 	The reference image of the walnut is of size $164\times 164,$ while the one of the lotus is  $128\times 128$. The sinogram for walnut  is  $f\in\mathbb{R}^{164\times 120}$ ($3^\circ$ per projection), and the projection matrix $A\in \mathbb{R}^{19680\times 26896}.$ In the lotus case, $f\in\mathbb{R}^{429\times 120}$ and  $A\in \mathbb{R}^{51480\times 16384}.$  When we perform the limited-angle CT reconstruction, we take partial data from $f.$  Specifically, we consider  $150^\circ$  scanning angle by selecting the first 50-projections, i.e., extracting the corresponding rows of $A$ and the columns of sinogram to generate the projection matrix and sinogram, respectively. Since the real data  contains noise generated by the CT machine, we do not add additional noise in the sinogram.  The reference images  shown in \Cref{fig:SL_FB} are reconstructed from the complete scanning data by using the Tikhonov regularization. We further impose a region of interest (ROI) when computing the quantitative evaluation metrics. The ROI is a circle with radius of 62 for walnut and 72 for lotus. 
	
	We consider a $[0,0.5]$ box constraint on all the regularization methods  ($L_1$, $L_p,$ $L_1$-$L_2$, and $L_1/L_2$), which is estimated from the reference images. We do not assume any noise type (nor noise level), and we only consider LS as the data fitting term. The optimal parameters are selected based on the ``eye-ball'' norm of the restored image, focusing on textures and details such as the shell of walnut and its inner structure. The reconstruction results are presented in \Cref{fig:walnut150,fig:lotus150} for walnut and lotus, respectively, within the corresponding ROIs and under a gray scale window of $[0,0.6].$
	In \Cref{fig:walnut150}, SART produces a lot of artifacts. The $L_1$ model gets a good recovery, but losing some details on the bottom-left corner with blurring inner texture. All these nonconvex regularization models have sharper images than $L_1,$ while $L_1/L_2$ can have a higher contrast especially for the internal region of the walnut. 
	The lotus is more difficult to reconstruct, as its root is filled with attenuating objects that causes severe metal artifacts.
	 In \Cref{fig:lotus150}, the restored image via our proposed model has less streaking artifacts than the ones by other approaches. Lastly, we provide some quantitative analysis in \Cref{Tab:real}. 
	All these regularization methods have similar performance in terms of SSIM and RMSE, while $L_1$ has the best results. As reference images have some obvious streaking artifacts, the method with the best quantitative measures does not grant the optimal performance.

		\begin{figure}[t]
		\begin{center}
			\begin{tabular}{ccc}
				Reference & SART & $L_1$ \\
				\includegraphics[width=0.2\textwidth]{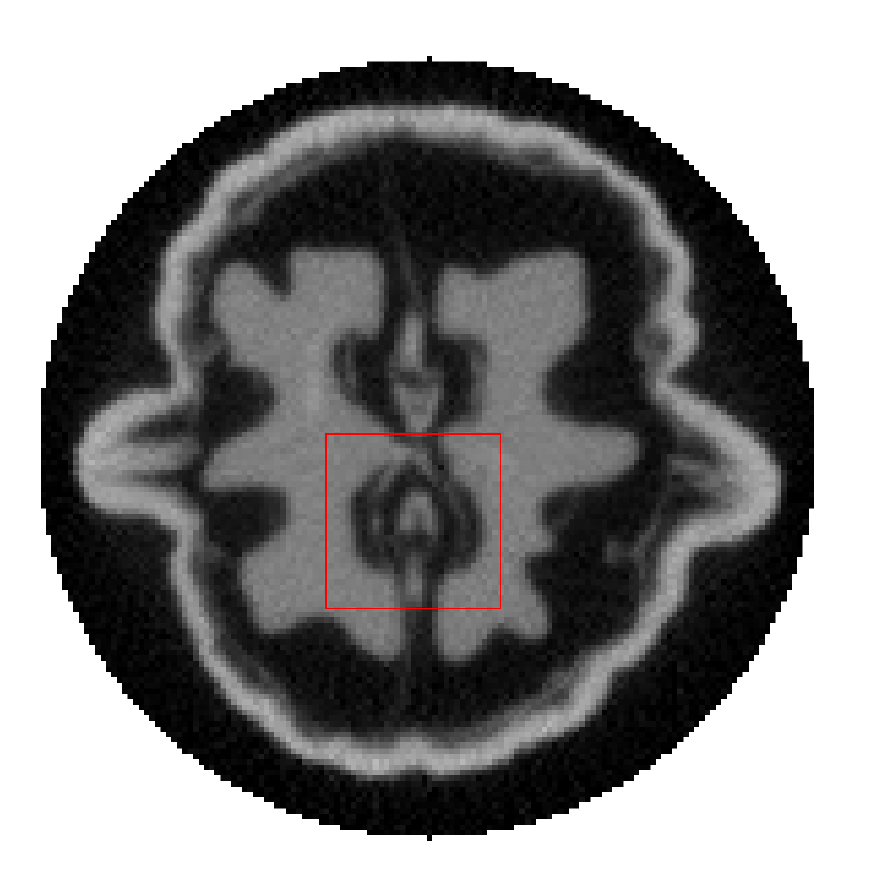} &
				\includegraphics[width=0.2\textwidth]{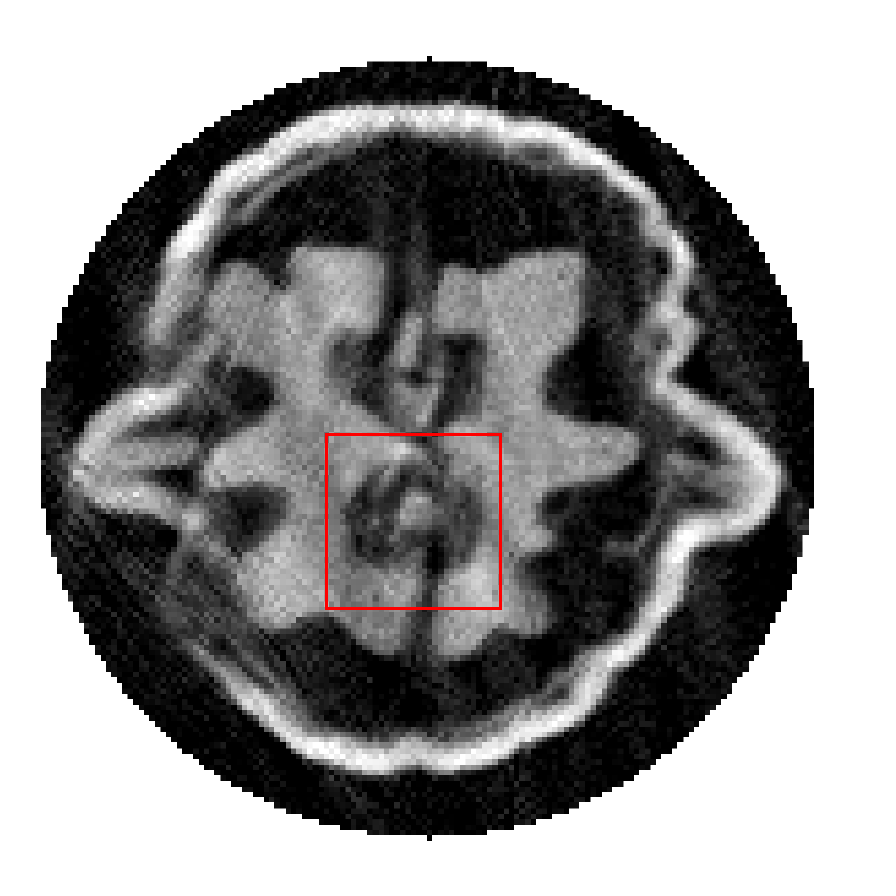} &
				\includegraphics[width=0.2\textwidth]{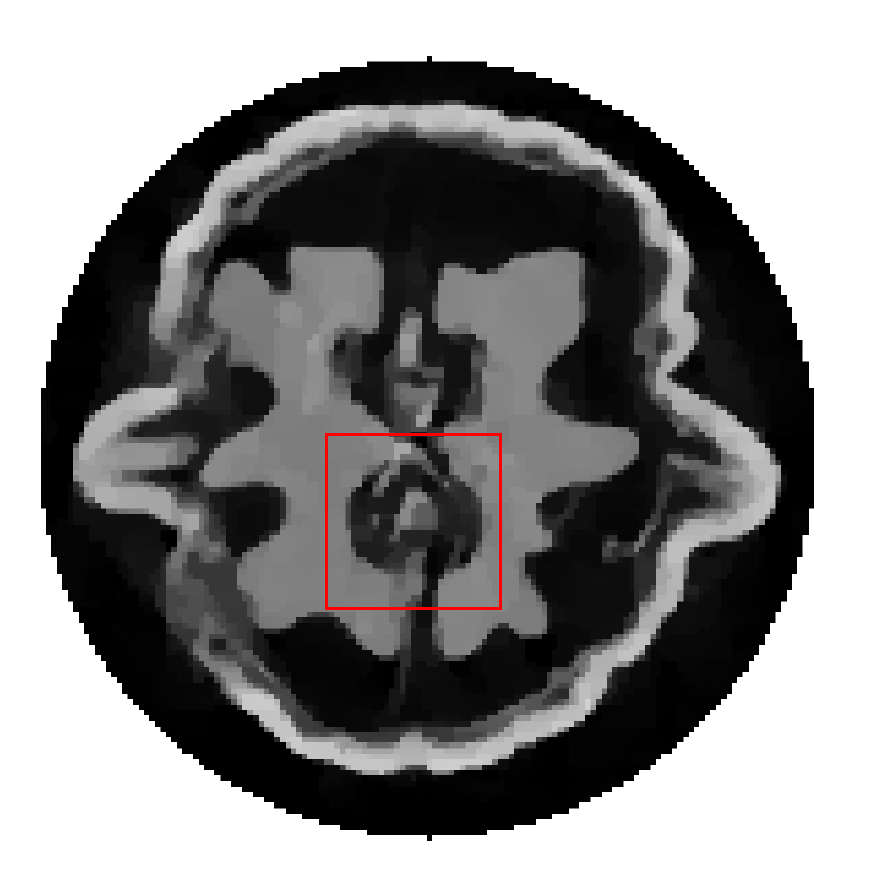} \\
				$L_p$  & 	$L_1$-$L_2$ & $L_1/L_2$\\
				\includegraphics[width=0.2\textwidth]{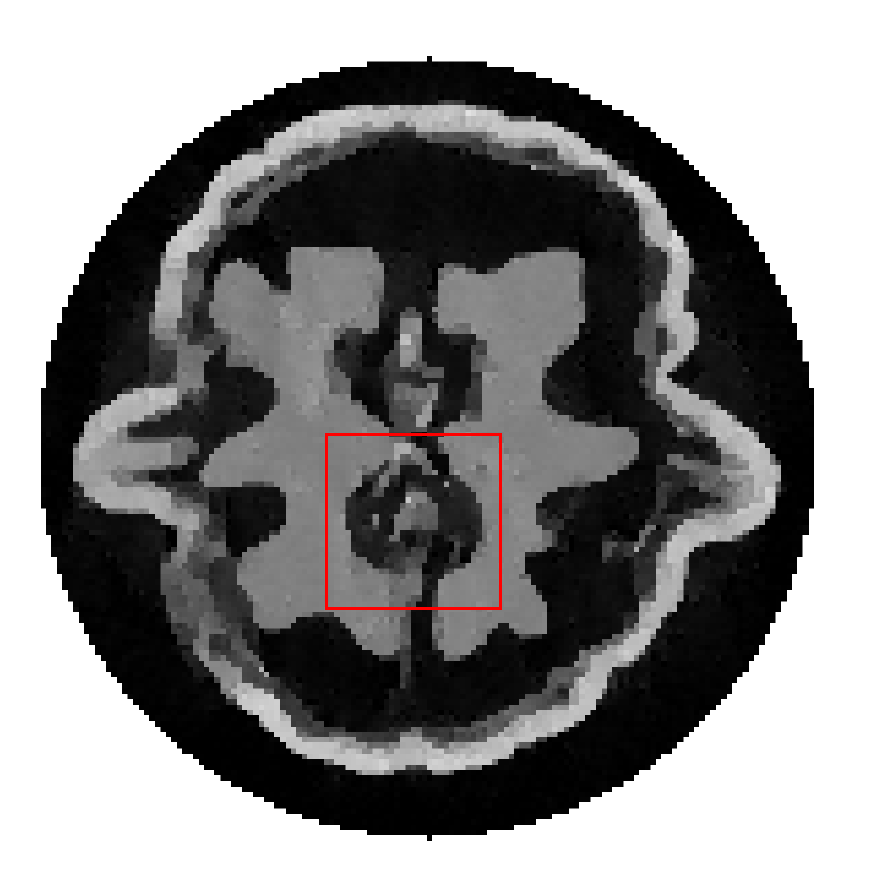} &
				\includegraphics[width=0.2\textwidth]{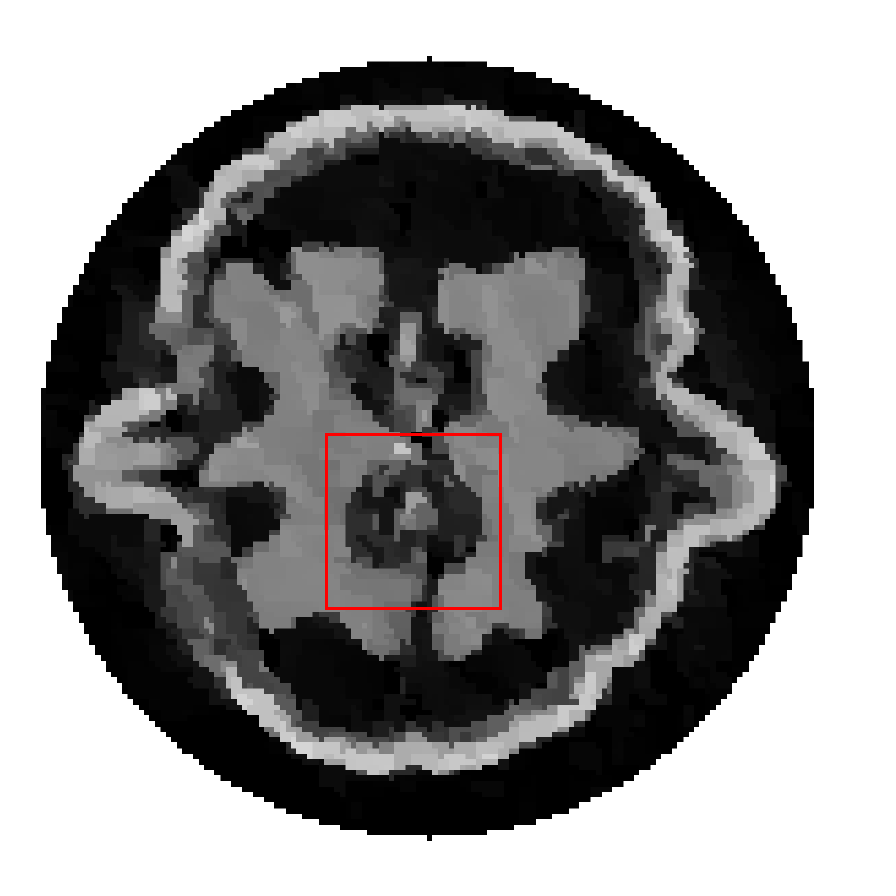} &
				\includegraphics[width=0.2\textwidth]{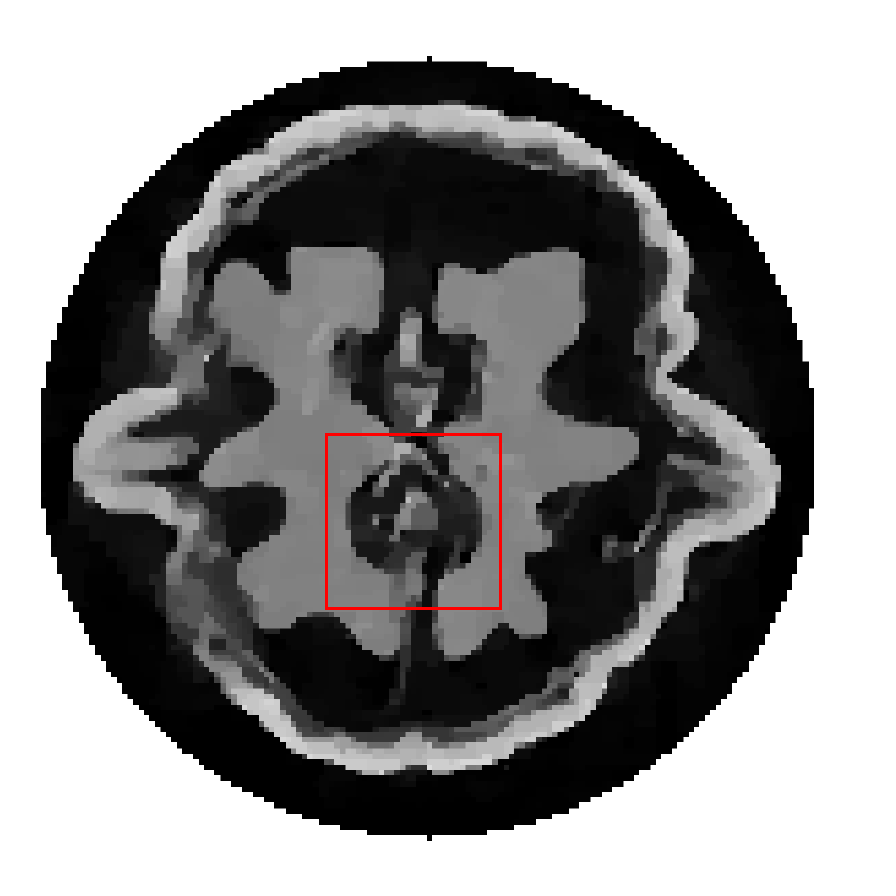} \\
				Reference & SART & $L_1$ \\
				\includegraphics[width=0.2\textwidth]{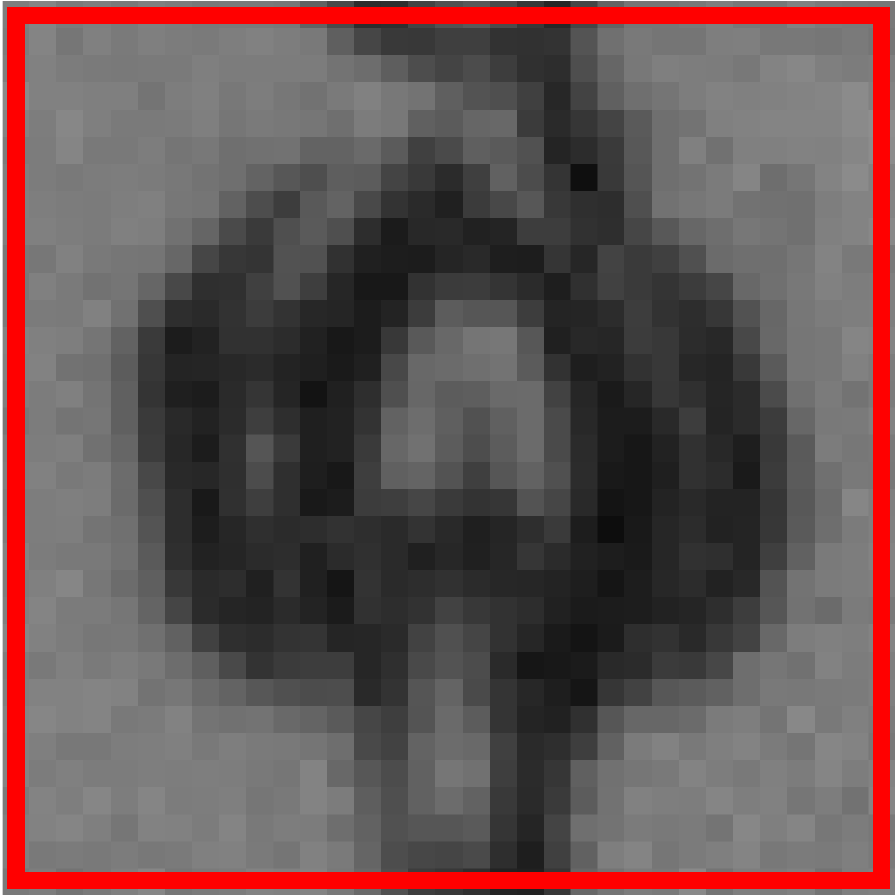} &
			     \includegraphics[width=0.2\textwidth]{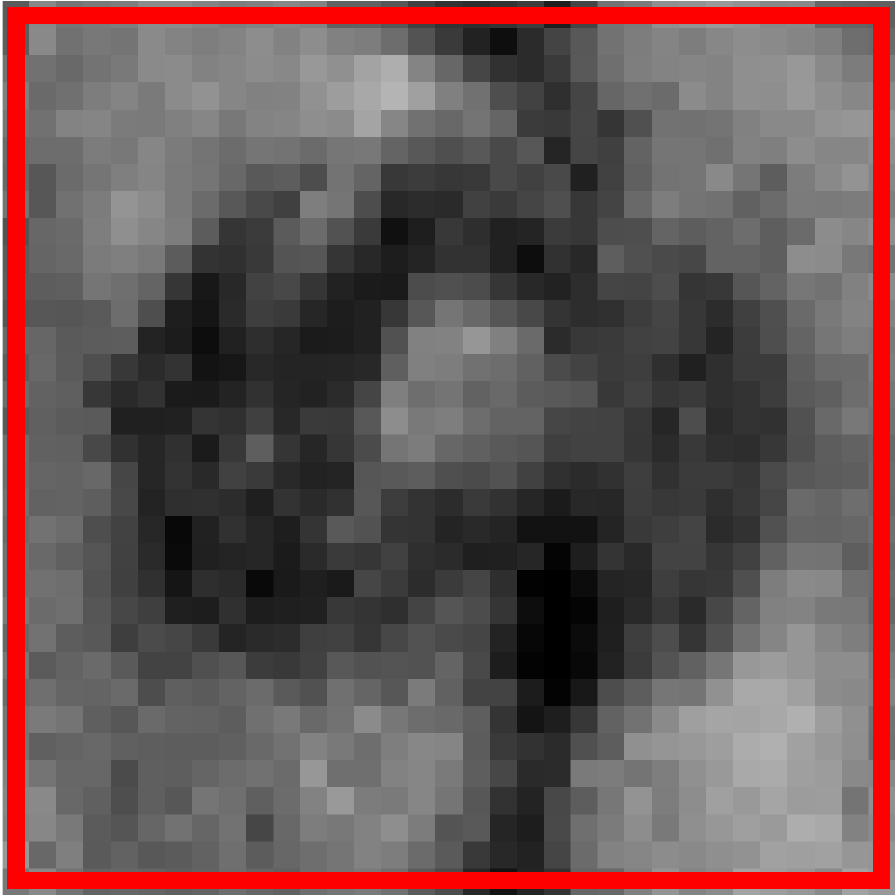} &
				\includegraphics[width=0.2\textwidth]{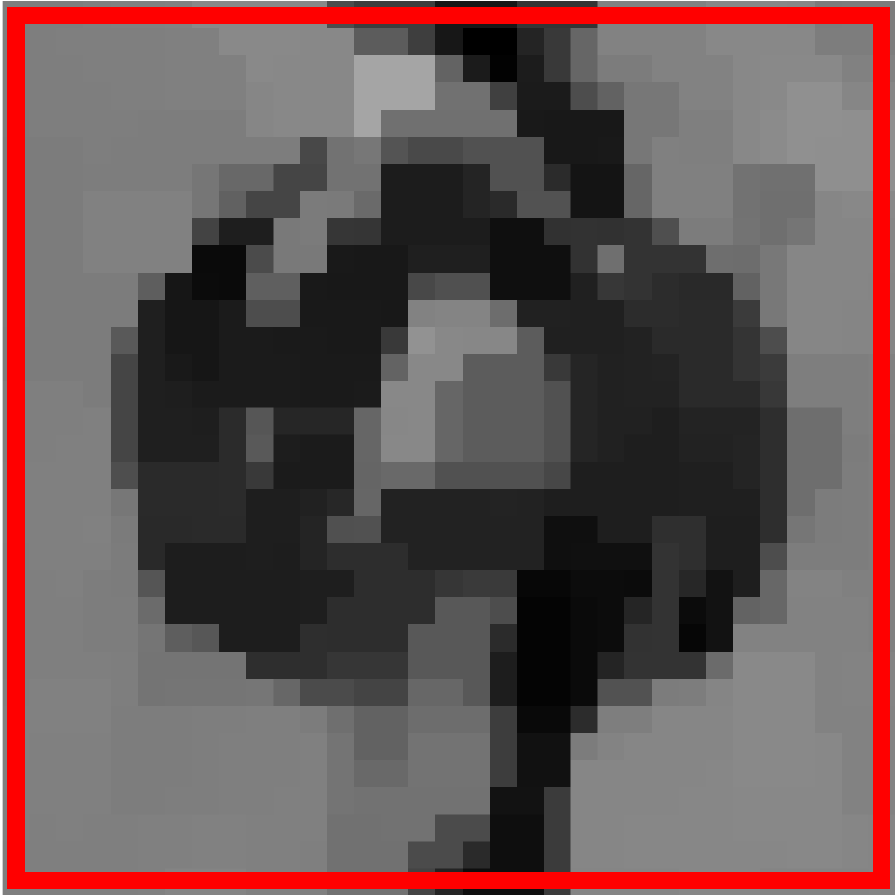} \\
				$L_p$  & 	$L_1$-$L_2$ & $L_1/L_2$\\
				\includegraphics[width=0.2\textwidth]{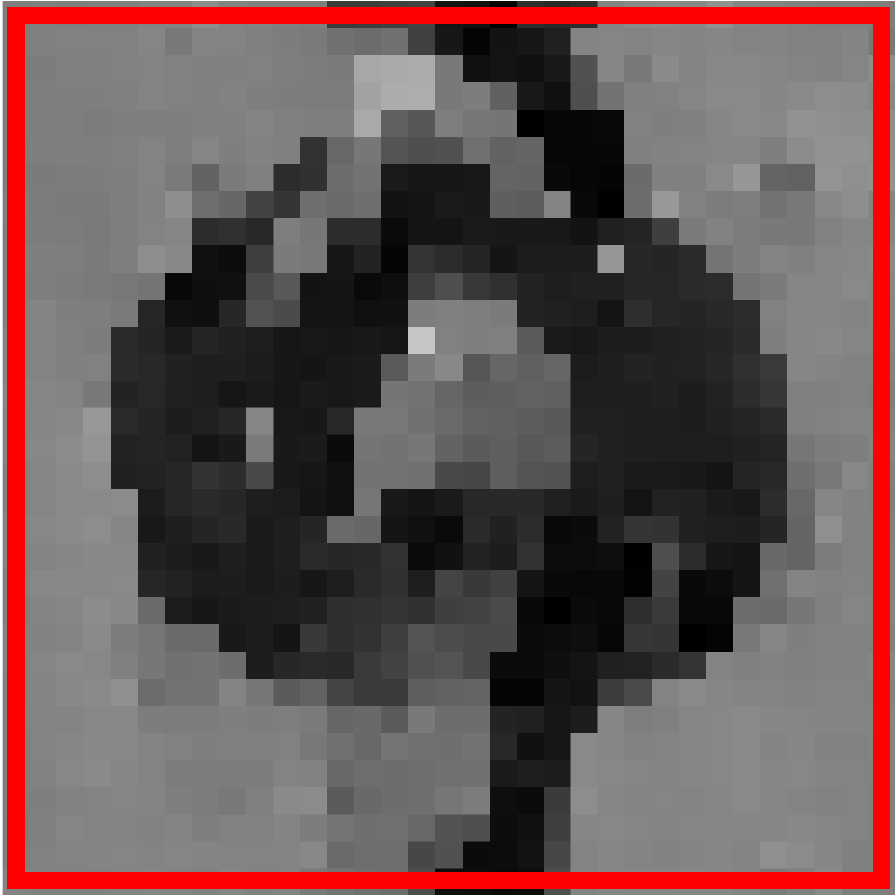} & 
				\includegraphics[width=0.2\textwidth]{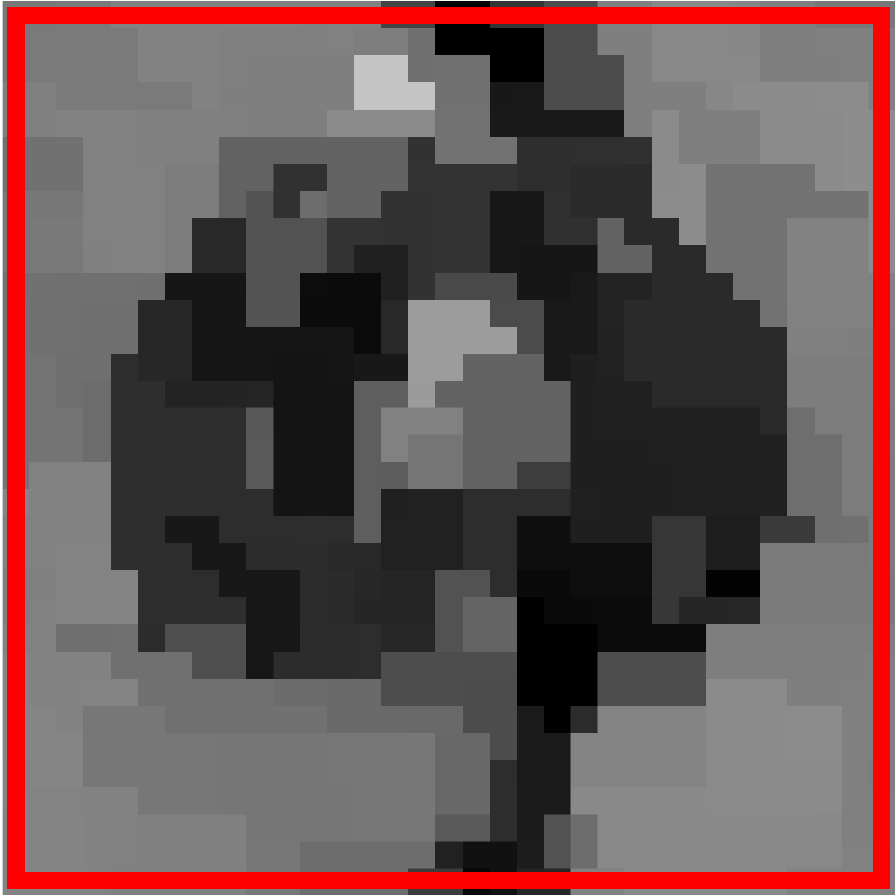} &
				\includegraphics[width=0.2\textwidth]{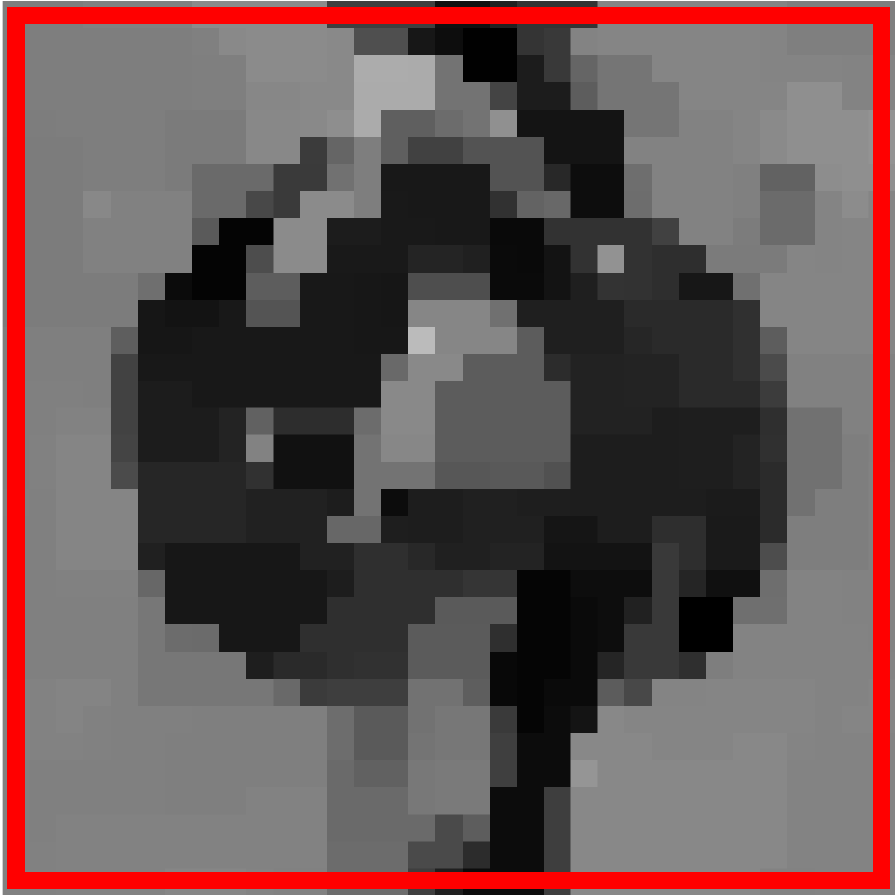} 
			\end{tabular}
		\end{center}
		\caption{CT reconstruction of a walnut in the $150^\circ$ projection range. The internal region of the walnut is zoomed-in and highlighted in red. The display window is $[0, 0.6]$.  }\label{fig:walnut150}
	\end{figure}

		\begin{figure}[t]
		\begin{center}
			\begin{tabular}{ccc}
				Reference & SART & $L_1$ \\
				\includegraphics[width=0.2\textwidth]{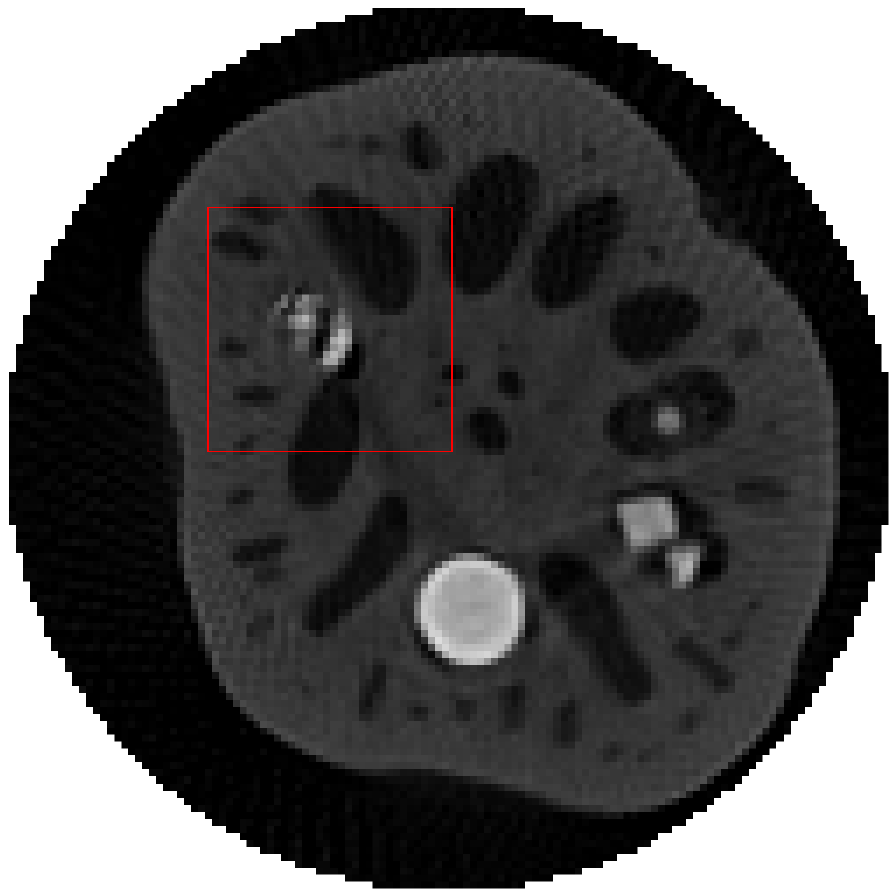} &
				\includegraphics[width=0.2\textwidth]{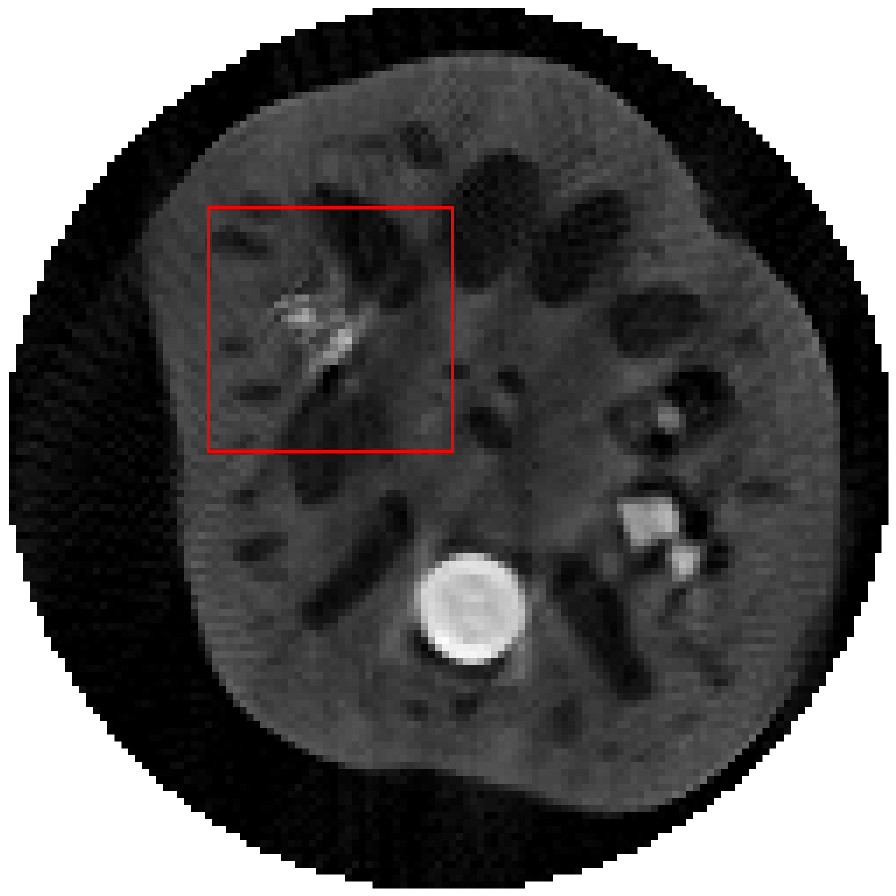} &
				\includegraphics[width=0.2\textwidth]{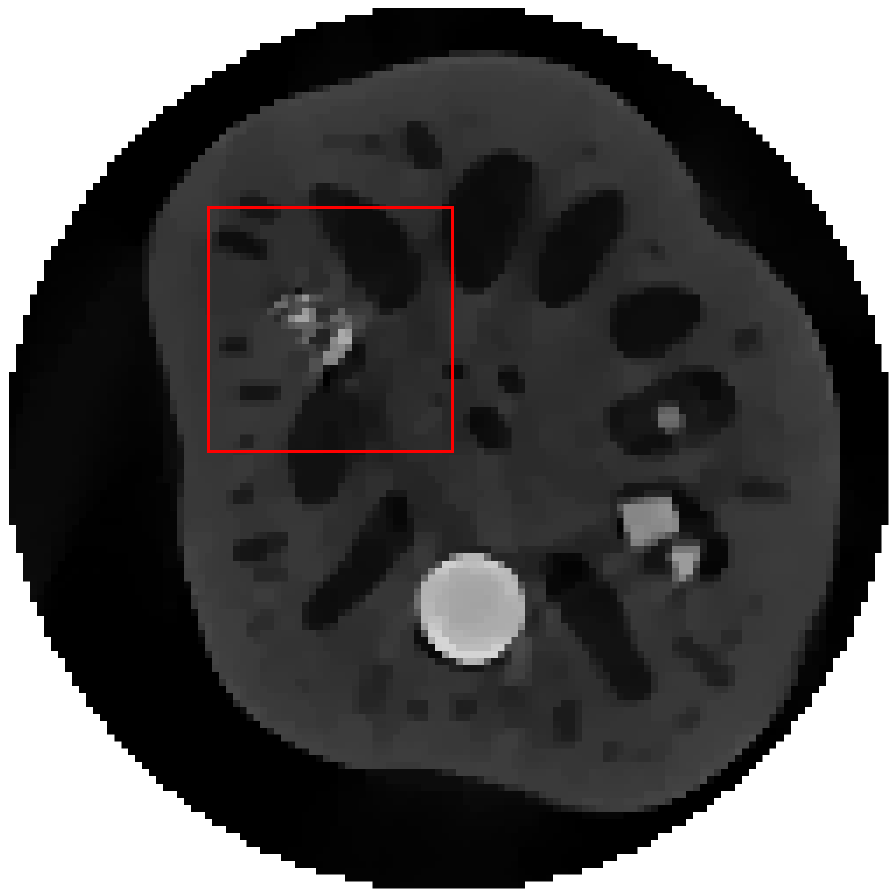} \\
				$L_p$  & 	$L_1$-$L_2$ & $L_1/L_2$\\
				\includegraphics[width=0.2\textwidth]{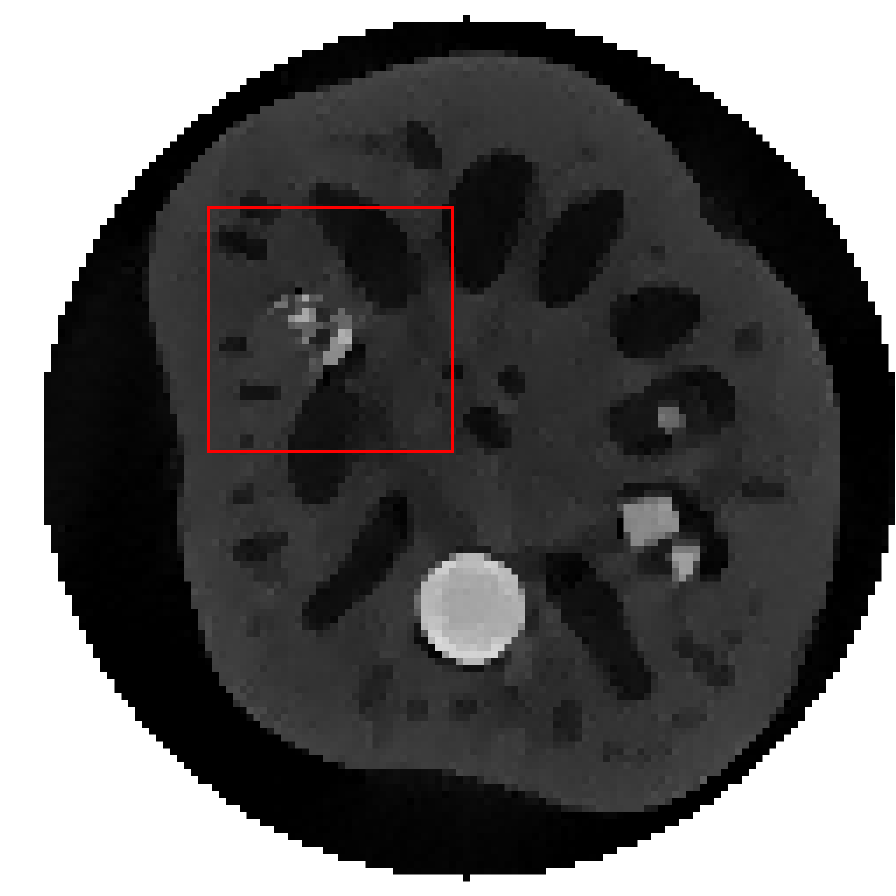} &
				\includegraphics[width=0.2\textwidth]{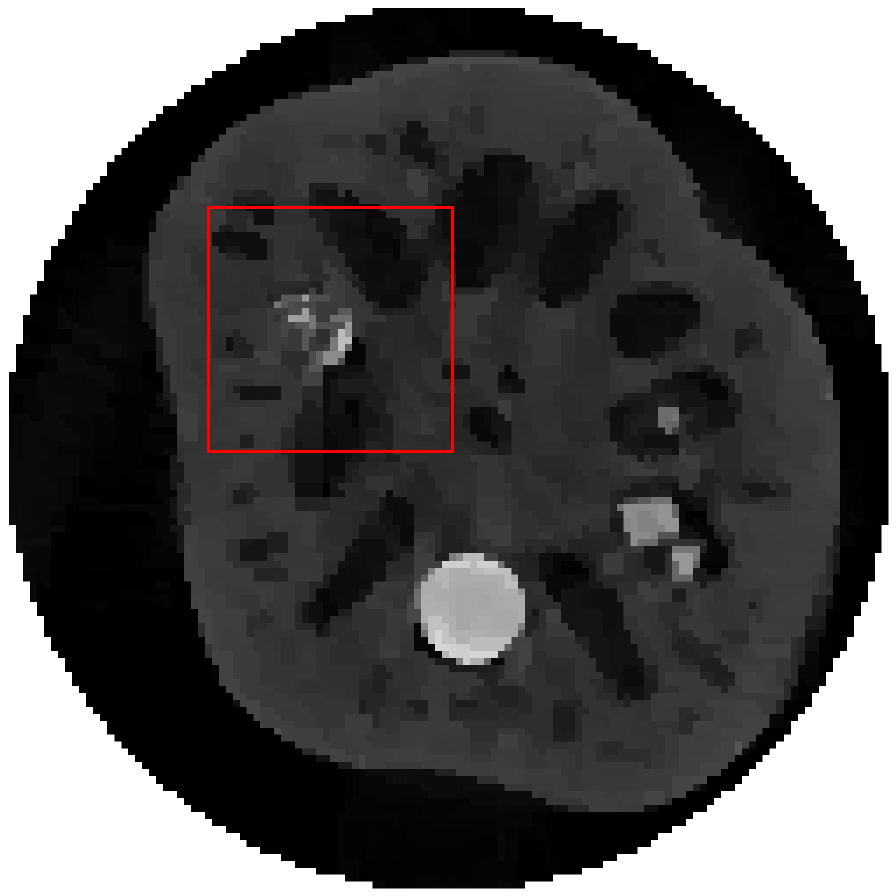} &
				\includegraphics[width=0.2\textwidth]{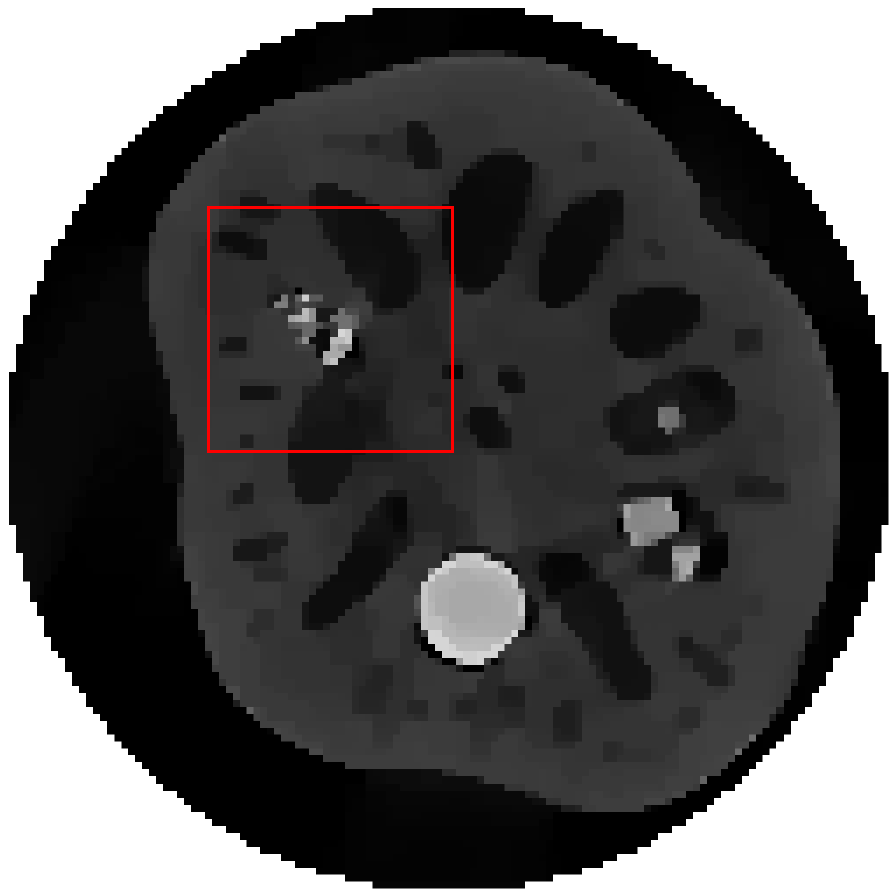} \\
				Reference & SART & $L_1$ \\
				\includegraphics[width=0.2\textwidth]{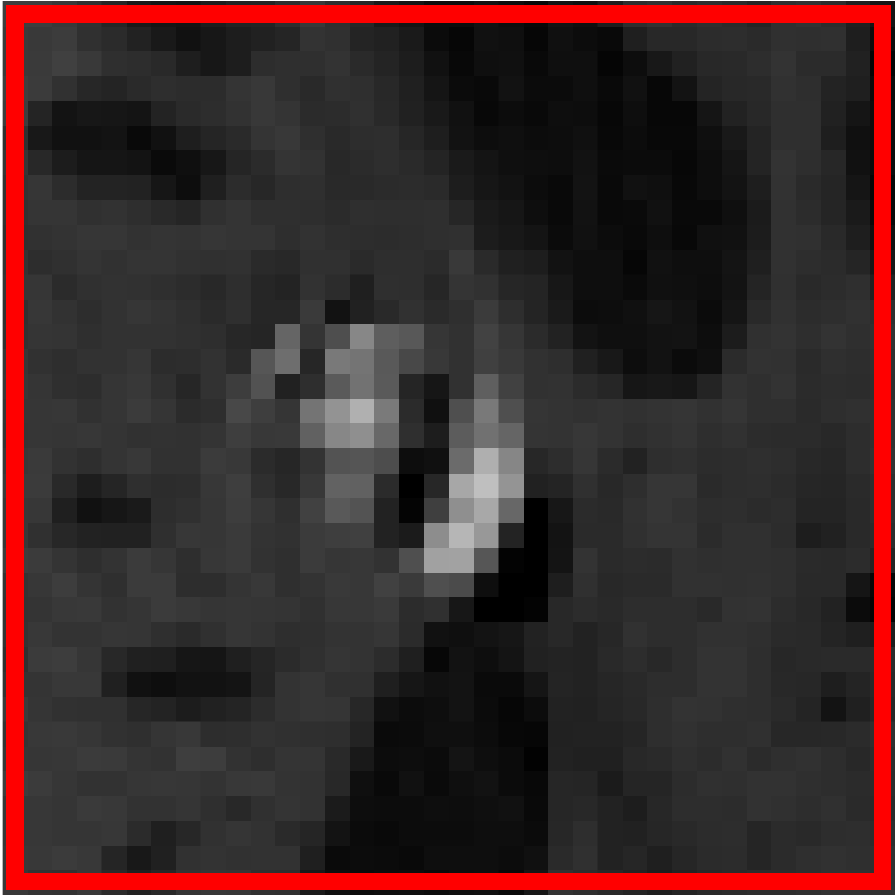} &
			     \includegraphics[width=0.2\textwidth]{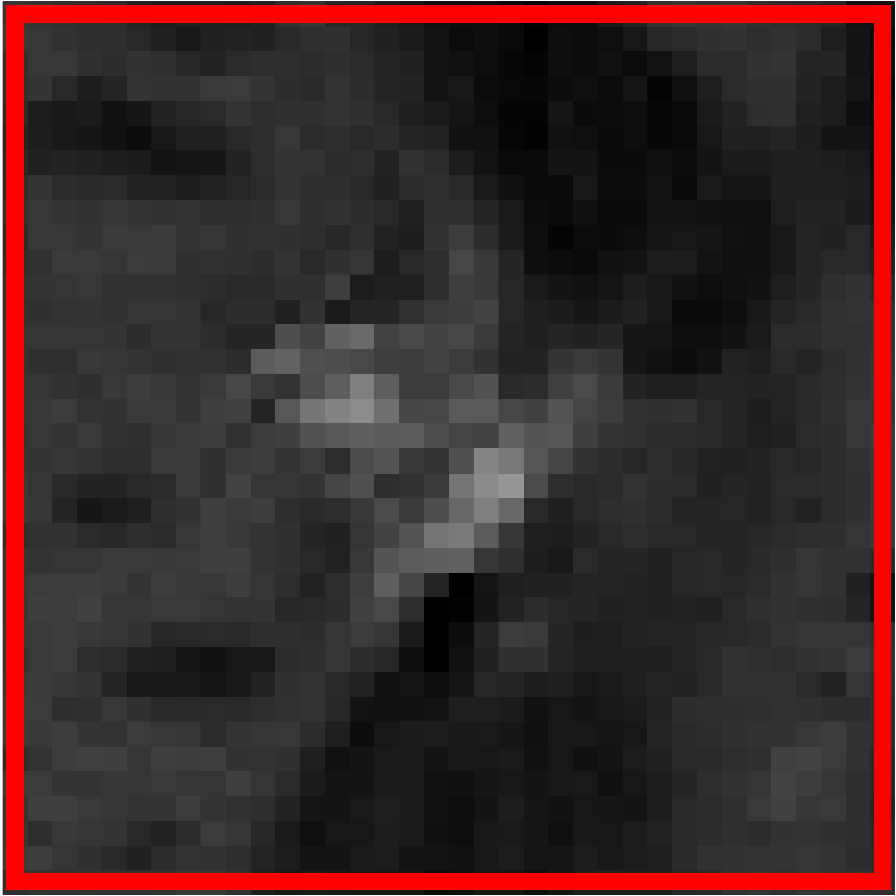} &
				\includegraphics[width=0.2\textwidth]{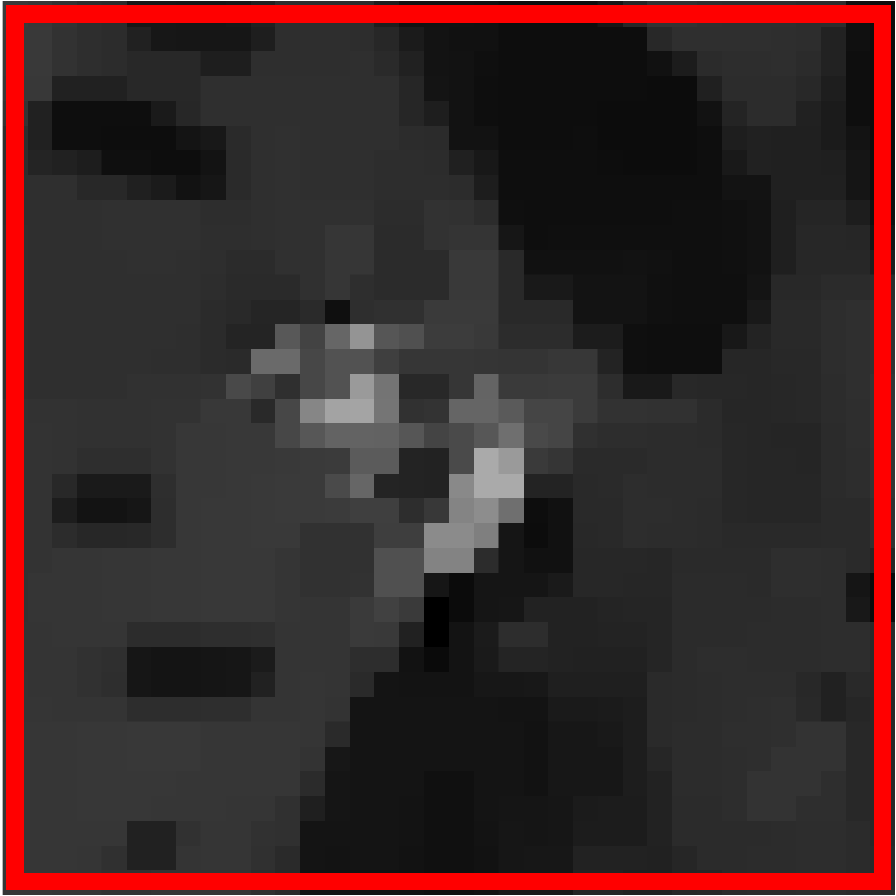} \\
				$L_p$  & 	$L_1$-$L_2$ & $L_1/L_2$\\
				\includegraphics[width=0.2\textwidth]{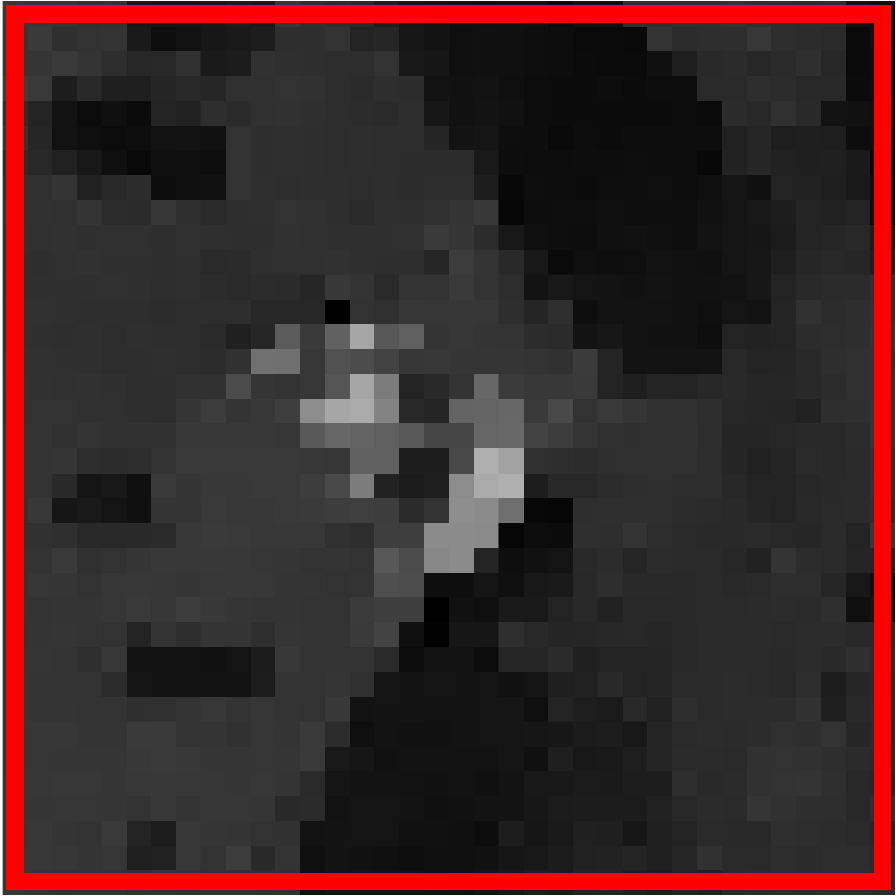} & 
				\includegraphics[width=0.2\textwidth]{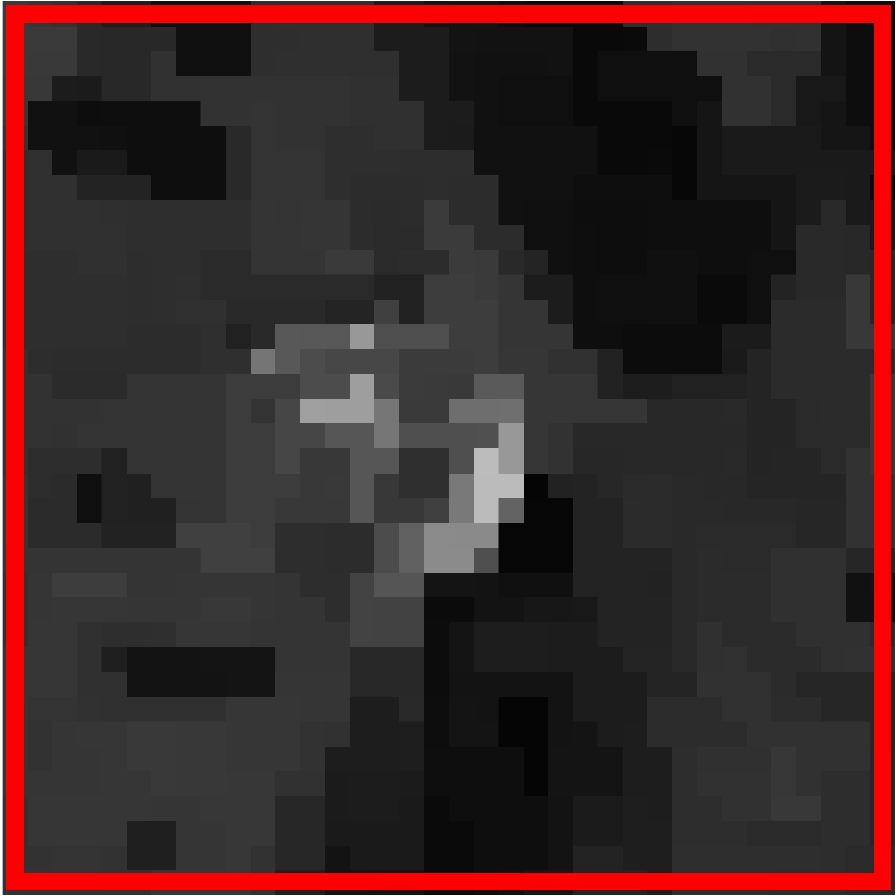} &
				\includegraphics[width=0.2\textwidth]{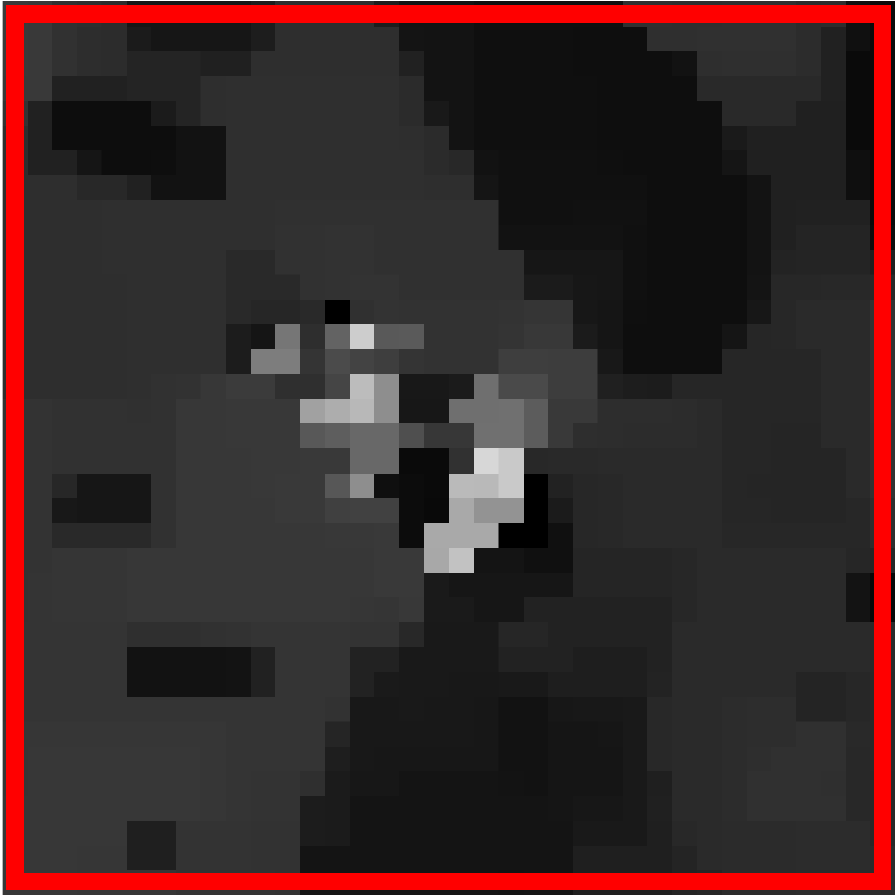} 
			\end{tabular}
		\end{center}
		\caption{CT reconstruction of a lotus in the $150^\circ$ projection range. One hole filled with attenuating objects   is zoomed-in and highlighted in red. The display window is $[0, 0.6].$}\label{fig:lotus150}
	\end{figure}
	
			\begin{table}[t]
		\begin{center}
			\caption{CT reconstruction of experimental data. }
			 \scriptsize
			\begin{tabular}{c|cc|cc|cc|cc|cc}
				\hline
				  \multirow{2}{*}{Dataset} & \multicolumn{2}{c|}{SART}& \multicolumn{2}{c|}{$L_1$ } &\multicolumn{2}{c|}{$L_p$ } & \multicolumn{2}{c}{$L_1$-$L_2$ } & \multicolumn{2}{|c}{$L_1/L_2$ }  \\ \cline{2-11} 
				 & SSIM &   RMSE & SSIM &   RMSE & SSIM & RMSE & SSIM &   RMSE & SSIM &   RMSE  \\ \hline
				  walnut & 0.87 & 0.049  &  {\bf 0.92} & {\bf 0.036}  & 0.91 & 0.040 & 0.90 & 0.041  &  0.91 & 0.041       \\ \cline{1-11} 
				 lotus & 0.93 & 0.022  &  {\bf 0.96} & {\bf 0.015}  &  0.96 & 0.016 & 0.95 & 0.017  &  0.96 & 0.017    
				 \\ \hline 
			\end{tabular}\label{Tab:real}
			\medskip
		\end{center}
	\end{table}

	\section{Conclusions and future works}
	\label{sec:conclusions}
	
	Following a preliminary work \cite{l1dl2}, we considered the use of $L_1/L_2$ on the gradient as a  regularization for imaging applications. We formulated an unconstrained model, which is novel and suitable when the noise is present. We also incorporated a box constraint that is reasonable and yet helpful for the CT reconstruction problem. We provided convergence guarantees for the proposed algorithms under mild conditions. We conducted extensive experiments to demonstrate that our approaches outperform the state-of-the-art in the limited-angle CT reconstruction subject to  either Gaussian noise or Poisson noise. Specifically, we validated the \tr{effectiveness and efficiency} of our approach by two experimental datasets. 
	
	As both $L_1$ and $L_1/L_2$ models take about 10 minutes to run on MATLAB, we will 
	implement the algorithms on the GPU for fast computation. The extensions to a higher dimension as well as to  other medical and biological applications with real data, e.g., MRI, cone-beam CT, positron emission tomography (PET), and transmission electron microscopy (TEM),  are worth exploring in the future. 
	
	\appendix
	\section{Proofs} 
	
	To prepare for convergence analysis, we summarize some equivalent conditions for strong convexity and Lipschitz smooth functions in \Cref{lemma:strong_convex} and \Cref{lemma:lip}, respectively.
	\begin{lemma}\label{lemma:strong_convex}
		A function $f( x)$ is called strongly convex with parameter $\mu$ if and only if one of the following conditions holds
		\begin{enumerate}
			\item[(a)] $g(x) = f( x)-\frac \mu 2\|x\|_2^2$ is convex;
			\item[(b)] $\langle \nabla f(x)-\nabla f(y), x-y\rangle \geq \mu\| x- y\|_2^2, \ \forall x,  y;$
			\item[(c)] $f(y)\geq f( x)+\langle \nabla f( x), y- x\rangle + \frac \mu 2 \| y- x\|_2^2, \ \forall  x, y.$
		\end{enumerate}
	\end{lemma}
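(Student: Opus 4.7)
The plan is to establish the equivalences by a cyclic chain of implications (a) $\Rightarrow$ (b) $\Rightarrow$ (c) $\Rightarrow$ (a), which is the cleanest route through the three conditions. Throughout, I will exploit the fact that $g(x) = f(x) - \frac{\mu}{2}\|x\|_2^2$ has gradient $\nabla g(x) = \nabla f(x) - \mu x$ and that $\frac{\mu}{2}\|\cdot\|_2^2$ is quadratic with easily computed first-order expansion.

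First, for (a) $\Rightarrow$ (b), I would start from the monotonicity of the gradient of the convex function $g$, namely $\langle \nabla g(x) - \nabla g(y), x - y\rangle \geq 0$ for all $x,y$. Substituting $\nabla g = \nabla f - \mu\,\mathrm{Id}$ and rearranging yields $\langle \nabla f(x) - \nabla f(y), x-y\rangle \geq \mu\|x-y\|_2^2$, which is exactly (b). This step is essentially one line of algebra and should pose no difficulty.

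Next, for (b) $\Rightarrow$ (c), the key trick is the fundamental theorem of calculus applied to $\phi(t) := f(x + t(y-x))$, which gives
\begin{equation*}
f(y) - f(x) - \langle \nabla f(x), y-x\rangle = \int_0^1 \langle \nabla f(x + t(y-x)) - \nabla f(x),\, y-x\rangle\,dt.
\end{equation*}
Applying (b) with the two points $x + t(y-x)$ and $x$ produces the pointwise bound $\langle \nabla f(x + t(y-x)) - \nabla f(x), y-x\rangle \geq \mu t\|y-x\|_2^2$, and integrating over $t \in [0,1]$ yields $\frac{\mu}{2}\|y-x\|_2^2$, establishing (c). This is the main technical step, though it is standard; the only care needed is the factor of $t$ arising from rescaling the inner-product inequality.

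Finally, for (c) $\Rightarrow$ (a), I would write out the definition of convexity for $g$ and reduce it to (c). Specifically, using $\nabla g(x) = \nabla f(x) - \mu x$ and expanding $\frac{\mu}{2}\|y\|_2^2 - \frac{\mu}{2}\|x\|_2^2 - \mu\langle x, y-x\rangle = \frac{\mu}{2}\|y-x\|_2^2$, the quantity $g(y) - g(x) - \langle \nabla g(x), y-x\rangle$ collapses to $f(y) - f(x) - \langle \nabla f(x), y-x\rangle - \frac{\mu}{2}\|y-x\|_2^2$, which is nonnegative by (c). I expect no obstacle here apart from bookkeeping. Overall, the main conceptual step is the integration argument in (b) $\Rightarrow$ (c); the other two implications are direct algebraic manipulations.
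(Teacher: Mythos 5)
Your proposed proof is correct. Note that the paper itself offers no proof of this lemma: it is stated in the appendix as a standard summary of equivalent characterizations of strong convexity (for differentiable $f$), so there is no argument in the paper to compare against. Your cyclic chain (a) $\Rightarrow$ (b) $\Rightarrow$ (c) $\Rightarrow$ (a) is the textbook route, and each step checks out: gradient monotonicity of the convex function $g$ gives (b); the integral identity with the correctly tracked factor of $t$ (from dividing $\langle \nabla f(x_t)-\nabla f(x), x_t-x\rangle \ge \mu t^2\|y-x\|_2^2$ by $t$) gives (c); and the quadratic expansion $\frac{\mu}{2}\|y\|_2^2-\frac{\mu}{2}\|x\|_2^2-\mu\langle x,y-x\rangle=\frac{\mu}{2}\|y-x\|_2^2$ reduces the first-order convexity condition for $g$ to (c). The only implicit ingredients are the standard facts that, for differentiable functions, convexity is equivalent to both gradient monotonicity and the first-order lower bound, which is an acceptable level of reliance for a lemma the paper itself treats as known.
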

	
	\begin{lemma}\label{lemma:lip}
		The gradient of $f(x)$ is Lipschitz continuous with parameter $L>0$ if and only if one of the following conditions holds
		\begin{enumerate}
			\item[(a)]
			$\| \nabla f( x)-\nabla f( y)\|_2 \leq L\|x-y\|_2, \ \forall  x, y;$
			\item[(b)] $g( x) = \frac L 2\| x\|_2^2-f( x)$ is convex;
			\item[(c)] $f(y)\leq f( x)+\langle \nabla f( x),  y- x\rangle + \frac L 2 \| y- x\|_2^2, \ \forall x, y.$
		\end{enumerate}	
	\end{lemma}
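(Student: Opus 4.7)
The plan is to prove the cycle (a) $\Rightarrow$ (c) $\Rightarrow$ (b) $\Rightarrow$ (a); condition (a) is itself the definition of $L$-Lipschitz gradient, so the substance is the equivalence among (a), (b), (c). Conditions (b) and (c) are first-order reformulations of one another via the convex function $g(x)=\tfrac{L}{2}\|x\|_2^2-f(x)$, while the link between them and (a) requires calculus along line segments.

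For (a) $\Rightarrow$ (c), I would invoke the fundamental theorem of calculus on the segment joining $x$ and $y$: write $f(y)-f(x)=\int_0^1\langle\nabla f(x+t(y-x)),y-x\rangle\,dt$, then subtract $\langle\nabla f(x),y-x\rangle=\int_0^1\langle\nabla f(x),y-x\rangle\,dt$ to obtain the remainder $\int_0^1\langle\nabla f(x+t(y-x))-\nabla f(x),y-x\rangle\,dt$. Cauchy--Schwarz followed by (a) applied to the pair $(x+t(y-x),x)$ bounds the integrand by $Lt\|y-x\|_2^2$, and integrating in $t$ from $0$ to $1$ yields the $\tfrac{L}{2}$ constant in (c).

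For (c) $\Leftrightarrow$ (b), I would set $g(x)=\tfrac{L}{2}\|x\|_2^2-f(x)$ so that $\nabla g(x)=Lx-\nabla f(x)$, and plug these into the first-order characterization of convexity from Lemma~\ref{lemma:strong_convex}(c) (with $\mu=0$), namely $g(y)\geq g(x)+\langle\nabla g(x),y-x\rangle$. Rearranging via the polarization identity $\tfrac{L}{2}(\|y\|_2^2-\|x\|_2^2)-L\langle x,y-x\rangle=\tfrac{L}{2}\|y-x\|_2^2$ reproduces (c) line for line, and since every algebraic step is reversible, this yields the equivalence in both directions.

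For (b) $\Rightarrow$ (a), the main obstacle is that the naive monotonicity consequence of convexity of $g$ only delivers the scalar inequality $\langle\nabla f(x)-\nabla f(y),x-y\rangle\leq L\|x-y\|_2^2$, which controls one inner product but not the full norm $\|\nabla f(x)-\nabla f(y)\|_2$. To close this gap I would use the equivalent form (c) established above and apply the Baillon--Haddad style trick: minimize the quadratic upper bound $\phi(y):=f(x)+\langle\nabla f(x),y-x\rangle+\tfrac{L}{2}\|y-x\|_2^2$ at the shifted point $y=x-\tfrac{1}{L}(\nabla f(x)-\nabla f(z))$ for an arbitrary $z$, obtaining the co-coercivity bound $\tfrac{1}{L}\|\nabla f(x)-\nabla f(z)\|_2^2\leq\langle\nabla f(x)-\nabla f(z),x-z\rangle$ after symmetrizing in $x$ and $z$. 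Cauchy--Schwarz on the right-hand side then extracts the desired Lipschitz estimate $\|\nabla f(x)-\nabla f(z)\|_2\leq L\|x-z\|_2$, completing the loop.
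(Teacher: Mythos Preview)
The paper states this lemma without proof, merely summarizing it as a known fact, so there is no paper argument to compare against. Your implications (a)$\Rightarrow$(c) and (c)$\Leftrightarrow$(b) are correct and standard. However, your step (b)$\Rightarrow$(a) has a genuine gap, and in fact this implication is \emph{false} without the additional hypothesis that $f$ is convex. A clean counterexample is $f(x)=-x^4$ on $\mathbb R$: here $g(x)=\tfrac{L}{2}x^2+x^4$ is convex for every $L>0$, so (b) (and hence (c)) holds, yet $f'(x)=-4x^3$ is not Lipschitz. Thus the lemma as literally written is incorrect; only the forward direction (a)$\Rightarrow$(b),(c) is true in general, and that is all the paper actually uses later.

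The gap in your Baillon--Haddad sketch is precisely where convexity of $f$ is silently invoked. To derive co-coercivity one fixes $z$, sets $h(y)=f(y)-\langle\nabla f(z),y\rangle$, and argues
\[
h(z)\;\le\; h\Bigl(x-\tfrac{1}{L}\nabla h(x)\Bigr)\;\le\; h(x)-\tfrac{1}{2L}\|\nabla h(x)\|_2^2 .
\]
The second inequality is your ``minimize the quadratic upper bound'' step and follows from (c). The first inequality, however, requires $z$ to be a \emph{global} minimizer of $h$; having $\nabla h(z)=0$ is not enough unless $h$ (equivalently $f$) is convex. Without that lower bound you cannot isolate $\|\nabla f(x)-\nabla f(z)\|_2^2$ on the left, and the counterexample above shows no alternative route exists. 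If you add the hypothesis ``$f$ convex'' to the lemma, your cycle (a)$\Rightarrow$(c)$\Leftrightarrow$(b)$\Rightarrow$(a) becomes a complete and correct proof.
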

	
	We show in \Cref{lem4.4} that the gradient of the function $f(\h x)=\frac{1}{\|\h x\|_2}$  is  Lipschitz continuous
	on  a set with a lower bound.

	\begin{lemma}\label{lem4.4} Given a function $f(\h x)=\frac{1}{\|\h x\|_2}$ and a set ${\cal M}_{\epsilon}:=\left\{\h x |\|\h x\|_2\ge \epsilon \right\}$ for a positive constant $\epsilon >0,$  we have
		\begin{eqnarray}\nn\|\nabla f(\h x)-\nabla f(\h y)\|_2\le \frac{2 }{\epsilon ^3}\|\h x -\h y\|_2,\;\forall\; \h x,\h y\in {\cal M}_{\epsilon}.
		\end{eqnarray}
	\end{lemma}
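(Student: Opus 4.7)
My plan is to prove the Lipschitz bound by a direct algebraic reduction rather than a mean-value argument, because $\mathcal{M}_\epsilon$ is not convex (the segment between two points of norm $\geq \epsilon$ may pass through a neighborhood of $\h 0$, where $\nabla f$ blows up). First, I would compute $\nabla f(\h x) = -\h x/\|\h x\|_2^3$ by direct differentiation, and introduce the shorthand $a := \|\h x\|_2$, $b := \|\h y\|_2$, and $c := \langle \h x, \h y\rangle$. Expanding inner products, the squared form of the target inequality $\|\nabla f(\h x) - \nabla f(\h y)\|_2^2 \leq (4/\epsilon^6)\|\h x - \h y\|_2^2$ becomes
\begin{equation*}
\frac{1}{a^4} + \frac{1}{b^4} - \frac{2c}{a^3 b^3} \;\leq\; \frac{4}{\epsilon^6}\bigl(a^2 + b^2 - 2c\bigr).
\end{equation*}

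The crucial observation is that both sides are \emph{linear} in $c$. Defining $\phi(c)$ as the left-hand side minus the right-hand side, the slope of $\phi$ is $8/\epsilon^6 - 2/(a^3 b^3)$, which is strictly positive on $\mathcal{M}_\epsilon$ since $a, b \geq \epsilon$ forces $a^3 b^3 \geq \epsilon^6$. Therefore $\phi$ attains its maximum over the Cauchy--Schwarz interval $c \in [-ab, ab]$ at the right endpoint $c = ab$, and it suffices to check $\phi(ab) \leq 0$.

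At $c = ab$, the identity $1/a^4 + 1/b^4 - 2/(a^2 b^2) = (1/a^2 - 1/b^2)^2 = (a+b)^2(a-b)^2/(a^4 b^4)$ produces the clean factorization
\begin{equation*}
\phi(ab) = (a - b)^2 \left[ \frac{(a+b)^2}{a^4 b^4} - \frac{4}{\epsilon^6} \right].
\end{equation*}
Assuming without loss of generality $a \geq b \geq \epsilon$, the estimate $(a+b)^2 \leq 4 a^2$ yields $(a+b)^2/(a^4 b^4) \leq 4/(a^2 b^4) \leq 4/\epsilon^6$, so the bracket is nonpositive and the proof is complete.

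The main obstacle is bookkeeping in the expansion; the conceptual key is recognizing linearity in $c$, which bypasses the nonconvexity of $\mathcal{M}_\epsilon$ entirely. An alternative mean-value route would compute the Hessian $\nabla^2 f(\h x) = -\|\h x\|_2^{-3} I + 3\, \h x \h x^{\top}\|\h x\|_2^{-5}$, verify that its operator norm equals $2/\|\h x\|_2^3$, and then stitch together the two radial sub-segments that lie in $\mathcal{M}_\epsilon$ with a chord across the sphere $\|\cdot\|_2 = \epsilon$; this also works but is longer.
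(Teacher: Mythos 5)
Your proof is correct, and it takes a genuinely different route from the paper's. The paper computes the Hessian $\nabla^2 f(\h x) = -\|\h x\|_2^{-3} I + 3\,\h x\h x^{\top}\|\h x\|_2^{-5}$, bounds the quadratic form $\h y^{\top}\nabla^2 f(\h x)\h y$ by $\tfrac{2}{\epsilon^3}\h y^{\top}\h y$ to conclude the spectral radius is at most $2/\epsilon^3$, and then (implicitly) passes to the Lipschitz bound via the mean value inequality. Your direct algebraic reduction --- expanding both sides in $a=\|\h x\|_2$, $b=\|\h y\|_2$, $c=\langle \h x,\h y\rangle$, exploiting linearity in $c$ to push to the Cauchy--Schwarz endpoint $c=ab$, and factoring out $(a-b)^2$ --- avoids the Hessian entirely. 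What your approach buys is precisely the point you raise: $\mathcal M_\epsilon$ is not convex, so the segment joining $\h x$ and $\h y$ can leave the region where the Hessian bound holds, and the paper's argument as written is silent on this. (The claim is still true, and can be repaired by the radial-plus-chord stitching you sketch, but the paper does not do so.) Your endpoint check is sound: at $c=ab$ one gets $\phi(ab)=(a-b)^2\bigl[(a+b)^2/(a^4b^4)-4/\epsilon^6\bigr]$, and with $a\ge b\ge\epsilon$ the estimate $(a+b)^2\le 4a^2$ gives $(a+b)^2/(a^4b^4)\le 4/(a^2b^4)\le 4/\epsilon^6$, so the bracket is nonpositive. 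The paper's route is shorter and reuses the Hessian computation elsewhere in spirit; yours is self-contained and closes a genuine gap in the domain geometry.
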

	
	\begin{proof}
		Some calculations lead to 
$ \nabla f(\h x) = -\frac{\h x}{\|\h x\|_2^3}$  and  $\nabla^2 f(\h x) =-\frac{1}{\|\h x\|_2^3}  I +3 \h x \h x^T \frac{1}{\|\h x\|_2^5}$ 
		with the identify matrix $I$. Then for $\forall \h y$, one has
		\begin{equation*}
		    \h y^T\nabla^2 f(\h x) \h y = -\frac{\h y^T \h y }{\|\h x\|_2^3} +3\frac{\h y^T \h x \h x^T \h y }{\|\h x\|_2^5} \leq 2\frac{\h y^T \h y }{\|\h x\|_2^3} \leq \frac{2}{\epsilon^3} \h y^T \h y,
		\end{equation*} 
		which implies that the maximum spectral radius of Hessian of $f$ is less than $\frac{2}{\epsilon^3}.$ 
	\end{proof}

	\subsection{Proof of \Cref{lem43}}
	\begin{proof}It follows from the optimality condition of the $\h h$-subproblem in (\ref{ADMML1overL2all}) that
		\begin{equation}\label{eq:opt4hsub}
		-\dfrac{a^{(k+1)}}{\|\h h^{(k+1)}\|^3}\h h^{(k+1)} + \rho_2\left(\h h^{(k+1)} -\nabla u^{(k+1)}-\h b_2^{(k)}\right) =0,
		\end{equation}
		where $a^{(k)}:=\|\nabla u^{(k)}\|_1$. Using the dual update $- \h b_2^{(k+1)}=\h h^{(k+1)} -\nabla u^{(k+1)}-\h b_2^{(k)},$  we have
		\begin{equation}
		\h b_2^{(k+1)}=-\frac{a^{(k+1)}}{\rho_2}\frac{\h h^{(k+1)}}{\|\h h^{(k+1)}\|^3_2},	
		\end{equation}
		and similarly,
		\begin{equation}	\label{equ:b_2}
		\h b_2^{(k)}=-\frac{a^{(k)}}{\rho_2}\frac{\h h^{(k)}}{\|\h h^{(k)}\|^3_2}.
		\end{equation}
		We can estimate
		\begin{eqnarray}
		&&	\|\h b_2^{(k+1)}-\h b_2^{(k)}\|_2=\frac{1}{\rho_2}\left\| a^{(k+1)}\frac{\h h^{(k+1)}}{\|\h h^{(k+1)}\|_2^3}- a^{(k)}\frac{\h h^{(k)}}{\|\h h^{(k)}\|^3}\right\|_2
		\nn	\\
		&	\le& \frac{1}{\rho_2}\left( \frac{1}{\|\h h^{(k+1)}\|_2^2}\big|a^{(k+1)}-a^{(k)}\big|+a^{(k)}\left\|\frac{\h h^{(k+1)}}{\|\h h^{(k+1)}\|_2^3}-\frac{\h h^{(k)}}{\|\h h^{(k)}\|_2^3}\right\|_2\right).\label{ineq:lemma4b}
		\end{eqnarray}
		For the first term in \eqref{ineq:lemma4b}, we use the facts that $\|\h x\|_1\leq \sqrt l\|\h x\|_2$ for a vector $\h x$ of the length of $l$ and $\|\nabla\|_2^2\leq 8$, thus leading to
		\begin{eqnarray}\label{ineq:lemma4a}
		|a^{(k+1)}-a^{(k)}|&\leq& \|\nabla (u^{(k+1)}-u^{(k)})\|_1\leq \sqrt{2mn} \|\nabla (u^{(k+1)}-u^{(k)})\|_2\nn\\
		&\leq & \sqrt{2mn}\cdot \|\nabla\|_2\cdot \|u^{(k+1)}-u^{(k)}\|_2\leq 4\sqrt{mn}\|u^{(k+1)}-u^{(k)}\|_2.
		\end{eqnarray}
		Note that $u\in\mathbb R^{m\times n}$ and $\nabla u \in\mathbb R^{m\times n\times 2}$ (thus of length $2mn$.)	
		Invoking Lemma \ref{lem4.4}, 
		we get
		\begin{eqnarray}\label{ineq:lemma4h}
		a^{(k)}\left\|\frac{\h h^{(k+1)}}{\|\h h^{(k+1)}\|_2^3}-\frac{\h h^{(k)}}{\|\h h^{(k)}\|_2^3}\right\|_2\le\frac{2M}{\epsilon^3}\|\h h^{(k+1)}-\h h^{(k)}\|_2.
		\end{eqnarray}
		By putting together \eqref{ineq:lemma4b}-\eqref{ineq:lemma4h} and using the Cauchy-Schwarz inequality, we get 
		\eqref{ineq:lemma}.
	\end{proof}

	\subsection{Proof of \Cref{lem:suff_decr}}
	
	In order to prove \Cref{lem:suff_decr}, we show in \Cref{lem:suff_decr_u} that
	the augmented Lagrangian decreases sufficiently with respect to $u^{(k)}$.

	\begin{lemma}
		\label{lem:suff_decr_u}
		Under the same assumptions as in \Cref{lem:suff_decr},  there exists a constant $\bar c_1>0$ such that
		\begin{equation}\label{ineq:lem43_Lu}
		\mathcal{L}(u^{(k+1)}, \h h^{(k)}; \h b^{(k)}_2)- \mathcal{L}(u^{(k)}, \h h^{(k)}; \h b_2^{(k)})\leq - \frac {\bar c_1} 2 \|u^{(k+1)}-u^{(k)}\|_2^2,
		\end{equation}
		holds for the augmented Lagrangian corresponding to $L_1/L_2$-uncon and $L_1/L_2$-box. 
	\end{lemma}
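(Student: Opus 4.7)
The plan is to view the $u$-subproblem as minimizing a strongly convex objective and then extract the descent inequality from the standard ``strongly convex minimizer'' argument. Specifically, I would complete the square in the augmented Lagrangian to write, for fixed $\h h^{(k)}$ and $\h b_2^{(k)}$,
\begin{equation*}
\mathcal{L}(u,\h h^{(k)};\h b_2^{(k)}) = F(u) + G(u) + \mathrm{const},
\end{equation*}
where $F(u) = \frac{\lambda}{2}\|Au-f\|_2^2 + \frac{\rho_2}{2}\|\h h^{(k)} - \nabla u - \h b_2^{(k)}\|_2^2$ is smooth and $G(u) = \frac{\|\nabla u\|_1}{\|\h h^{(k)}\|_2}$ (with $\|\h h^{(k)}\|_2 \geq \epsilon > 0$ by Assumption A3) is convex; for the box case, $G$ is augmented by $\Pi_{[c,d]}(u)$, which is also convex. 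Since $u^{(k+1)}$ is an exact minimizer of $F+G$, the optimality condition supplies a subgradient $\xi \in \partial G(u^{(k+1)})$ with $-\xi = \nabla F(u^{(k+1)})$.

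Next I would quantify the strong convexity of $F$. Its Hessian is exactly $\lambda A^{T}A + \rho_2 \nabla^{T}\nabla$, which is symmetric positive semidefinite; Assumption A1 guarantees $\mathcal{N}(A)\cap\mathcal{N}(\nabla) = \{0\}$, so in finite dimension the kernel of this sum is trivial and it is in fact positive definite. Hence there exists $\sigma > 0$ with $\lambda A^{T}A + \rho_2 \nabla^{T}\nabla \succeq \sigma I$, which by \Cref{lemma:strong_convex}(a) means $F$ is $\sigma$-strongly convex.

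The descent inequality then follows from combining \Cref{lemma:strong_convex}(c) applied to $F$ with the convexity of $G$:
\begin{align*}
F(u^{(k)}) &\geq F(u^{(k+1)}) + \langle \nabla F(u^{(k+1)}), u^{(k)}-u^{(k+1)}\rangle + \tfrac{\sigma}{2}\|u^{(k)}-u^{(k+1)}\|_2^2,\\
G(u^{(k)}) &\geq G(u^{(k+1)}) + \langle \xi, u^{(k)}-u^{(k+1)}\rangle.
\end{align*}
Adding these two inequalities and invoking $\xi = -\nabla F(u^{(k+1)})$ cancels the inner-product terms, yielding exactly \eqref{ineq:lem43_Lu} with $\bar c_1 = \sigma$. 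For the $L_1/L_2$-box case, one simply replaces $G$ by $G + \Pi_{[c,d]}$; since $u^{(k+1)}$ is feasible by construction, both sides remain finite and the same subgradient-plus-strong-convexity chain goes through unchanged.

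The main obstacle is verifying the quantitative strong-convexity modulus: Assumption A1 is a nullspace condition and does not \emph{a priori} come with an explicit $\sigma$. In finite dimension a compactness argument delivers $\sigma>0$, but one must note that $\sigma$ depends on $\lambda$, $\rho_2$, and on the operators $A,\nabla$; this dependence on $\rho_2$ is benign since \Cref{lem:suff_decr} ultimately only requires $\rho_2$ large enough, and enlarging $\rho_2$ only increases $\sigma$ further, making the bookkeeping in the subsequent lemma consistent.
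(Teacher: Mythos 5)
Your proposal is correct and follows essentially the same route as the paper: both arguments rest on Assumption A1 forcing $\lambda A^{T}A+\rho_2\nabla^{T}\nabla$ to be positive definite, so that the $u$-marginal of the augmented Lagrangian is strongly convex and the descent inequality is the standard "value gap at a strongly convex minimizer" bound, with the indicator term absorbed by convexity in the box case. Your explicit split into a smooth strongly convex part $F$ and a convex nonsmooth part $G$ with the subgradient cancellation is just a slightly more careful writing of the paper's one-line application of \Cref{lemma:strong_convex}, and the resulting constants ($\sigma$ versus the paper's $\sigma\lambda$ under $\rho_2\ge\lambda$) are consistent.
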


	\begin{proof}

Denote $\sigma$ as the smallest eigenvalue of the matrix $A^T A+\nabla^T \nabla.$  We show $\sigma$ is strictly positive. If $\sigma=0,$ there exists a vector $x$ such that  $x^T(A^T A+\nabla^T \nabla) x = 0$. It is straightforward that $x^T A^T A x\ge0$ and $x^T \nabla^T \nabla x\ge0$. Therefore, one shall have $x^T A^T A x=0$ and $x^T \nabla^T \nabla x=0$, which contradicts with Assumption A1 that ${\cal N}(\nabla)\bigcap{\cal N}(A)={0}$. Therefore, we have that
\[
		v^T(A^T A + \nabla^T\nabla) v \geq  \sigma \|v\|_2^2, \quad \forall v,
		\]
which implies that $\mathcal L_{\rm uncon}(u, \h h^{(k)}; \h b_2^{(k)})$ with fixed $\h h^{(k)}$ and $\h b_2^{(k)}$ is strongly convex with parameter $\bar c_1 = \sigma \lambda$ (we can choose $\rho_2\geq \lambda$ as it is sufficiently large.) 
It follows from \eqref{eq:Lboxvsuncon} that the only difference between $\mathcal{L}_{\rm uncon}$ and  
$\mathcal{L}_{\rm box}$ is the indicator function $\Pi_{[c,d]}(u).$ 
Since the indicator function is convex, then $\mathcal{L}_{\rm box}$ is strongly convex with the same parameter $c_1$. We can unify $\mathcal{L}_{\rm uncon}$ and  
$\mathcal{L}_{\rm box}$ 		to be $\mathcal L.$
		 Then \Cref{lemma:strong_convex} leads to
		$$\mathcal{L}(u^{(k+1)}, \h h^{(k)}; \h b^{(k)}_2)\le \mathcal{L}(u^{(k)}, \h h^{(k)}; \h b_2^{(k)})-\frac{\sigma\lambda}{2}\|u^{(k+1)}-u^{(k)}\|^2_2.$$
		Therefore, we can choose $\bar c_1=\sigma \lambda$  such that the inequality \eqref{ineq:lem43_Lu} holds.
		\end{proof}		

	Now we are ready to prove for \Cref{lem:suff_decr}.
	\begin{proof}
		Denote $a=\|u^{(k+1)}\|_1$ and $L=\frac{2M}{\epsilon^3}$.  Lemma~\ref{lem4.4} and Lemma~\ref{lemma:lip} (c) lead to
		\begin{equation}\label{ineq:lem43_h}
		\frac a{\|\h h^{(k+1)}\|_2}\leq \frac a{\|\h h^{(k)}\|_2} -\left\langle \frac {a \h h^{(k)}}{\|\h h^{(k)}\|_2^3}, \h h^{(k+1)}-\h h^{(k)}\right\rangle + \frac{L}{2}\|\h h^{(k+1)}-\h h^{(k)}\|_2^2.
		\end{equation}
		Denoting $\h z = \nabla u^{(k+1)}+\h b_2^{(k)}$ and using the optimality condition of $\h h^{(k+1)}$ \eqref{eq:opt4hsub}, we get
		\begin{eqnarray}
		&&\frac {\rho_2} 2 \|\h h^{(k+1)}-\h z\|_2^2 - \frac {\rho_2} 2 \|\h h^{(k)}-\h z\|_2^2\nn\\
		&=& \frac {\rho_2} 2 \|\h h^{(k+1)}\|_2^2 - \frac {\rho_2} 2 \|\h h^{(k)}\|_2^2-\left\langle 	-\dfrac{a \h h^{(k+1)}}{\|\h h^{(k+1)}\|^3} + \rho_2\h h^{(k+1)}, \h h^{(k+1)}-\h h^{(k)}\right\rangle\nn\\
		& = & \left\langle 	\dfrac{a\h h^{(k+1)}}{\|\h h^{(k+1)}\|^3}, \h h^{(k+1)}-\h h^{(k)}\right\rangle -\frac {\rho_2}  2\|\h h^{(k+1)}-\h h^{(k)}\|_2^2\label{eq:lemma45h}.
		\end{eqnarray}
		Combining \eqref{ineq:lem43_h} and \eqref{eq:lemma45h}, we obtain
		\begin{eqnarray}
		\label{ineq:lem43_Lh}	&&\mathcal{L}(u^{(k+1)}, \h h^{(k+1)};\h b^{(k)}_2)- \mathcal{L}(u^{(k+1)}, \h h^{(k)};\h b_2^{(k)})\\
		&\leq& \left\langle 	\dfrac{a\h h^{(k+1)}}{\|\h h^{(k+1)}\|^3_2}-\dfrac{a\h h^{(k)}}{\|\h h^{(k)}\|^3_2}, \h h^{(k+1)}-\h h^{(k)}\right\rangle -\frac{\rho_2-L}{2}\|\h h^{(k+1)}-\h h^{(k)}\|_2^2\nn\\
		&\leq & \left\|\dfrac{a\h h^{(k+1)}}{\|\h h^{(k+1)}\|^3_2}-\dfrac{a\h h^{(k)}}{\|\h h^{(k)}\|^3_2}\right\|_2\left\|\h h^{(k+1)}-\h h^{(k)}\right\|_2	-\frac{\rho_2-L}{2}\left\|\h h^{(k+1)}-\h h^{(k)}\right\|_2^2\nn\\
		&\leq& -\frac{\rho_2-3L}{2}\|\h h^{(k+1)}-\h h^{(k)}\|_2^2.\nn
		\end{eqnarray}
		
		%
		

		Lastly, from the update of $\h b_2,$ we compute
		\begin{eqnarray}\label{ineq:lem43_Lb}
		&&\mathcal{L}(u^{(k+1)}, \h h^{(k+1)};\h b^{(k+1)}_2)- \mathcal{L}(u^{(k+1)}, \h h^{(k+1)};\h b_2^{(k)})\\
		&=&\frac{\rho_2}2 \Big(\|\h b_2^{(k)}\|_2^2-\|\h b_2^{(k+1)}-2\h b_2^{(k)}\|_2^2\Big)\leq \frac{\rho_2}2\|\h b_2^{(k+1)}-\h b_2^{(k)}\|^2_2.\nn
		\end{eqnarray}
		By putting the inequalities \eqref{ineq:lem43_Lu}, \eqref{ineq:lem43_Lh}, and \eqref{ineq:lem43_Lb} together with Lemma~\ref{lem43}, we have
		$$\mathcal{L}(u^{(k+1)}, \h h^{(k+1)};\h b^{(k+1)}_2)\le \mathcal{L}(u^{(k)}, \h h^{(k)};\h b_2^{(k)})-c_1\|u^{(k+1)}-u^{(k)}\|_2^2 -c_2\|\h h^{(k)}-\h h^{(k+1)}\|_2^2,$$
		where $c_1 = \frac{\bar c_1}2-\frac{16mn}{\rho_2\epsilon^4}$ and $c_2 =\frac{\rho_2\epsilon^3-6M}{2\epsilon^3}-\frac{16M^2}{\rho_2\epsilon^6}$.
		For sufficiently large $\rho_2,$  we can have $c_1,c_2>0.$
		%
	\end{proof}
	
	\begin{remark}
		It seems that we need a very large value of $\rho_2$ to guarantee $c_1,c_2>0$ in \Cref{lem:suff_decr}. Fortunately, it is just a sufficient condition for convergence and we can choose a reasonable value of $\rho_2$ in practice; please refer to \Cref{sec:experiments} for parameter tuning.
	\end{remark}

	\subsection{Proof of \Cref{thm:stantionary_con}}
	
	\begin{proof}
		
		To accommodate the models (with and without box),  we express the optimality condition of (\ref{ADMML1overL2all}) as follows,
		\begin{eqnarray}\label{con:equ:sym1}
		\left\{\begin{array}{l}
		\frac{p^{(k+1)}}{\|\h h^{(k)}\|_2}+q^{(k+1)}+r^{(k+1)}+\rho_2\nabla^T(\nabla u^{(k+1)}- \h h^{(k)}+ \h b_2^{(k)})=0\\[0.1cm]
		-\frac{\|\nabla u^{(k+1)}\|_1}{\|\h h^{(k+1)}\|_2^3}\h h^{(k+1)}  +\rho_2(\h h^{(k+1)}-\nabla u^{(k+1)}- \h b_2^{(k)})=0\\[0.1cm]
		\h b_2^{(k+1)}=\h b_2^{(k)}+\nabla u^{(k+1)} - \h h^{(k+1)},
		\end{array}\right.
		\end{eqnarray}
		where $p^{(k+1)}\in \partial \| \nabla u^{(k+1)}\|_1$, $q^{(k+1)}:=\lambda A^T(Au^{(k+1)}-f)$, and $r^{(k+1)}$ either belongs to $\partial(\Pi_{[c,d]}(u^{(k+1)}))$ with the box constraint or zero otherwise.
		Let $\eta_1^{(k+1)}, \eta_2^{(k+1)}, \eta_2^{(k+1)}$ be 
		\begin{eqnarray}\label{con:equ:sym2}
		\left\{\begin{array}{l}
		\eta_1^{(k+1)}:=  \frac{p^{(k+1)}}{\|\h h^{(k+1)}\|_2}+q^{(k+1)}+r^{(k+1)}+\rho_2\nabla^T(\nabla u^{(k+1)}-\h h^{(k+1)}+\h  b_2^{(k+1)})\\[0.1cm]
		\eta_2^{(k+1)} :=  -\frac{\|\nabla u^{(k+1)}\|_1}{\|\h h^{(k+1)}\|_2^3}\h h^{(k+1)}  +\rho_2(\h h^{(k+1)}-\nabla u^{(k+1)}-\h b_2^{(k+1)})\\[0.1cm]
		\eta_3^{(k+1)} :=  \rho_2(\nabla u^{(k+1)} - \h h^{(k+1)}).
		\end{array}\right.
		\end{eqnarray}
		Clearly, we have
		\begin{equation*}
			\begin{split}
				\eta_1^{(k+1)} & \in \partial_{u}{\cal L}(u^{(k+1)},\h h^{(k+1)},\h b_2^{(k+1)})\\
				\eta_2^{(k+1)} & \in \partial_{\h h}{\cal L}(u^{(k+1)},\h h^{(k+1)},\h b_2^{(k+1)})\\
				\eta_3^{(k+1)} & \in \partial_{\h b_2}{\cal L}(u^{(k+1)},\h h^{(k+1)},\h b_2^{(k+1)}),
			\end{split}
		\end{equation*}
		for $\mathcal L=\mathcal L_{\rm{uncon}}$ or  $\mathcal L_{\rm{box}}.$ 
		Combining \eqref{con:equ:sym1} and \eqref{con:equ:sym2} leads to
		\begin{eqnarray*}
			\left\{\begin{array}{l}
				\eta_1^{(k+1)}= -\frac{p^{(k+1)}}{\|\h h^{(k)}\|_2}+\frac{p^{(k+1)}}{\|\h h^{(k+1)}\|_2}+\rho_2\nabla^T (\h h^{(k)}-\h h^{(k+1)})+\rho_2\nabla^T(\h b_2^{(k+1)}-\h b_2^{(k)})\\[0.1cm]
				\eta_2^{(k+1)}= \rho_2(\h b_2^{(k)}-\h b_2^{(k+1)})\\[0.1cm]
				\eta_3^{(k+1)}=\rho_2(\h b_2^{(k+1)}-\h b_2^{(k)}).
			\end{array}\right.
		\end{eqnarray*}
		The chain rule of subgradient \cite{hiriart-lemarechal-1996} suggests that 	 
		$\partial \|\nabla u\|_1 = \nabla^T \h q $, where 
		\begin{equation*}
			\h q = \{\h q  \ | \ \langle\h q, \nabla u \rangle_Y = \|\nabla u\|_1, \ |q_{ijk}| \leq 1, \forall i,j,k   \}.
		\end{equation*} 	
		Therefore, we have an upper bound for $\|p^{(k+1)}\|_2\leq \|\nabla ^T\|_2\|\h q^{(k+1)}\|_2\leq 2\sqrt{2mn}$. Simple calculations show that
		\begin{eqnarray*}\label{ineq:lemma43}
			&&	 \left\|\frac{p^{(k+1)}}{\|\h h^{(k)}\|_2}-\frac{p^{(k+1)}}{\|\h h^{(k+1)}\|_2}\right\|_2 =\left|\frac{1}{\|\h h^{(k)}\|_2}-\frac{1}{\|\h h^{(k+1)}\|_2} \right| \left\|p^{(k+1)}\right\|_2\\
			&	 \leq& \frac{1}{\epsilon^2}\left\|\h h^{(k+1)}-\h h^{(k)}\right\|_2\left\|p^{(k+1)}\right\|_2\le  \frac{2\sqrt{2mn}}{\epsilon^2} \left\|\h h^{(k+1)}-\h h^{(k)}\right\|_2.
		\end{eqnarray*}
		Finally, by setting $\gamma = \max\{26 \rho^2, \ 24\rho^2+ \frac{24mn}{\epsilon^4}\}$,
		\eqref{eq2} follows immediately. 
	\end{proof}

	\bibliographystyle{siamplain}
	\bibliography{refer_l1dl2}
	
\end{document}